\newcommand{\marginlabel}[1]%
  {\mbox{}\marginpar{\raggedleft\hspace{0pt}\bfseries\sf#1}}
\def\NN{{\mathbb N}}
\def\CC{{\mathbb C}}
\def\PP{{\mathbf P}}
\def\OO{{\mathcal O}}
\def\F{\mathcal{F}}
\def\I{\mathcal{I}}
\def\E{\mathcal E}
\theoremstyle{plain}
\newtheorem*{introdeff}{Definition}
\newtheorem{thmalpha}{Theorem}
\newtheorem{coralpha}[thmalpha]{Corollary}
\newtheorem{thm}{Theorem}[section]
\newtheorem{prop}[thm]{Proposition}
\newtheorem{cor}[thm]{Corollary}
\newtheorem{lem}[thm]{Lemma}
\theoremstyle{definition}
\newtheorem{deff}[thm]{Definition}
\newtheorem{rem}[thm]{Remark}
\newtheorem{term/not}[thm]{Terminology/Notation}
\newtheorem{hyp}[thm]{Hypothesis}
\newcommand\coker{\operatorname{coker}}
\newcommand\Pico{\operatorname{Pic^0}}
\newcommand\codim{\operatorname{codim}}
\newcommand\Alb{\operatorname{Alb}}
\newcommand\supp{\operatorname{supp}}
\newcommand\Bs{\operatorname{Bs}}
\begin{document}

\title{On the bicanonical map of irregular varieties}

\author[M.A. Barja]{Miguel Angel Barja}
\address{Departament de Matem\`atica Aplicada I, Universitat Polit\`ecnica de Catalunya,
ETSEIB Avda. Diagonal 647, 08028 Barcelona, Spain
 } \email{{\tt miguel.angel.barja@upc.edu
}}

\author[M. Lahoz]{Mart\'{\i} Lahoz}
\address{Departament d'\`Algebra i Geometria, Facultat de Matem\`atiques, Universitat
de Barcelona, Gran Via, 585, 08007 Barcelona, Spain}  \email{{\tt marti.lahoz@ub.edu
}}

\author[J. C. Naranjo]{\\Juan Carlos Naranjo}
\address{Departament d'\`Algebra i Geometria, Facultat de Matem\`atiques, Universitat
de Barcelona, Gran Via, 585, 08007 Barcelona, Spain}  \email{{\tt jcnaranjo@ub.edu
}}

\author[G. Pareschi]{Giuseppe Pareschi}
\address{Dipartimento di Matematica, Universit\`a di Roma, Tor Vergata, V.le della
Ricerca Scientifica, I-00133 Roma, Italy} \email{{\tt
pareschi@mat.uniroma2.it}}

\thanks{This work was completed while GP was visiting member of MSRI during the Program ``Algebraic Geometry'' (April 2009) and JCN was visiting the University of Pavia (Spring 2009, granted by the Spanish Ministerio de Ciencia e Innovaci\'on), and MAB was visiting the Centre de Recerca Matem\`atica.
GP thanks MSRI for great hospitality and excellent research atmosphere. JCN thanks the University of Pavia for his warm hospitality. MAB, ML and JCN were
partially supported by the Proyecto de Investigaci\'on MTM2006-14234. MAB and ML were also partially supported by 2005SGR-557 and JCN was partially supported by 2005SGR-787.
}

\maketitle

\setlength{\parskip}{.1 in}

\markboth{M.A. BARJA, M. LAHOZ, J.C. NARANJO, G. PARESCHI} {\bf On the bicanonical map of irregular varieties}

\begin{abstract} From the point of view of uniform bounds for the birationality of pluricanonical maps, irregular varieties of general type and maximal Albanese dimension behave similarly to curves. In fact Chen-Hacon showed that, at least  when their holomorphic Euler characteristic is positive, the tricanonical map of such varieties is always birational. In this paper we study the bicanonical map. We consider the natural subclass of varieties of maximal Albanese dimension formed by primitive varieties of Albanese general type. We prove that the only such varieties with non-birational bicanonical map  are the natural higher-dimensional generalization to this context of curves of genus $2$: varieties birationally equivalent to the theta-divisor of an indecomposable principally polarized abelian variety. The proof is based on the  (generalized) Fourier-Mukai transform.
\end{abstract}

\section{Introduction}

Pluricanonical maps are an essential tool for  understanding  varieties of general type. In particular, given a variety of general type $X$, it is important to know, or at least bound, the minimal integer $m_0(X)$ such that the  pluricanonical maps
$$\phi_m: X\dashrightarrow \PP(H^0(X,\omega_X^m)^*)$$
are birational onto their image for each $m\ge m_0(X)$.
In this paper we will  deal with  this sort of problems for complex \emph{irregular} varieties (i.e. varieties such that $q(X):=h^1(\OO_X)>0$) of general type, mostly of  \emph{maximal Albanese dimension} (\emph{m.A.d.} for short), i.e. such that  their Albanese map $alb:X\rightarrow \Alb X$ is generically finite. In a sense these are the most basic irregular varieties since the Stein factorization of the Albanese map provides a canonical fibration onto a normal variety of the same irregularity, whose smooth models have m.A.d.. From the point of view of the birationality of pluricanonical maps, Chen-Hacon (\cite{chen-hacon}) showed that
m.A.d. varieties of general type and $\chi(\omega_X)>0$ behave like curves: their tricanonical map  is always birational\footnote{
 m.A.d. varieties of general type with $\chi(\omega_X)=0$ were discovered by Ein-Lazarsfeld in \cite{el} and do not exist in dimension $\le 2$. However one knows, again by \cite{chen-hacon}, that their 6-canonical map is birational.}.
Such result have been made more precise in \cite{pp5}, where it is proved that if $X$ is \emph{non-special} (see below) then the tricanonical map is an embedding outside the exceptional locus of the Albanese map. It is worth to note that ample line bundles on abelian varieties have the same behavior, since the third power of an ample line bundle is always very ample. In fact the analogy has been explained in \cite{pp3} and references therein, where it is showed that the two facts can be proved in the same way.

 In this paper we address  the next natural problem: \emph{which are the m.A.d. varieties of general type
  whose  bicanonical map is not birational? }
    To describe our view of the question, some preliminary remarks about irregular varieties are in order. According to the seminal work of Green-Lazarsfeld (\cite{gl1,gl2}) and Ein-Lazarsfeld (\cite{el}),  key  invariants of irregular varieties are their \emph{cohomological support loci}
  $$V^i(\omega_X)=\{\alpha\in\Pico X\>|\> h^i(\omega_X\otimes\alpha)>0\}$$
  These are intimately related to the geometry of $X$ since, by the main theorem of \cite{gl2}, the positive dimensional components of $V^i(\omega_X)$  are translates of subtori arising, for $i>0$, only in presence of a morphism with connected fibres to a lower dimensional m.A.d. normal variety. It follows that, up to birational equivalence,  a variety such that $\dim V^i(\omega_X)>0$ for some $i>0$ has a morphism onto a smooth lower-dimensional m.A.d. variety $Y$ such that $\dim V^i(\omega_Y)=0$ for all $i>0$. This is the reason of the following
  \begin{introdeff}[Catanese, {\cite[Def. 1.24]{catanese}}] A smooth projective irregular variety such that $\dim V^i(\omega_X)=0$ for all $i>0$ is called primitive.
  \end{introdeff}
Moreover in a recent paper  Pareschi-Popa introduced a single numerical invariant measuring the \emph{codimension} of the various support loci, namely the \emph{generic vanishing index}
$$gv(X):=\min_{i>0}\{\codim _{\Pico X}V^i(\omega_X)-i\}.$$
In this terminology Green-Lazarsfeld's Generic Vanishing Theorem of \cite{gl1} can be rephrased as follows: \emph{ if $X$ has m.A.d. then $gv(\omega_X)\ge 0$. } The main feature of the invariant $gv(X)$ is that it  governs the local sheaf-theoretic properties of the Fourier-Mukai transform of the structure sheaf (\cite{pp6} Def.3.1 and Cor.3.2).
A variety verifying the extremal case $gv(X)=0$ has quite special properties. From the Fourier-Mukai point of view, $gv(X)=0$ means that the  transform of the structure sheaf  of $X$ has torsion (see Theorem \ref{GV1} below). From the geometric side,  it means that the image  $X$ via the Albanese map is fibered by subtori of the Albanese variety (\cite{el}, Proof of Theorem 3).
 For these reasons, m.A.d. varieties with $gv(\omega_X)=0$ will be referred to as \emph{special}.

  Consequently, we  are lead to divide m.A.d. varieties  into four disjoint subclasses: the \emph{non-special} varieties are further distinguished into \emph{primitive} and \emph{non-primitive} ones and similarly for the \emph{special} varieties. An immediate computation shows that for \emph{primitive} varieties, non-special (resp. special) mean simply $\dim X<q(X)$ (resp. $\dim X=q(X)$, i.e. the Albanese map is surjective). We recall that in the literature there is a specific definition also for m.A.d. varieties with $\dim X<q(X)$: they are called \emph{of Albanese general type} (Catanese, \cite[Def. 1.7]{catanese}). Therefore \emph{primitive non-special} is equivalent to \emph{primitive of Albanese general type}.

  Let us go back to the problem of describing  m.A.d. varieties of general type such that their bicanonical map is not surjective.
In dimension $1$  these are the curves of genus $2$. The problem of classifying  surfaces of general type whose bicanonical map is not birational has attracted considerable interest, starting from Du Val, later on Bombieri and more recently, Catanese, Ciliberto, Francia, Mendes Lopes, Pardini, Xiao Gang and others. We refer to the surveys \cite{ciliberto} and (more recent) \cite{bcpi} for an account on this work. Although the general classification is still not fully achieved, things are much better behaved for surfaces of maximal Albanese dimension. In fact, from Theorems 8, 9 and 10 of \cite{bcpi}
 one extracts the following result, due to the combined efforts of Catanese, Ciliberto and Mendes Lopes (\cite{CCiMe,CiMe}): minimal m.A.d. surfaces of general type  whose bicanonical map is not surjective are either fibered by curves of genus $2$    (this  is usually  referred to as \emph{the standard case}) or\\
  ($a$) the symmetric product of a curve of genus $3$;\\
  ($b$) the double \smallskip cover of  principally polarized abelian surface branched at a divisor $D\in |2\Theta|$.\\
Interestingly enough, the analogy with curves persists, as both cases ($a$) and ($b$) can be seen as 2-dimensional generalizations of curves of genus $2$: \\
  ($a$) curves of genus $2$ are the theta-divisors of indecomposable principally polarized abelian surfaces. Symmetric products of curves of genus $3$ are precisely the minimal surfaces which are birational to theta divisors of indecomposable principally polarized 3-folds. This generalizes to arbitrary dimension: the bicanonical map of a variety which is birational to a theta-divisor of an indecomposable p.p.a.v. has degree $2$, as it factors (up to birational equivalence) through the Kummer map.\\
 ($b$) A surface as in  ($b$) is the 2-dimensional analogue of the double cover of an elliptic curve branched at two points, i.e.  a bielliptic curves of genus $2$. Again, in arbitrary dimension a variety $X$ birational to the double cover of p.p.a.v $A$, branched at a (say smooth, for simplicity) divisor $B\in|2\Theta|$ has degree $2$, as it factors (up to birational equivalence) through the lifting to $X$ of the natural involution of  $A$.\smallskip\\
An immediate computation shows that the varieties described in ($a$) and ($b$) above are \emph{primitive} (more precisely, $V^i(\omega_X)=\{\hat 0\}$ for $i>0$, where $\hat 0$ denotes the identity point of $\Pico X$). The ones of ($a$) are \emph{non-special} while the ones of ($b$) are \emph{special}. As for curves, in both cases their holomorphic Euler characteristic is $\chi(\omega_X)=1$, the minimal value\footnote{ m.A.d. varieties of general type satisfy the sharp lower bound  $\chi(\omega_X)\ge 0$ (\cite{el}), but if $X$ is, in addition, \emph{primitive} then $\chi(\omega_X)\ge 1$ (see Proposition \ref{lazarsfeld} below for a more general statement).}. Concerning the standard case, m.A.d surfaces fibered in curves of genus $2$ are \emph{non-primitive}.  Products of  curves, $C\times D$, such that $g(C)\ge g(D)=2$   are \emph{non-special} examples. The surface of  \cite{CCiMe} Th.3.23 is an example of a \emph{special} m.A.d. surface presenting the standard case. For m.A.d varieties of arbitrary dimension, the natural  generalization of the notion of \emph{standard case} is a variety fibered by \emph{primitive} varieties whose bicanonical map is not surjective.

In this paper we will focus on  non-special primitive varieties. The somewhat surprising result is that, in arbitrary dimension,  the picture is the same:
   \begin{thmalpha}\label{A} Let $X$ be a smooth complex variety of maximal Albanese dimension and of  general type. Assume  moreover that $X$ is primitive and  that $\dim X< q(X)$.  The following are equivalent\\
  (a) the bicanonical map of $X$ is non-birational,\\
  (b)   $X$ is birationally equivalent to a theta-divisor of an indecomposable p.p.a.v..
  \end{thmalpha}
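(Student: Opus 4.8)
The plan is to prove the equivalence of (a) and (b) in Theorem A. The direction (b)$\Rightarrow$(a) is essentially remarked in the introduction: if $X$ is birational to a theta-divisor $\Theta$ of an indecomposable principally polarized abelian variety $(A,\Theta)$, then $\omega_X$ corresponds to $\OO_\Theta(\Theta)$, and adjunction on $A$ gives $\omega_\Theta \cong \OO_\Theta(\Theta)$, so $\omega_\Theta^2 \cong \OO_\Theta(2\Theta)$. The linear system $|2\Theta|$ on $A$ is invariant under the involution $(-1)$ and factors through the Kummer map, so the bicanonical map of $\Theta$ has degree $2$ onto its image and is non-birational. The serious content is the converse (a)$\Rightarrow$(b), which I now sketch.

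The strategy I would follow is to translate non-birationality of the bicanonical map into a statement about the failure of $2$-jet or separation-of-points properties of $\omega_X^2$, and then to exploit the Fourier-Mukai transform (FM) of $\OO_X$ together with the hypotheses (primitive, $\dim X < q(X)$) to force strong numerical constraints. First I would record that since $X$ is primitive the support loci $V^i(\omega_X)$ are finite for $i>0$, and since $X$ is non-special we have $gv(X)>0$, so the transform $R\Delta$ of $\OO_X$ (equivalently of $\omega_X$) is, up to the appropriate Grothendieck duality, a sheaf supported in the right codimension; in particular Proposition \ref{lazarsfeld}-type bounds apply and I expect $\chi(\omega_X)\ge 1$, with the extremal relationship between $\chi$ and the geometry becoming the key rigidity. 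The idea is that non-birationality of $\phi_2$ means there exist two general points $x_1,x_2$ (or an infinitely near pair) not separated by $H^0(\omega_X^2)$; I would write $\omega_X^2 \cong \omega_X\otimes\omega_X$ and use the continuous evaluation/multiplication of global sections twisted by $\alpha\in\Pico X$, passing to the universal cover or to $\Alb X$ via the Albanese map, to convert the separation problem into a statement about the multiplication map $H^0(\omega_X\otimes\alpha)\otimes H^0(\omega_X\otimes\alpha^{-1})\to H^0(\omega_X^2)$ and its failure to be surjective for all $\alpha$.

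The technical heart, which is also the main obstacle, is to upgrade the vanishing/continuous-global-generation machinery so as to extract from non-birationality of $\phi_2$ that $\chi(\omega_X)=1$ and that the Albanese image is, in a strong sense, minimal. Concretely, I would argue via the theory of $M$-regularity and the skew-symmetric multiplication maps: if $\phi_2$ is not birational, then for every $\alpha$ the twisted bicanonical sections fail to separate the chosen pair, which by a parameter-count on $\Pico X$ (using $\dim V^i(\omega_X)=0$) forces the transform $\widehat{\OO_X}$ to be a rank-one object, i.e. the FM dual encodes a single effective divisor. Once $\chi(\omega_X)=1$ is pinned down, the generic vanishing package together with the positivity $gv(X)>0$ should identify the generically finite Albanese map $alb\colon X\to \Alb X$ with a finite map of degree equal to $\chi(\omega_X)=1$ onto its image, forcing $alb$ to be birational onto its image $W$; primitivity then shows $W$ generates a principally polarized abelian variety with $[W]=\Theta$, and indecomposability follows from $X$ being irreducible of general type with the computed Hodge numbers. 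I expect the delicate point to be ruling out the ``double-cover'' (special) and fibered (non-primitive) configurations purely from the hypotheses $\dim X<q(X)$ and primitivity --- i.e. showing that under these assumptions the only numerically consistent solution to the FM/multiplication constraints is the theta-divisor itself.

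Assembling these, the logical skeleton is: (1) reduce (a) to failure of a multiplication map of twisted pluricanonical sections; (2) use primitivity to make all $V^i$ finite and non-speciality to get $gv(X)>0$, hence a clean FM transform; (3) deduce $\chi(\omega_X)=1$ from the non-separation hypothesis by an $M$-regularity/continuous-generation argument; (4) conclude that $alb$ is birational onto a theta-divisor of an indecomposable p.p.a.v., and verify the degree-$2$ factorization through the Kummer map gives back case (b). The most resistant step is (3), the quantitative passage from non-birationality of $\phi_2$ to the extremal value $\chi(\omega_X)=1$, since it requires controlling the multiplication maps uniformly over all of $\Pico X$ rather than at a single character.
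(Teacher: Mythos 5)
Your overall skeleton --- reduce non-birationality to a continuous-evaluation/multiplication statement, use primitivity and $\dim X<q(X)$ to get a clean Fourier--Mukai transform of $\OO_X$, extract $\chi(\omega_X)=1$, and finish with the cohomological characterization of theta-divisors (Proposition \ref{theta1}) --- is the same as the paper's, and your (b)$\Rightarrow$(a) direction is essentially correct (modulo the need to invoke that $\Theta$ is normal with canonical singularities, via Ein--Lazarsfeld and Koll\'ar, in order to identify $H^0(X,\omega_X^2)$ with $H^0(\Theta,\omega_\Theta^2)$ on a desingularization). But the step you yourself flag as ``most resistant'' --- the passage from non-birationality of $\phi_2$ to $\chi(\omega_X)=1$ --- is exactly where your proposal contains no actual argument. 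A ``parameter count on $\Pico X$'' forcing $\widehat{\OO_X}$ to be of rank one does not work: what the CGG machinery yields from non-birationality is only that for general $\alpha$ the linear series $|\omega_X\otimes\alpha|$ has a base \emph{divisor} $F_\alpha$, with these divisors covering $X$ (Theorem \ref{birprov} and Corollary \ref{birdef}); this says nothing directly about the rank $\chi(\omega_X)$ of $\widehat{\OO_X}$.

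The missing content is the geometric analysis of how $F_\alpha$ varies with $\alpha$, which occupies all of \S5 of the paper: one shows that $\alpha\mapsto\OO_X(F_\alpha-F_{\alpha_0})$ induces an idempotent endomorphism $f$ of $\Pico X$ (Lemma \ref{decomp}); that $f$ must be injective --- otherwise the composition of the Albanese map with the projection onto the factor dual to $\ker f$ would again contradict the birationality criterion (Lemma \ref{zero}) --- so that $|\omega_X\otimes\alpha|=|M|+F_\alpha$ with a \emph{constant} mobile part $M$ and $\chi(\omega_X)=h^0(M)$; one then writes the Poincar\'e bundle explicitly in terms of the universal base divisor $\overline{\mathcal Y}$ (Lemma \ref{poincare'}) and transforms the resulting short exact sequence to embed $(-1)^*_{\Pico X}\widehat{\OO_X}$ into $\OO_{\Pico X}({\mathcal D}_{\bar p})^{\oplus \chi(\omega_X)}$ with cokernel a finite-length sheaf of length $\chi(\omega_X)$, which duality together with Proposition \ref{appendix} forces to be $\CC(\hat 0)$, whence $\chi(\omega_X)=1$. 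Without a substitute for this chain of arguments your step (3) is a gap, not a proof. A smaller inaccuracy: in your step (4) the degree of the Albanese map is not identified with $\chi(\omega_X)$; the birationality of $alb$ onto a theta-divisor comes from the Fourier--Mukai analysis showing $\widehat{\OO_X}=\I_{\hat 0}\otimes L$ with $L$ a principal polarization (Proposition \ref{theta1}), not from a degree count.
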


 Observe that the hypothesis of being primitive and  that $\dim X<q(X)$ could also be phrased as being primitive and non-special or primitive and of Albanese general type.

As a consequence of the  above quoted main theorem of \cite{gl2}, it follows
  \begin{coralpha}\label{B}
  Let $X$ be a smooth complex variety of general type with $\dim X<q(X)$. If   the bicanonical map of $X$ is non-birational then either $X$ has a morphism  onto a lower dimensional irregular normal variety  or $X$ is birational to the theta-divisor of an indecomposable p.p.a.v..
  \end{coralpha}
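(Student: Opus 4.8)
The plan is to deduce the corollary from Theorem~\ref{A} by a case analysis according to whether $X$ is of maximal Albanese dimension and whether it is primitive, using the structure theorem of Green--Lazarsfeld (\cite{gl2}) to dispose of the two remaining cases. The cleanest formulation is the contrapositive: assuming that $X$ admits \emph{no} morphism onto a lower-dimensional irregular normal variety, I would show that $X$ is necessarily both of maximal Albanese dimension and primitive. Together with the standing hypotheses that $X$ is of general type and $\dim X<q(X)$, these are precisely the assumptions of Theorem~\ref{A}, whose conclusion then forces the second alternative whenever the bicanonical map fails to be birational.

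First I would treat the case in which $X$ is not of maximal Albanese dimension. Then the Albanese map $a\colon X\to\Alb X$ has image $Z=a(X)$ with $\dim Z<\dim X$; since $q(X)>\dim X\ge 1$, the subvariety $Z$ generates $\Alb X$ and in particular $\dim Z\ge 1$. Taking the Stein factorization $X\xrightarrow{f}Y\to Z$ of the induced map $X\to Z$, the morphism $f$ has connected fibres onto a normal variety $Y$ with $1\le\dim Y=\dim Z<\dim X$. It then remains to verify that $Y$ is irregular: because the finite image of $Y$ in $\Alb X$ is $Z$, which generates $\Alb X$, the Albanese map of a smooth model of $Y$ surjects onto $\Alb X$, whence $q(Y)\ge q(X)>0$. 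Thus $f$ already exhibits the first alternative, and this case is settled independently of the bicanonical hypothesis.

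Next, assuming $X$ is of maximal Albanese dimension but not primitive, i.e.\ $\dim V^i(\omega_X)>0$ for some $i>0$, I would invoke the main theorem of \cite{gl2} exactly as recalled in the Introduction: up to birational equivalence $X$ carries a morphism with connected fibres onto a smooth lower-dimensional variety $Y$ of maximal Albanese dimension. Being of maximal Albanese dimension and of positive dimension, such a $Y$ has $q(Y)\ge\dim Y\ge 1$ and is therefore irregular, so the first alternative holds once more. Having eliminated both cases, the only remaining possibility is that $X$ is simultaneously of maximal Albanese dimension and primitive, and then Theorem~\ref{A} applies: the non-birationality of the bicanonical map yields that $X$ is birationally equivalent to a theta-divisor of an indecomposable p.p.a.v..

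The mathematical substance is packaged entirely in Theorem~\ref{A} and in \cite{gl2}; the corollary itself is only a reduction, so the sole points requiring genuine care are the verifications that the fibrations produced in the two reductions land on varieties that are at once \emph{lower-dimensional} and \emph{irregular}. I expect the irregularity in the non-maximal-Albanese-dimension case to be the main (though minor) obstacle, since it is not immediate from the mere existence of a lower-dimensional Albanese image and relies on the fact that that image generates $\Alb X$; the remainder is bookkeeping.
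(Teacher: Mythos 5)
Your proposal is correct and is essentially the paper's own (implicit) argument: the paper derives Corollary~\ref{B} from Theorem~\ref{A} precisely by the reduction you describe, using the Stein factorization of the Albanese map when $X$ is not of maximal Albanese dimension and the main theorem of \cite{gl2} when $X$ is not primitive, in both cases producing a morphism onto a lower-dimensional irregular normal variety. The only point to phrase carefully is that in the non-primitive case one should use the version of \cite{gl2} giving a morphism from $X$ itself onto a normal m.A.d.\ variety (rather than from a birational model onto a smooth one), but this is exactly how the paper quotes it.
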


   Theorem \ref{A} leaves open the classification of m.A.d varieties of general type  with non-birational bicanonical map of the other three types. We conjecture  that: \\
   - if \emph{primitive and special} such varieties  should have $\chi(\omega_X)=1$ (but we don't have a clear idea about the possibility of other examples besides ($b$) above, and what they should be); \\
    - the  \emph{non-primitive} ones should all present the \emph{standard case} (see the above discussion). Among these
  the  \emph{non-special} ones should be birational to the product of  a non-special m.A.d. varieties and a theta-divisor.

  The method of proof of Theorem \ref{A} is different from the one of \cite{CCiMe} and \cite{CiMe}, even if restricted to the $2$-dimensional case. The basic framework is the (generalized) Fourier-Mukai transform  and its relation with generic vanishing (the necessary background material is reviewed in \S2). In particular,   the equivalence
  $$ gv(\mathcal{F})\ge 1\quad\Leftrightarrow \quad\widehat{R\Delta\mathcal{F}}\ \hbox{ torsion-free} $$
  (\cite{pp5,pp6}, see also Theorem \ref{GV1} below) is repeatedly used.

   In \S3 we prove a slight improvement of Hacon-Pardini's cohomological characterization of theta-divisors \cite[Thm. 2]{hp}. The result is Proposition \ref{theta1} below, stating that: \emph{primitive non-special varieties with $\chi(\omega_X)=1$ are birational to theta-divisors}\footnote{This result was proved independently (with a different proof) by Lazarsfeld-Popa \cite{lp}, building on the ideas of \cite{hp}.}. Interestingly, a slightly weaker version of the above result holds in any characteristic (we refer to Corollary \ref{charp} for the precise statement).

   In \S4, via the notion of \emph{continuous global generation} (\cite{pp1,pp3} and references therein), we prove a birationality criterion asserting, roughly speaking,   that \emph{the non-birationality of the bicanonical map implies that, for general $\alpha\in Pic^0X$, the linear series $|\omega_X\otimes\alpha|$ has a base divisor} (we refer to Theorem \ref{birprov} and Corollary \ref{birdef} for the precise statements). This works under hypotheses which are more general than those of Theorem \ref{A}. For example, it works also for primitive special varieties. Moreover, it is worth to note that the above mentioned analogy with ample line bundles on abelian varieties persists. In fact our birationality criterion, and its proof, are similar to Ohbuchi's theorem, asserting that twice an ample line bundle $L$ on an abelian variety is ample unless $L$ has a base divisor, as proved in \cite{pp2}.

  The above birationality criterion is used in  \S5 where, by means of a geometric analysis of the paracanonical system, combined with the Fourier-Mukai transform, we show that a variety $X$ satisfying ($a$) of Theorem \ref{A} has $\chi(\omega_X)=1$. At this point  Theorem \ref{A} is proved by applying  the above mentioned Hacon-Pardini's type characterization of theta-divisors.

  Finally, in the Appendix we provide the proof of an useful technical fact about Fourier-Mukai transform  that we couldn't find in the literature. It is our hope that some of the steps of the argument (namely, the birationality criterion of \S4, the decomposition of the Picard torus and  the explicit description of the Poincar\'e line bundle of \S5,  the result of the Appendix) will be of independent interest.

  Unless otherwise stated, throughout the paper the word \emph{variety} will mean \emph{smooth projective complex variety }(but the careful reader will notice that at most steps of the arguments are completely algebraic, and work in any characteristic). Given a line bundle $\alpha$ on a variety $X$, by abuse of language we will also denote $\alpha$ the point of the Picard variety of $X$ parametrizing $\alpha$.

  \noindent{\bf Acknowledgments. } We thank Mihnea Popa for numerous conversations. In fact his contribution in shaping  many tools used in this paper doesn't need to be acknowledged. More recently a question of his lead us to discover a mistaken argument in a previous version of this work. We thank also Rob Lazarsfeld and Roberto Pignatelli.

\section{(Generalized) Fourier-Mukai transform} Here we review  from \cite{mukai} and \cite{pp4,pp5, pp6} the material about (generalized) Fourier-Mukai transforms and generic vanishing that will be needed  in the sequel.

\subsection{(Generalized) Fourier-Mukai transforms and generic vanishing. }
\begin{term/not}[Fourier-Mukai and generic vanishing] \label{FMetc} Let $X$ be a  variety of dimension $d$, equipped with a morphism to a $n$-dimensional abelian variety $$a:X\rightarrow A.$$
Let ${\mathcal P}$ be a Poincar\'e line bundle on $A\times {\rm Pic}^0A $. We will denote
$$P_a=(a\times {\rm id}_{{\rm Pic}^0\,A})^*({\mathcal P})$$
When $a=alb$, the Albanese map of $X$,  then the map $alb^*$ identifies ${\rm Pic}^0({\rm Alb} X)$   to ${\rm Pic}^0X$ and the line bundle $P_{alb}$ is identified to the Poincar\'e line bundle of $X$. We will denote  $P=P_{alb}$.
Letting $p$ and $q$ the two projections of $X\times {\rm Pic}^0 A$,
 we consider the left-exact functor
 $$\Phi_{P_a}(\F)=q_*(p^*(\F)\otimes P_a)$$ and its derived functor
$${ R}\Phi_{P_a}:{\bf D}(X)\rightarrow {\bf D}({\rm Pic}^0A).$$
Sometimes we will have to consider  the analogous derived functor
${ R}\Phi_{P_a^\vee}:{\bf D}(X)\rightarrow {\bf D}({\rm Pic}^0A)$ as well. Since ${\mathcal P}^{-1}\cong (1_A\times (-1)_{\Pico A})^*{\mathcal P}$, we have that
$$  R\Phi_{P_a^\vee}= (-1_{\Pico A})^*R\Phi_{P_a}$$
Finally, given a coherent sheaf $\F$ on $X$, its \emph{i-th cohomological support locus  with respect to $a$} is
$$V^i_a(\F)=\{\alpha \in {\rm Pic}^0A\>|\>h^i(\F\otimes a^*\alpha)>0\}$$
Again, when $a$ is the Albanese map of $X$, we will omit the subscript, simply writing
\ $V^i(\F)$.
A natural measure of the size of the full package of the $V^i_a(\F)$'s is provided by the \emph{generic vanishing index} of $\F$ (with respect to $a$) (see \cite[Def. 3.1]{pp6})
$$gv_a(\F):=\min_{i>0}\{\codim _{\Pico A}V^i_a(\omega_X)-i\}.$$
\end{term/not}

\bigskip A first basic result relates the generic vanishing index  to the fact that the F-M transform of its Grothendieck dual is a sheaf (in cohomological degree $d$).
In what follows, we will adopt the following notation for the dualizing functor
$$R\Delta(\F)=R\,{\mathcal Hom}(\F,\omega_X).$$

\begin{thm}[{\cite[Thm. A]{pp4},\cite[Thm. 2.2]{pp6}}]\label{GV} The following are equivalent\\
(a) $gv_a(\F)\ge 0$;\\
(b) $R^i\Phi_{P_a}(R\Delta\F)=0$ for all $i\ne d$.
\end{thm}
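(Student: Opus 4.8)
The plan is to reduce the equivalence to a single duality identity for the integral functor, turning condition (b) into a concentration statement for $E:=R\Phi_{P_a}\F$, and then to translate that concentration into the codimension bounds defining $gv_a$.

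First I would establish the exchange formula
$$R\mathcal{Hom}(R\Phi_{P_a}\F,\OO_{\Pico A})\cong (-1_{\Pico A})^*R\Phi_{P_a}(R\Delta\F)\,[d].$$
This is Grothendieck duality for the proper projection $q\colon X\times\Pico A\to\Pico A$: since the kernel $P_a$ is a line bundle and $p$ is flat, the relative-duality formula gives $R\mathcal{Hom}(R\Phi_{P_a}\F,\OO)\cong R\Phi_{P_a^\vee}(R\Delta\F)[d]$ once one uses $\omega_{\Pico A}\cong\OO_{\Pico A}$ (so the dualizing complex of $\Pico A$ is $\OO[n]$) and $\omega^\bullet_{X\times\Pico A}\cong p^*\omega_X[d+n]$; the already recorded identity $R\Phi_{P_a^\vee}=(-1_{\Pico A})^*R\Phi_{P_a}$ then yields the displayed form. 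Since $E\in\mathbf D^{[0,d]}_{\mathrm{coh}}(\Pico A)$ (the kernel being a sheaf and the fibres of $q$ having dimension $d$), this identity says that (b) — that $R\Phi_{P_a}(R\Delta\F)$ be concentrated in degree $d$ — is equivalent to $R\mathcal{Hom}(E,\OO_{\Pico A})$ being a sheaf in degree $0$.

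Next I would observe that the ``upper half'' of (b) is automatic. For any coherent $\F$ and any $\alpha\in\Pico A$, Serre duality on $X$ gives $\mathbb H^i(X,R\Delta\F\otimes a^*\alpha)\cong H^{d-i}(X,\F\otimes a^*\alpha^{-1})^\vee$, which vanishes for $i>d$; since the top nonvanishing direct image is detected by base change, this forces $R^i\Phi_{P_a}(R\Delta\F)=0$ for all $i>d$ unconditionally, i.e.\ $R\mathcal{Hom}(E,\OO)\in\mathbf D^{\le 0}$. Hence (b) is equivalent to the remaining lower vanishing, namely to $R\mathcal{Hom}(E,\OO_{\Pico A})\in\mathbf D^{\ge 0}$. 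The heart of the matter is then the homological lemma: for $E\in\mathbf D^b_{\mathrm{coh}}(\Pico A)$ on the smooth $n$-fold $\Pico A$, one has $R\mathcal{Hom}(E,\OO)\in\mathbf D^{\ge0}$ if and only if $\codim\supp\mathcal H^i(E)\ge i$ for every $i$. I would prove ``if'' from the spectral sequence $\mathcal{E}xt^p(\mathcal H^{-q}(E),\OO)\Rightarrow\mathcal H^{p+q}(R\mathcal{Hom}(E,\OO))$ together with $\mathcal{E}xt^p(\mathcal G,\OO)=0$ for $p<\codim\supp\mathcal G$; and ``only if'' from biduality $E\cong R\mathcal{Hom}(R\mathcal{Hom}(E,\OO),\OO)$ together with the dual estimate $\codim\supp\mathcal{E}xt^p(\mathcal G,\OO)\ge p$.

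Finally I would connect this to (a) by base change: $\supp\mathcal H^i(E)=\supp R^i\Phi_{P_a}\F$ agrees with $V^i_a(\F)$ away from loci of smaller dimension, so $\codim\supp\mathcal H^i(E)\ge i$ for all $i$ (the case $i=0$ being vacuous) is exactly $\codim V^i_a(\F)\ge i$ for all $i>0$, i.e.\ $gv_a(\F)\ge0$. Chaining the equivalences gives (a)$\Leftrightarrow$(b). I expect the main obstacle to be the first step: getting the shifts and twists in Grothendieck duality for $R\Phi_{P_a}$ exactly right (the bookkeeping of dualizing complexes on $X$, $X\times\Pico A$, and $\Pico A$, and the reduction to a line-bundle kernel). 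A secondary delicate point is that the two-sided conclusion of the homological lemma closes up only because of the automatic upper vanishing, and that the base-change identification of $\supp R^i\Phi_{P_a}\F$ with $V^i_a(\F)$ must be made at the generic points of the components of $V^i_a(\F)$ rather than pointwise.
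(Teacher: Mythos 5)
The paper states this theorem without proof, quoting it from [PP4, Thm.~A] and [PP6, Thm.~2.2], so there is no internal proof to compare against; your argument is correct and is essentially the argument of those references. Namely: Grothendieck duality for $q$ exchanges $R\Phi_{P_a}(R\Delta\F)[d]$ with $R{\mathcal H}om(R\Phi_{P_a}\F,\OO_{\Pico A})$ up to $(-1_{\Pico A})^*$, the vanishing for $i>d$ is automatic by Serre duality on the fibres plus base change, the remaining concentration is the standard homological characterization of $\codim \supp {\mathcal H}^i(R\Phi_{P_a}\F)\ge i$ via the ${\mathcal E}xt$ codimension bounds and biduality, and the two-sided inclusions $\supp R^i\Phi_{P_a}\F\subseteq V^i_a(\F)\subseteq\bigcup_{j\ge i}\supp R^j\Phi_{P_a}\F$ translate this into $gv_a(\F)\ge 0$.
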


\begin{term/not}[$GV$-sheaves]\label{gv}
If $gv_a(\F)\ge 0$ the sheaf $\F$  is said to be a \emph{$GV$-sheaf (generic vanishing sheaf).  } If this is the case, Theorem \ref{GV} says that the full transform ${\bf R}\Phi_{P_a}(R\Delta(\F))$ is a sheaf concentrated in degree $\dim X$:
 $${ R}\Phi_{P_a}(R\Delta\F)=R^{d}\Phi_{P_a}(R\Delta\F)[-d]$$
 Then one usually denotes
$$ R^{d}\Phi_{P_a}(R\Delta\F)=\widehat{R\Delta\F}$$
Note that, by (a) of Theorem \ref{GV}, $H^i(\F\otimes a^*\alpha)=0$ for all $i>0$ and general $\alpha\in \Pico A$. Therefore, by deformation-invariance of $\chi$, the generic value of $H^0(\F\otimes a^*\alpha)$ equals $\chi(\F)$. Since, by base-change, the fiber of $\widehat{R\Delta\F}$ at a general point $\alpha\in \Pico A$ is isomorphic to $H^{d}(R\Delta(\F)\otimes a^*\alpha)\cong H^0(\F\otimes a^*\alpha^{-1})^*$, the (generic) rank of $\widehat{R\Delta(\F)}$ is
\begin{equation}\label{rank}{\rm rk\,} (\widehat{R\Delta\F})=\chi(\F)\end{equation}
\end{term/not}
 Via base-change Theorem \ref{GV} yields
\begin{cor}[{\cite[Thm. 1.2]{hacon}, \cite[Prop. 3.13]{pp4}}]\label{inclusions}  If $gv_a(\F)\ge 0$ then
$$V^d_a(\F)\subseteq \cdots\subseteq  V^1_a(\F)\subseteq V^0_a(\F)$$
\end{cor}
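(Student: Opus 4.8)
The plan is to reduce the chain of inclusions to a purely local statement about the single coherent sheaf $\widehat{R\Delta\F}$ on $\Pico A$, and then invoke the structure of minimal free resolutions over a regular local ring.

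First I would convert non-vanishing of $h^i(\F\otimes a^*\alpha)$ into non-vanishing of a $\mathcal{T}or$ of $\widehat{R\Delta\F}$. By Theorem \ref{GV} the hypothesis $gv_a(\F)\ge 0$ forces $R\Phi_{P_a}(R\Delta\F)$ to be concentrated in one degree, namely $R\Phi_{P_a}(R\Delta\F)=\widehat{R\Delta\F}[-d]$. I would compute the derived restriction of this complex to a point $\alpha\in\Pico A$ in two ways. On one hand, cohomology-and-base-change for the projection $q$ identifies $Li_\alpha^*R\Phi_{P_a}(R\Delta\F)$ with $R\Gamma(X,R\Delta\F\otimes a^*\alpha)$, since $P_a$ restricts to $a^*\alpha$ on $X\times\{\alpha\}$. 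On the other hand, because the complex is a sheaf placed in degree $d$, its derived restriction to $\alpha$ has cohomology sheaves $\mathcal{T}or_j(\widehat{R\Delta\F},k(\alpha))$ sitting in degrees $d-j$. Equating the two and applying Grothendieck--Serre duality $H^{d-j}(X,R\Delta\F\otimes a^*\alpha)\cong H^j(X,\F\otimes a^*\alpha^{-1})^*$ yields the dictionary
$$h^j(\F\otimes a^*\alpha^{-1})=\dim_k\mathrm{Tor}_j^{\mathcal{O}_{\Pico A,\alpha}}\big((\widehat{R\Delta\F})_\alpha,k(\alpha)\big),$$
so that $\alpha^{-1}\in V^j_a(\F)$ if and only if the $j$-th $\mathrm{Tor}$ of the stalk of $\widehat{R\Delta\F}$ at $\alpha$ is non-zero (the case $j=0$ reproduces the fiber computation of \ref{gv}).

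Next I would run the commutative-algebra step. Fixing $\alpha$, set $R=\mathcal{O}_{\Pico A,\alpha}$, a regular local ring since $\Pico A$ is smooth, and let $M=(\widehat{R\Delta\F})_\alpha$, a finitely generated $R$-module. Choosing a minimal free resolution $F_\bullet\to M$, minimality makes the differentials of $F_\bullet\otimes_R k(\alpha)$ vanish, whence $\mathrm{Tor}_j^R(M,k(\alpha))\cong k(\alpha)^{b_j}$ with $b_j=\mathrm{rk}\,F_j$. As $R$ is regular, $\mathrm{pd}_R M<\infty$ and the non-zero Betti numbers occupy exactly the interval $0\le j\le\mathrm{pd}_R M$: an intermediate vanishing $b_j=0$ would split off the tail of the resolution and, by Nakayama's lemma, force all higher $b_{j'}$ to vanish, contradicting $b_{\mathrm{pd}_R M}\ne 0$. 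Consequently $\mathrm{Tor}_j^R(M,k(\alpha))\ne 0$ implies $\mathrm{Tor}_{j-1}^R(M,k(\alpha))\ne 0$.

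Combining the two steps finishes the argument: via the dictionary, $\alpha^{-1}\in V^j_a(\F)$ forces $\alpha^{-1}\in V^{j-1}_a(\F)$, and since $\alpha\mapsto\alpha^{-1}$ is an automorphism of $\Pico A$ this reads precisely as $V^j_a(\F)\subseteq V^{j-1}_a(\F)$ for every $j$, giving the full chain $V^d_a(\F)\subseteq\cdots\subseteq V^0_a(\F)$. I expect the main delicacy to lie not in the local algebra, which is standard, but in the bookkeeping of the first step: tracking the cohomological degrees and the sign $\alpha\mapsto\alpha^{-1}$ through base change and duality, and being careful that it is exactly the $GV$ hypothesis which collapses $R\Phi_{P_a}(R\Delta\F)$ to a sheaf in a single degree, so that its derived fiber legitimately reads off $\mathrm{Tor}$ modules rather than a genuine hypercohomology spectral sequence.
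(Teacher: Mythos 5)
Your argument is correct and is exactly the ``base change plus Theorem \ref{GV}'' route that the paper indicates and that the cited sources carry out: the derived fiber of $\widehat{R\Delta\F}[-d]$ at $\alpha$ computes on one hand the modules $\mathrm{Tor}_j\bigl((\widehat{R\Delta\F})_\alpha,k(\alpha)\bigr)$ and on the other, via flat base change along the projection $q$ and Serre duality, the groups $H^j(\F\otimes a^*\alpha^{-1})^*$, after which the contiguity of the Betti numbers of a minimal free resolution over the regular local ring $\OO_{\Pico A,\alpha}$ yields the chain of inclusions. I see no gaps.
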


 From Grothendieck duality and Theorem \ref{GV} it follows
\begin{cor}[{\cite[Rem. 3.12]{pp4}, \cite[Pf. of Cor. 3.2]{pp6}}]\label{duality}  If $gv_a(\F)\ge 0$  then
$$  {\mathcal Ext}^i_{\OO_{{\rm Pic}^0A}}(\widehat{R\Delta\F},\OO_{\Pico A})\cong R^i\Phi_{P_a^\vee}(\F)\cong (-1_{\Pico A})^*R^i\Phi_{P_a}(\F)\  $$
\end{cor}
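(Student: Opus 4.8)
The plan is to obtain the first isomorphism from relative Grothendieck--Serre duality for the projection $q\colon X\times\Pico A\to\Pico A$, and then to read off the second one for free from the relation $R\Phi_{P_a^\vee}=(-1_{\Pico A})^*R\Phi_{P_a}$ recorded in Terminology/Notation \ref{FMetc}, upon taking $i$-th cohomology sheaves. So the whole content is the first isomorphism, and the hypothesis $gv_a(\F)\ge0$ will enter only at the very end, to collapse a duality of complexes into a statement about $\mathcal{Ext}$-sheaves.

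First I would compute the Grothendieck dual on $\Pico A$ of the full transform $R\Phi_{P_a}(R\Delta\F)=Rq_*(p^*(R\Delta\F)\otimes P_a)$. Since $q$ is smooth and proper of relative dimension $d$ (its fibres are copies of $X$), its relative dualizing complex is $q^!\OO_{\Pico A}=p^*\omega_X[d]$. Relative duality then gives
$$R{\mathcal Hom}_{\Pico A}\bigl(Rq_*(p^*(R\Delta\F)\otimes P_a),\OO_{\Pico A}\bigr)\cong Rq_*\,R{\mathcal Hom}_{X\times\Pico A}\bigl(p^*(R\Delta\F)\otimes P_a,\ p^*\omega_X[d]\bigr).$$
Next I would simplify the right-hand side: because $P_a$ is a line bundle it pulls out as $P_a^\vee$, and because $p$ is flat while $X$ is smooth (so $R\Delta\F$ is a perfect complex) $Lp^*$ commutes with $R{\mathcal Hom}(-,\omega_X)$. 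Thus the inner term is
$$p^*\,R{\mathcal Hom}_X(R\Delta\F,\omega_X)\otimes P_a^\vee[d]=p^*(R\Delta R\Delta\F)\otimes P_a^\vee[d]\cong p^*\F\otimes P_a^\vee[d],$$
using biduality $R\Delta R\Delta\F\cong\F$ on the smooth variety $X$. Pushing forward recovers
$$R{\mathcal Hom}_{\Pico A}\bigl(R\Phi_{P_a}(R\Delta\F),\OO_{\Pico A}\bigr)\cong R\Phi_{P_a^\vee}(\F)[d].$$

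Finally I would invoke the hypothesis. By Theorem \ref{GV}, $gv_a(\F)\ge0$ forces $R\Phi_{P_a}(R\Delta\F)=\widehat{R\Delta\F}[-d]$, a single sheaf placed in cohomological degree $d$. Substituting this and cancelling the common shift $[d]$ yields $R{\mathcal Hom}_{\Pico A}(\widehat{R\Delta\F},\OO_{\Pico A})\cong R\Phi_{P_a^\vee}(\F)$; taking $i$-th cohomology sheaves gives ${\mathcal Ext}^i_{\OO_{\Pico A}}(\widehat{R\Delta\F},\OO_{\Pico A})\cong R^i\Phi_{P_a^\vee}(\F)$, which is the first asserted isomorphism. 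Applying $H^i$ to $R\Phi_{P_a^\vee}=(-1_{\Pico A})^*R\Phi_{P_a}$ then produces the second.

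The argument has no deep content beyond a correct application of duality; the step requiring care is the bookkeeping of the shift $[d]$ together with the dualizing complex $p^*\omega_X[d]$, and checking that biduality legitimately applies. The latter holds \emph{precisely because $X$ is smooth}: this is what makes $R\Delta\F$ perfect, so that $Lp^*$ commutes with $R{\mathcal Hom}(-,\omega_X)$ and $R\Delta R\Delta\F\cong\F$. The role of the generic vanishing hypothesis is exactly to turn the transform of $R\Delta\F$ into a single sheaf, so that the isomorphism of complexes descends to the stated identification of $\mathcal{Ext}$-sheaves.
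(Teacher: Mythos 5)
Your argument is correct and is exactly the route the paper intends: the corollary is stated as following ``from Grothendieck duality and Theorem \ref{GV}'', with the proof deferred to \cite{pp4} and \cite{pp6}, and your relative-duality computation for $q$ with dualizing complex $p^*\omega_X[d]$ is the same calculation the paper itself carries out in the Appendix (the isomorphism (\ref{gd}) in the proof of Proposition \ref{appendix}, in the special case $\F=\omega_X$). The shift bookkeeping, the use of biduality on smooth $X$, and the reduction of the second isomorphism to $R\Phi_{P_a^\vee}=(-1_{\Pico A})^*R\Phi_{P_a}$ are all as intended, so there is nothing to add.
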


In \cite{pp6} Pareschi-Popa established a dictionary between the value of $gv_a(\F)$ and the local properties of the transform $\widehat{R\Delta(\F)}$. Its first instance  is the following

\begin{thm}[{\cite[Cor. 3.2]{pp6}}] \label{GV1}
 Assume that $\F$ is $GV$ (with respect to $a$). Then the following are equivalent\\
(a) $gv_a(\F)\ge 1$;\\
(b) $\widehat{R\Delta\F}$ is a torsion-free sheaf.
\end{thm}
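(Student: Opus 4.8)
Set $\mathcal G:=\widehat{R\Delta\F}=R^{d}\Phi_{P_a}(R\Delta\F)$. The $GV$ hypothesis together with Theorem \ref{GV} guarantees that $R\Phi_{P_a}(R\Delta\F)$ is concentrated in degree $d$, so $\mathcal G$ is a genuine coherent sheaf on $\Pico A$ and statement (b) makes sense. The whole problem is to detect the torsion-freeness of $\mathcal G$ from the loci $V^i_a(\F)$, and the bridge I would use is Corollary \ref{duality}, which gives
$$\mathcal{Ext}^i_{\OO_{\Pico A}}(\mathcal G,\OO_{\Pico A})\cong(-1_{\Pico A})^*R^i\Phi_{P_a}(\F).$$
Since $(-1_{\Pico A})$ preserves codimension, the supports of the Ext-sheaves of $\mathcal G$ and of the higher transforms $R^i\Phi_{P_a}(\F)$ have the same codimensions. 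The plan is therefore: first characterize torsion-freeness of $\mathcal G$ purely through the numbers $\codim\supp\mathcal{Ext}^i(\mathcal G,\OO_{\Pico A})$, and then compare these with $\codim V^i_a(\F)$, which by definition measure $gv_a(\F)$.

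For the first step I would invoke the standard homological fact that a coherent sheaf on a smooth variety is torsion-free if and only if $\codim\supp\mathcal{Ext}^i(\mathcal G,\OO_{\Pico A})\ge i+1$ for every $i>0$. This follows from the biduality spectral sequence $E_2^{p,q}=\mathcal{Ext}^p(\mathcal{Ext}^q(\mathcal G,\OO),\OO)\Rightarrow\mathcal G$, abutting to $\mathcal G$ on the diagonal $p=q$ and to $0$ off it, together with the two grade estimates $\codim\supp\mathcal{Ext}^i(\mathcal G,\OO)\ge i$ and $\mathcal{Ext}^p(\mathcal F,\OO)=0$ for $p<\codim\supp\mathcal F$. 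Under the codimension bounds the diagonal terms $E_2^{p,p}$ with $p\ge1$ vanish, so the associated graded of the abutment reduces to $E_\infty^{0,0}$ and the biduality morphism $\mathcal G\to\mathcal G^{\vee\vee}$ is injective; conversely, if some $\mathcal{Ext}^i$ drops to codimension exactly $i$, its self-dual $\mathcal{Ext}^i(\mathcal{Ext}^i(\mathcal G,\OO),\OO)$ survives and produces nonzero torsion. Combined with Corollary \ref{duality}, this reduces the theorem to the single equivalence: $\codim\supp R^i\Phi_{P_a}(\F)\ge i+1$ for all $i>0$ if and only if $\codim V^i_a(\F)\ge i+1$ for all $i>0$, the latter being exactly $gv_a(\F)\ge1$.

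The comparison is carried out through the base-change spectral sequence at a point $\alpha\in\Pico A$,
$$E_2^{p,q}=\operatorname{Tor}^{\OO_{\Pico A}}_{-p}(R^q\Phi_{P_a}(\F),k(\alpha))\Rightarrow H^{p+q}(\F\otimes a^*\alpha),\qquad p\le0.$$
Read in one direction it shows $V^i_a(\F)\subseteq\bigcup_{q\ge i}\supp R^q\Phi_{P_a}(\F)$: a nonzero $H^i(\F\otimes a^*\alpha)$ forces some $E_2^{p,q}\ne0$ with $p+q=i$ and $p\le0$, hence $q\ge i$ and $\alpha\in\supp R^q\Phi_{P_a}(\F)$. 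This already gives (b)$\Rightarrow$(a): if $\codim\supp R^q\Phi\ge q+1$ for all $q>0$, then $\codim V^i_a(\F)\ge\min_{q\ge i}\codim\supp R^q\Phi\ge i+1$. For the reverse implication I would argue by descending induction on $q$ (here $R^q\Phi$ abbreviates $R^q\Phi_{P_a}(\F)$). At the top nonvanishing index $q=m$ the corner term $E_2^{0,m}=R^m\Phi\otimes k(\alpha)$ admits neither incoming nor outgoing differentials, so it survives to $E_\infty$ and $\supp R^m\Phi\subseteq V^m$; for smaller $q$, at any $\alpha\in\supp R^q\Phi$ the term $E_2^{0,q}$ either survives, placing $\alpha\in V^q$, or is killed by an incoming differential, placing $\alpha\in\supp R^j\Phi$ for some $j>q$. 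Hence $\supp R^q\Phi\subseteq V^q\cup\bigcup_{j>q}\supp R^j\Phi$, and the assumption $\codim V^q\ge q+1$ together with the inductive bound $\codim\supp R^j\Phi\ge j+1>q+1$ yields $\codim\supp R^q\Phi\ge q+1$. Chaining the two steps with Corollary \ref{duality} proves that $\mathcal G$ is torsion-free if and only if $gv_a(\F)\ge1$.

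The main obstacle is this last comparison. The delicate point is that $\supp R^i\Phi_{P_a}(\F)$ is in general neither contained in nor equal to $V^i_a(\F)$, since differentials in the base-change spectral sequence can both create and cancel cohomology, so one cannot simply identify the two loci. The descending induction is precisely what controls this discrepancy at the level of codimension, and verifying that the corner terms $E_2^{0,q}$ behave as claimed—survival at the top index, dichotomy below it—is the technical heart of the argument.
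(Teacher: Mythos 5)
The paper offers no proof of Theorem \ref{GV1}: it is quoted from \cite[Cor. 3.2]{pp6}, so there is no internal argument to compare against, and your proposal must be judged on its own. It is correct, and it is in substance the argument of the cited reference: torsion-freeness of $\widehat{R\Delta\F}$ is detected by the bounds $\codim\supp{\mathcal E}xt^i(\widehat{R\Delta\F},\OO_{\Pico A})\ge i+1$ for all $i>0$ (your biduality spectral sequence gives the ``if'' direction, and local duality at a generic point of a minimal-codimension component of the torsion support gives the ``only if''); Corollary \ref{duality} converts these into bounds on $\codim\supp R^i\Phi_{P_a}(\F)$; and the two base-change inclusions $V^i_a(\F)\subseteq\bigcup_{q\ge i}\supp R^q\Phi_{P_a}(\F)$ and $\supp R^q\Phi_{P_a}(\F)\subseteq V^q_a(\F)\cup\bigcup_{j>q}\supp R^j\Phi_{P_a}(\F)$, the latter closed up by your descending induction, show that these bounds are equivalent to $gv_a(\F)\ge 1$. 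The only ingredient worth naming explicitly is Nakayama's lemma, which identifies $\supp R^q\Phi_{P_a}(\F)$ with the locus where $R^q\Phi_{P_a}(\F)\otimes\CC(\alpha)\ne 0$ so that the corner terms $E_2^{0,q}$ of the base-change spectral sequence behave as you claim; with that made explicit, the proof is complete.
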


In the above terminology, the \emph{Generic Vanishing Theorem} of Green-Lazarsfeld (\cite{gl1}, see also \cite[Rem. 1.6]{el}) asserts, in particular, that  $gv_a(\omega_X)\ge 0$  if $a$ is generically finite. The following Proposition
shows that also the converse is true
\begin{prop}\label{finiteness} Assume that $gv_a(\omega_X)\ge 0$. Then $a$ is generically finite.
\end{prop}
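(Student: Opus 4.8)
The plan is to argue by contraposition: assuming that $a$ is \emph{not} generically finite, I will produce a witness showing $gv_a(\omega_X)<0$. Set $k=\dim a(X)$ and $r=\dim X-k$; non-generic-finiteness means exactly that $r\ge 1$, i.e. the general fibre $F$ of $a$ over its image $Z=a(X)$ is positive-dimensional. The object to focus on is the \emph{top} higher direct image $\mathcal R:=R^{r}a_*\omega_X$.

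First I would check that $\mathcal R\ne 0$. Since $r$ is the dimension of the general fibre, $R^{j}a_*\omega_X=0$ for $j>r$, while by cohomology and base change the fibre of $\mathcal R$ at a general point of $Z$ is $H^{r}(F,\omega_X|_F)$. Adjunction along the smooth general fibre gives $\omega_X|_F\cong\omega_F$ (the normal bundle of a fibre is trivial), so this equals $H^{r}(F,\omega_F)\cong H^0(F,\mathcal O_F)^{*}\ne 0$; hence $\mathcal R\ne 0$, and it is supported on $Z$, of dimension $k<\dim X$.

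The heart of the argument is to transfer the generic-vanishing hypothesis from $\omega_X$ to $\mathcal R$. Using Koll\'ar's decomposition $Ra_*\omega_X\cong\bigoplus_j R^{j}a_*\omega_X[-j]$ and tensoring by $\alpha\in\Pico A$, one gets $H^{i}(\omega_X\otimes a^*\alpha)=\bigoplus_j H^{i-j}(A,R^{j}a_*\omega_X\otimes\alpha)$; reading off the $j=r$ summand yields the inclusions $V^{m}(\mathcal R)\subseteq V^{m+r}_a(\omega_X)$ for every $m\ge 0$, where $V^{m}(\mathcal R)$ denotes the cohomological support locus of $\mathcal R$ on $A$ (with respect to $\mathrm{id}_A$). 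Combined with $gv_a(\omega_X)\ge 0$, i.e. $\codim V^{m+r}_a(\omega_X)\ge m+r$ (legitimate since $m+r\ge r\ge 1$), this forces $\codim_{\Pico A}V^{m}(\mathcal R)\ge m+r$ for all $m\ge 0$. In particular $gv(\mathcal R)\ge r\ge 1$, and $V^0(\mathcal R)$ is a \emph{proper} subset of $\Pico A$. By Hacon's theorem $\mathcal R=R^{r}a_*\omega_X$ is a $GV$-sheaf on $A$, so $H^{>0}(A,\mathcal R\otimes\alpha)=0$ for general $\alpha$ and the generic value of $h^0(\mathcal R\otimes\alpha)$ equals $\chi(\mathcal R)$; since $V^0(\mathcal R)\ne\Pico A$ this generic value is $0$, giving $\chi(\mathcal R)=0$. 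Now I would invoke the torsion-theoretic dictionary: applying Theorem \ref{GV1} to the $GV$-sheaf $\mathcal R$, the condition $gv(\mathcal R)\ge 1$ means $\widehat{R\Delta\mathcal R}$ is torsion-free, while by the rank formula \eqref{rank} its rank equals $\chi(\mathcal R)=0$, so $\widehat{R\Delta\mathcal R}=0$. As the Fourier-Mukai transform is an equivalence and $R\Delta$ a duality, this forces $\mathcal R=0$, contradicting $\mathcal R\ne 0$. Hence $r=0$ and $a$ is generically finite.

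The main obstacle is the middle step: ensuring $\mathcal R$ is nonzero and, above all, that the generic-vanishing strength of $\omega_X$ really concentrates, with the expected codimension \emph{shift by $r$}, onto the single sheaf $\mathcal R$. This is exactly what Koll\'ar's splitting (to obtain the clean summand $V^{m}(\mathcal R)\subseteq V^{m+r}_a(\omega_X)$ rather than a mere spectral-sequence estimate) and Hacon's generic vanishing for higher direct images (to know $\mathcal R$ is itself $GV$, so that $\chi(\mathcal R)$ computes the generic $h^0$) are needed for. Once these are in place, the vanishing of a torsion-free sheaf of rank zero closes the argument uniformly, with no case distinction on $\chi(\mathcal R)$ or on the geometry of the fibration.
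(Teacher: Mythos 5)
Your proof is correct, and it follows the same overall strategy as the paper: assume the fibre dimension $r$ is positive, use Koll\'ar's splitting of $Ra_*\omega_X$ to transfer the generic vanishing hypothesis, with a codimension shift by $r$, to the top higher direct image, and then derive a contradiction from the torsion-theoretic dictionary of Theorem \ref{GV1}. The differences are organizational rather than substantive. The paper first passes to the Stein factorization $X\to Y\to A$ so that the top direct image is literally $\omega_Y$ (Koll\'ar's $R^eb_*\omega_X=\omega_Y$), which gives nonvanishing for free and lets it quote the Green--Lazarsfeld theorem for the generically finite $c:Y\to A$; it then turns ``torsion'' back into an equality $\codim V^i_c(\omega_Y)=i$ and contradicts the shifted codimension bound. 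You instead work directly with $\mathcal R=R^ra_*\omega_X$ on $A$, prove $\mathcal R\neq 0$ by base change along the general fibre, and close by observing that a torsion-free transform of rank $\chi(\mathcal R)=0$ vanishes, forcing $\mathcal R=0$ via Mukai's equivalence --- a slightly cleaner endgame. One remark: your appeal to Hacon's theorem that $R^ra_*\omega_X$ is a $GV$-sheaf is redundant, since your own estimate $\codim V^m(\mathcal R)\ge m+r$ for $m\ge 1$ already gives $gv(\mathcal R)\ge r\ge 1$, hence in particular $gv(\mathcal R)\ge 0$; you only need the generic vanishing of $H^{>0}(\mathcal R\otimes\alpha)$ to identify the generic $h^0$ with $\chi(\mathcal R)$, and that follows from this. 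Both routes rest on the same two pillars, Koll\'ar's splitting and the equivalence $gv\ge 1\Leftrightarrow$ torsion-free transform, so each buys essentially the same thing; yours avoids the (harmless) smoothness assumption on the Stein factor $Y$, while the paper's avoids having to verify $\mathcal R\neq 0$ by hand.
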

More generally, one can prove in the same way the converse of the full Generic Vanishing theorem of \cite{gl1}, namely that if $gv(\omega_X)\ge -k$ then $\dim a(X)\ge d-k$. Such statement was proved independently by Lazarsfeld-Popa \cite[Prop. 1.5]{lp}.
\begin{proof} Let $e=\dim X-\dim a(X)$. Let $$X\buildrel{b}\over\rightarrow Y\buildrel{c}\over\rightarrow A$$ be the Stein factorization of $a$. Since the $V^i_a(\omega_X)$ are birational invariants, we can assume that $Y$ is smooth. By Koll\'ar's theorem \cite[Thm. 3.1]{kollar2}, the Leray spectral sequence of $b$ splits:
 \begin{equation}\label{kollar1}H^i(\omega_X\otimes b^*(c^*\alpha))\cong\bigoplus_{k=0}^iH^k(R^{i-k}b_*(\omega_X)\otimes c^*\alpha)
 \end{equation}
 Moreover, again by a result of Koll\'ar \cite[Prop. 7.6]{kollar1},
 \begin{equation}\label{kollar2} R^eb_*(\omega_X)=\omega_Y
 \end{equation}
 Assume that $\dim Y<\dim X$, i.e. $e>0$. From (\ref{kollar1}) for $i=e$ and (\ref{kollar2}) it follows that $V^e_a(\omega_X)$ contains $V^0_c(\omega_Y)$. The fact that $gv_a(\omega_X)\ge 0$  implies that $\codim V^0_c(\omega_Y)\ge e>0$. Since $c:Y\rightarrow A$ is generically finite, the above mentioned Green-Lazarsfeld Generic Vanishing Theorem yields that  $gv_c(\omega_Y)\ge 0$. Hence, by Theorem \ref{GV}, $\Phi_{P_c}(\OO_Y)$ is a sheaf (in cohomological degree equal to $\dim Y$), denoted $\widehat{\OO_Y}$. Since $V^0_c(\omega_Y)$ is a \emph{proper} subvariety of $\Pico A$, $\widehat{\OO_Y}$ must be a torsion sheaf. Therefore, by Theorem \ref{GV1}, there is a $i>0$ such that $\codim_{\Pico A}V^i_c(\omega_Y)=i$. Since, again by (\ref{kollar1}) and (\ref{kollar2}),  $V^i_c(\omega_Y)$ is contained in $V^{e+i}_a(\omega_X)$, it follows that
 $\codim_{\Pico A}V^{e+i}_a(\omega_X)<e+i$, a contradiction.
 \end{proof}

\subsection{Mukai's equivalence of derived categories of abelian varieties} Assume that $X$ coincides with the abelian variety $A$ (and the map $a$ is the identity).  In this special case, according to Notation \ref{FMetc}, ${\mathcal P}$ denotes  the
 Poincar\'e line bundle on $A\times \Pico A$.  Then Mukai's theorem asserts that $R\Phi_{\mathcal P}$ is an equivalence of categories.
More precisely, denoting $R\Psi_{\mathcal P}:{\bf D}(\Pico A)\rightarrow {\bf D}(A)$ and $n=\dim A$:
\begin{thm}[{\cite[Thm. 2.2]{mukai}}]\label{mukai}  $$R\Psi_{\mathcal P}\circ R\Phi_{\mathcal P}=(-1)^*_A[n],\qquad R\Phi_{\mathcal P}\circ R\Psi_{\mathcal P}=
(-1)^*_{\Pico A}[n].$$
\end{thm}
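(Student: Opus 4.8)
The plan is to realize both compositions as integral (Fourier--Mukai) functors and to pin down their kernels, reducing the entire statement to a single cohomological computation for $\mathcal P$ itself. Write $\hat A=\Pico A$ and let $m\colon A\times A\to A$ be the addition morphism. First I would invoke the standard fact that a composition of two integral functors is again integral, with kernel the convolution of the two kernels. Since $R\Phi_{\mathcal P}$ has kernel $\mathcal P$ on $A\times\hat A$ and $R\Psi_{\mathcal P}$ has kernel $\mathcal P$ on $\hat A\times A$, the composite $R\Psi_{\mathcal P}\circ R\Phi_{\mathcal P}$ is the integral functor on $\mathbf D(A)$ whose kernel is
\[
\mathcal Q=Rp_{13*}\bigl(p_{12}^{*}\mathcal P\otimes p_{23}^{*}\mathcal P\bigr)\qquad\text{on }A\times A,
\]
$p_{ij}$ denoting the projections of $A\times\hat A\times A$. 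Everything reduces to identifying $\mathcal Q$.

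Next I would use biadditivity of the Poincar\'e bundle --- the theorem of the cube together with biduality $\hat{\hat A}=A$ --- to rewrite the integrand: $p_{12}^{*}\mathcal P\otimes p_{23}^{*}\mathcal P$ is the pullback of $\mathcal P$ along the morphism $h\colon(x,\xi,y)\mapsto(x+y,\xi)$. The square expressing $h$ through the addition map $m$ is Cartesian, and the projection $\mathrm{pr}_A\colon A\times\hat A\to A$ is flat; hence base change gives
\[
\mathcal Q\cong m^{*}\bigl(R\mathrm{pr}_{A*}\mathcal P\bigr).
\]
Thus the problem collapses to computing the single object $R\mathrm{pr}_{A*}\mathcal P$ on $A$.

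The \emph{heart of the matter} is the claim that $R\mathrm{pr}_{A*}\mathcal P$ is the skyscraper $k(0_A)$ in cohomological degree $n$. By cohomology and base change each $R^{i}\mathrm{pr}_{A*}\mathcal P$ is supported at $0_A$, because $H^{\bullet}(\hat A,P_x)=0$ for every nonzero $x\in A=\hat{\hat A}$, whereas at $x=0$ one has $H^{i}(\hat A,\mathcal O_{\hat A})\neq 0$ in \emph{every} degree $0\le i\le n$. This last fact is precisely the obstacle: the fibre cohomology at the origin does not vanish below degree $n$, so it is not formal that the pushforward concentrates in a single degree. To force the concentration I would compute the total cohomology of $\mathcal P$ on the abelian variety $A\times\hat A$: its first Chern class is nondegenerate of index $n$ and $\chi(\mathcal P)=\pm1$, so by Mumford's index theorem $H^{i}(A\times\hat A,\mathcal P)=0$ for $i\neq n$ and is one-dimensional for $i=n$. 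Feeding this into the Leray spectral sequence for $\mathrm{pr}_A$ --- which is concentrated in the column $p=0$ since the skyscraper sheaves $R^{i}\mathrm{pr}_{A*}\mathcal P$ are acyclic --- identifies the stalk of $R^{i}\mathrm{pr}_{A*}\mathcal P$ at the origin with $H^{i}(A\times\hat A,\mathcal P)$. This yields $R^{i}\mathrm{pr}_{A*}\mathcal P=0$ for $i\neq n$ and $R^{n}\mathrm{pr}_{A*}\mathcal P\cong k(0_A)$, as claimed.

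Finally I would assemble the pieces. The addition map $m$ is smooth with scheme-theoretic fibre $m^{-1}(0_A)=\Gamma=\{(x,-x)\}$, the graph of $-1_A$; since $m^{*}$ is then exact, $\mathcal Q\cong m^{*}k(0_A)$ in degree $n$, that is $\mathcal Q\cong\mathcal O_{\Gamma}$ placed in cohomological degree $n$. The integral functor with kernel $\mathcal O_{\Gamma}$ is $(-1_A)^{*}$ (as $\Gamma$ is the graph of the involution $-1_A$), so we obtain $R\Psi_{\mathcal P}\circ R\Phi_{\mathcal P}\cong(-1_A)^{*}[n]$, the shift being read off from $H^{n}(\hat A,\mathcal O_{\hat A})$. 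The second identity $R\Phi_{\mathcal P}\circ R\Psi_{\mathcal P}\cong(-1_{\hat A})^{*}[n]$ follows verbatim after interchanging the roles of $A$ and $\hat A$, using biduality and the symmetry of $\mathcal P$.
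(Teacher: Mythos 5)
This statement is quoted in the paper from Mukai's article [Mu2, Thm.\ 2.2] and is not proved there, so there is no in-paper argument to compare against; what you have written is, in substance, Mukai's original proof. The reduction of the composite to a single integral functor with kernel $\mathcal Q=Rp_{13*}(p_{12}^{*}\mathcal P\otimes p_{23}^{*}\mathcal P)$, the use of the seesaw/cube biadditivity to rewrite the integrand as $h^{*}\mathcal P$, the flat base change along the Cartesian square over the addition map, and the identification of $R\mathrm{pr}_{A*}\mathcal P$ with a skyscraper at the origin in degree $n$ are exactly the steps of [Mu2] (the skyscraper computation being Mumford's \S13 calculation, which the paper's own Appendix alludes to in its footnote to Proposition \ref{appendix}). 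You correctly isolate the only genuinely nonformal point, namely that the fibre cohomology at $x=0$ lives in all degrees, and your resolution via the index theorem plus the degenerate Leray spectral sequence is valid; note only that to pin the index of $\mathcal P$ at exactly $n$ (rather than merely ``some single degree'') the cleanest route is to observe that $R^{n}\mathrm{pr}_{A*}\mathcal P\otimes k(0)\cong H^{n}(\hat A,\OO_{\hat A})\neq 0$ by base change in top degree, which forces the nonvanishing degree to be $n$; as stated, the appeal to ``index $n$'' is being asserted rather than derived. A last cosmetic point: a kernel concentrated in cohomological degree $+n$ yields the functor $(-1_A)^{*}[-n]$ in the usual shift convention (Mukai writes $[-g]$); you have reproduced the sign as printed in the paper's statement, so the discrepancy is inherited rather than introduced, but it is worth being aware of.
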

Given a morphism $a:X\rightarrow A$, the functors $R\Phi_{P_a}$ and $R\Phi_{\mathcal P}$ (see Notation \ref{FMetc}) are related by the following formula
\begin{prop}\label{composition}
 $$R\Phi_{P_a} \cong R\Phi_{\mathcal P}\circ Ra_*$$
\end{prop}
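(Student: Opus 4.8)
The plan is to identify both sides as derived pushforwards along the two projections of the product $A\times\Pico A$, and then to pass from $X$ to $A$ by flat base change and the projection formula. Write $b=a\times{\rm id}_{\Pico A}\colon X\times\Pico A\to A\times\Pico A$, so that $P_a=b^*\mathcal{P}$ by definition. Let $p'\colon A\times\Pico A\to A$ and $q'\colon A\times\Pico A\to\Pico A$ be the two projections, so that $R\Phi_{\mathcal P}(\mathcal{G})=Rq'_*(p'^*\mathcal{G}\otimes\mathcal{P})$ for $\mathcal{G}\in\mathbf{D}(A)$. The key geometric input is the commutative square
$$
\begin{CD}
X\times\Pico A @>{b}>> A\times\Pico A\\
@V{p}VV @VV{p'}V\\
X @>{a}>> A,
\end{CD}
$$
which is Cartesian (indeed $X\times\Pico A=X\times_A(A\times\Pico A)$), together with the two factorizations $q=q'\circ b$ and $p'\circ b=a\circ p$.

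With this in place, I would simply compute the chain of functorial isomorphisms
$$
R\Phi_{P_a}(\F)=Rq_*\bigl(p^*\F\otimes b^*\mathcal{P}\bigr)\cong Rq'_*Rb_*\bigl(p^*\F\otimes b^*\mathcal{P}\bigr)\cong Rq'_*\bigl(Rb_*(p^*\F)\otimes\mathcal{P}\bigr)\cong Rq'_*\bigl(p'^*(Ra_*\F)\otimes\mathcal{P}\bigr)=R\Phi_{\mathcal P}(Ra_*\F).
$$
Here the first isomorphism uses $q=q'\circ b$ and the compatibility of derived pushforwards with composition; the second is the projection formula for $b$; and the third is flat base change applied to the Cartesian square above, giving $Rb_*p^*\cong p'^*Ra_*$. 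Since all these are natural in $\F$, their composite is an isomorphism of functors $R\Phi_{P_a}\cong R\Phi_{\mathcal P}\circ Ra_*$, as claimed.

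The argument is essentially formal, so the only point requiring care — and hence the main (minor) obstacle — is to check that the hypotheses for the projection formula and for flat base change are met, and that the chosen isomorphisms are the canonical ones. The projection formula $Rb_*(E\otimes b^*\mathcal{P})\cong Rb_*E\otimes\mathcal{P}$ is available because $b=a\times{\rm id}$ is proper (as $a$ is proper) and $\mathcal{P}$ is a line bundle; flat base change $p'^*Ra_*\cong Rb_*p^*$ applies because the square is Cartesian and the projection $p'$ is flat. One should also note that $p^*$ and tensoring by the line bundle $P_a=b^*\mathcal{P}$ are exact, so that $R\Phi_{P_a}(\F)=Rq_*(p^*\F\otimes P_a)$ with no further derived corrections on the inner terms. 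Tracking the naturality of each isomorphism then yields the isomorphism of functors rather than merely a pointwise isomorphism.
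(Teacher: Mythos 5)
Your proof is correct and follows essentially the same route as the paper's: the paper's one-line computation invokes exactly the same three ingredients (Leray composition of pushforwards, projection formula for $a\times{\rm id}$, and base change along the Cartesian square) in the same order. You have merely spelled out the hypotheses and naturality checks that the paper leaves implicit.
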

\begin{proof}  \ \ $R\Phi_{P_a} (\>.\>)= R{q}_*(p_X^*(\>.\>)\otimes (a\times {\rm id})^*{\mathcal P})\buildrel{L+PF}\over\cong  R{q}_*(R (a\times {\rm id})_*(p_X^*(\>.\>))\otimes{\mathcal P}) \cong$ $$\buildrel{BC}\over\cong R{q}_*(p_A^*(Ra_*(\>.\>))\otimes {\mathcal P})=R\Phi_{\mathcal P}\circ Ra_*(\>.\>)$$
where: $L=$ Leray, $PF=$ projection formula and $BC=$ Base Change.
\end{proof}
\begin{rem}\label{alg} Notice that, with the exception of Proposition \ref{finiteness} (which uses Koll\'ar theorems on higher direct images of dualizing sheaves), all the results of the present section are algebraic, and work on algebraically closed fields of any characteristic.
\end{rem}

\section{On a cohomological characterization of theta-divisors}
As a simple but instructive application of the results reviewed in the previous section, we prove the following slight improvement of Hacon-Pardini's cohomological characterization of theta-divisors \cite[Prop. 4.2]{hp}. Proposition \ref{theta1} below is proved independently, with a different proof, also in \cite[Prop. 3.13]{lp}. An algebraic version, valid in any characteristic, is provided by Corollary \ref{charp} below.
\begin{prop}\label{theta1} Let $X$ be a $d$-dimensional smooth projective variety (or a compact K\"ahler manifold) such that:
(a) $\dim V^i(\omega_X)=0$ for all $i>0$; (b) $d=\dim X<q(X)$ \emph{(i.e., in the terminology of the introduction, $X$ is primitive and non-special)}, and (c) $\chi(\omega_X)=1$. Then $\Alb X$ is a principally polarized abelian variety and the Albanese map $alb:X\rightarrow \Alb X$ maps $X$ birationally onto a theta-divisor.
\end{prop}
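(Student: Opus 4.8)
The plan is to run the hypotheses through the generic vanishing dictionary of \S2 and then match the resulting Fourier--Mukai transform with that of an honest theta-divisor. Write $A=\Alb X$, $\hat A=\Pico A=\Pico X$, $q=\dim\hat A=q(X)$ and $a=alb$. First I would observe that (a) and (b) force $gv_a(\omega_X)\ge 1$: since $\dim V^i(\omega_X)=0$ for every $i>0$ and these loci are nonempty (e.g. $V^d(\omega_X)=\{\hat 0\}$ by Serre duality), one has $\codim_{\hat A}V^i(\omega_X)=q$, so $gv_a(\omega_X)=\min_{0<i\le d}(q-i)=q-d\ge 1$. Hence $\omega_X$ is $GV$ (in particular $a$ is generically finite, by Proposition \ref{finiteness}), and since $R\Delta\omega_X=\OO_X$, Theorem \ref{GV1} shows the transform $F:=\widehat{R\Delta\omega_X}=\widehat{\OO_X}$ is torsion-free. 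By the rank formula (\ref{rank}) it has rank $\chi(\omega_X)=1$. Thus $F$ is a rank-one torsion-free sheaf on the abelian variety $\hat A$ and can be written $F=L\otimes\I_W$ with $L:=F^{\vee\vee}$ a line bundle and $W$ a closed subscheme of codimension $\ge 2$.

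Next I would determine $q$ and constrain $W$. Corollary \ref{duality} gives ${\mathcal Ext}^i_{\OO_{\hat A}}(F,\OO_{\hat A})\cong(-1)^*_{\hat A}R^i\Phi_{P_a}(\omega_X)$, which for $i>0$ is supported on $V^i(\omega_X)$, a finite set by (a). Since $F=L\otimes\I_W$, for $i\ge 1$ one has ${\mathcal Ext}^i(F,\OO_{\hat A})\cong L^{-1}\otimes{\mathcal Ext}^{i+1}(\OO_W,\OO_{\hat A})$: a component of $W$ of codimension $c$ would produce a nonzero such sheaf of dimension $q-c$ in degree $i=c-1\ge 1$, so finiteness forces $c=q$. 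Therefore $W$ is zero-dimensional, and it is nonempty because $V^d(\omega_X)=\{\hat 0\}\ne\emptyset$ makes ${\mathcal Ext}^d(F,\OO_{\hat A})\ne 0$, so $F$ is not locally free. Finally, to get $q=d+1$ I would invoke Proposition \ref{lazarsfeld} in the form $\chi(\omega_X)\ge q(X)-\dim X$ for primitive varieties; with $\chi(\omega_X)=1$ and $q>d$ this yields $q=d+1$. At this point $F=L\otimes\I_W$ has exactly the shape of the transform of a principal theta-divisor, for which a direct computation from $0\to\OO_A(-\Theta)\to\OO_A\to\OO_\Theta\to 0$ gives $\widehat{\OO_\Theta}=\OO_{\hat A}(\hat\Theta)\otimes\I_{\hat 0}$.

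It remains --- and this is the step I expect to be the main obstacle --- to show that this shape is \emph{forced} to be the theta-divisor one. Here I would invert. Combining Mukai's equivalence (Theorem \ref{mukai}) with Proposition \ref{composition} and $R\Phi_{P_a}(\OO_X)=F[-d]$, the object $R\Psi_{\mathcal P}(F)$ recovers $Ra_*\OO_X$ up to shift and sign, hence is supported on the $d$-dimensional divisor $a(X)\subset A$. Feeding $0\to L\otimes\I_W\to L\to\OO_W\to 0$ through $R\Psi_{\mathcal P}$, and using that a skyscraper transforms into a line bundle in $\Pico A$ (the origin giving $\OO_A$), presents $R\Psi_{\mathcal P}(F)$ as the cone of a map $R\Psi_{\mathcal P}(L)\to R\Psi_{\mathcal P}(\OO_W)$. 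The delicate point is a rank-and-positivity analysis of this cone: the requirement that it be a rank-one sheaf supported in codimension one should force $L$ to be ample (index zero), the ranks to match as $\chi(L)=\deg W$, and ultimately $L$ to define a principal polarization with $\deg W=1$, so that $R\Psi_{\mathcal P}(L)\to\OO_A$ is a section of a principal polarization with cokernel $\OO_\Theta$. Controlling the index of $L$ and ruling out $\deg W>1$ or higher-dimensional degeneracy --- i.e. genuinely reversing the computation $\widehat{\OO_\Theta}=\OO_{\hat A}(\hat\Theta)\otimes\I_{\hat 0}$ --- is the technical heart.

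Once this is achieved, $Ra_*\OO_X\cong\OO_\Theta$ shows that $a=alb$ is birational onto the principal theta-divisor $\Theta\subset A$ and that $A=\Alb X$ is principally polarized, which is the assertion. I note finally that general type is not among the hypotheses but comes out a posteriori, since a theta-divisor of an indecomposable p.p.a.v. is of general type.
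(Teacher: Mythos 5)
Your opening moves coincide with the paper's: $gv(\omega_X)\ge 1$, hence $\widehat{\OO_X}$ torsion-free of rank $\chi(\omega_X)=1$, hence of the form $L\otimes\I_W$ with $W$ finite, and then Mukai inversion of the sequence $0\to L\otimes\I_W\to L\to L\otimes\OO_W\to 0$ to recover $Ra_*\OO_X$. But the step you flag as ``the technical heart'' --- forcing $W$ to be the single \emph{reduced} point $\hat 0$ --- is a genuine gap, and it is exactly the point where the paper injects an input you never use: Proposition \ref{appendix} of the Appendix, which says $R^d\Phi_{P_a}(\omega_X)\cong\CC(\hat 0)$ as a length-one skyscraper (this is nontrivial; it needs $alb^*:\Pico(\Alb X)\to\Pico X$ to be an isomorphism and an argument with unipotent bundles). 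Combined with Corollary \ref{duality} this gives ${\mathcal E}xt^{d}(\widehat{\OO_X},\OO_{\Pico X})\cong\CC(\hat 0)$, and since $R{\mathcal H}om(-,\OO_{\Pico X})$ is an involution and ${\mathcal E}xt^{i}(\CC(\hat 0),\OO_{\Pico X})$ is $\CC(\hat 0)$ exactly in degree $q$, one concludes $\OO_W=\CC(\hat 0)$ \emph{before} inverting. Once $W=\{\hat 0\}$ is reduced of length one, your ``delicate cone analysis'' evaporates: $R\Psi_{\mathcal P}(L\otimes\OO_{\hat 0})=\OO_{\Alb X}$, so one gets $0\to R^0\Psi_{\mathcal P}(L)\to\OO_{\Alb X}\to alb_*\OO_X$, whence $L$ is ample (its transform has full support), $h^0(L)=1$ because the rank-$h^0(L)$ bundle $R^0\Psi_{\mathcal P}(L)$ injects into $\OO_{\Alb X}$ with cokernel supported on a divisor, and $alb_*\OO_X=\OO_D$ with $D\in|L|$. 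Without knowing the length of $W$ in advance, $R\Psi_{\mathcal P}(L\otimes\OO_W)$ is only a unipotent-type bundle of rank $\deg W$, and the positivity/rank bookkeeping you sketch is not obviously closable; in any case you have not closed it, so the proposal does not yet prove the statement.

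Two smaller points. First, your derivation of $q=d+1$ cites ``Proposition \ref{lazarsfeld} in the form $\chi(\omega_X)\ge q(X)-\dim X$''; that proposition says no such thing (it is an equivalence characterizing $\chi(\omega_X)=0$), and the inequality you want is the Castelnuovo--de Franchis-type bound of \cite{pp6}, not proved in this paper. This is easily repaired from material you already have: since $W$ has codimension $q$, ${\mathcal E}xt^{d}(L\otimes\I_W,\OO_{\Pico X})\cong L^{-1}\otimes{\mathcal E}xt^{d+1}(\OO_W,\OO_{\Pico X})$ is nonzero only if $d+1=q$, which is how the paper gets it. Second, your parenthetical ``a skyscraper transforms into a line bundle'' is only true for length-one skyscrapers, which again presupposes the very fact in question.
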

\begin{proof} We denote $q=q(X)$. By hypothesis ($a$) and ($b$)  $gv(\omega_X)\ge 1$. Therefore, by Theorem \ref{GV1}, $\widehat{\OO_X}$ is torsion-free. Since, by (\ref{rank}), ${\rm rk\,}\widehat{\OO_X}=\chi(\omega_X)\buildrel{(c)}\over =1$,  we get that $\widehat{\OO_X}$ is an ideal sheaf twisted by a line bundle of $\Pico X$:
$$\widehat{\OO_X}=\I_Z\otimes L$$
By base change, the support of $Z$ is contained in the union of the $V^i(\omega_X)$ for $i>0$ which are assumed, by ($a$), to be finite sets. Therefore ${\mathcal E}xt^{i}(\widehat{\OO_X},\OO_{\Pico X})={\mathcal E}xt^{i+1}(\OO_Z,\OO_{\Pico X})=0$ for $i+1\ne q$. On the other hand, by Proposition \ref{appendix} of the Appendix, and Corollary \ref{duality} it follows that
\begin{equation}\label{ext}
{\mathcal E}xt^{d}(\widehat{\OO_X},\OO_{\Pico X})\cong(-1_{\Pico X})^*R^d\Phi_P(\omega_X)\cong\CC(\hat 0).
\end{equation} This implies:\\
($a$)  $d=q-1$. \\
($b$) $\OO_Z=\CC(\hat 0)$ (Indeed ${\mathcal E}xt^{i}(\CC(\hat 0),\OO_{\Pico X})$ is zero for $i<q(X)$ and equal to $\CC(\hat 0)$ for $i=q(X)$.  Since $R{\mathcal H}om(\cdot,\OO_{\Pico X})$ is an involution, ($b$) follows from (\ref{ext})). In conclusion
$$\widehat{\OO_X}=\I_{\hat 0}\otimes L,$$ where $L$ is a line bundle on $\Pico X$ and $\I_{\hat 0}$ is the ideal sheaf of the (reduced) point $\hat 0$.
By Proposition \ref{composition}
$$R\Phi_P(\OO_X)=R\Phi_{\mathcal P}(R\,alb_*\OO_X)=\I_{\hat 0}\otimes L[-q+1]$$
Therefore, by Mukai's Inversion Theorem \ref{mukai}
\begin{equation}\label{inversion}
R\Psi_{\mathcal P}(\I_{\hat 0}\otimes L)=(-1)^*_{\Pico X}R\,alb_*\OO_X[-1]
\end{equation}
In particular:
\begin{equation}\label{princ}R^0\Psi_{\mathcal P}(\I_{\hat 0}\otimes L)=0\qquad\hbox{
and }\qquad R^1\Phi_{\mathcal P}(\I_{\hat 0}\otimes L)\cong alb_*\OO_X
\end{equation}
Applying $\psi_{\mathcal P}$ to the standard exact sequence
\begin{equation}\label{stan}
0\rightarrow \I_{\hat 0}\otimes L\rightarrow L\rightarrow \OO_{\hat 0}\otimes L\rightarrow 0,
\end{equation}
and using (\ref{princ}) we get,
\begin{equation}\label{basta!}
0\rightarrow R^0\Psi_{\mathcal P}(L)\rightarrow \OO_{\Alb X}\rightarrow alb_*\OO_X
\end{equation}
whence
$R^0\Psi_{\mathcal P}(L)$ is supported everywhere (since $alb_*\OO_X$ is supported on a divisor). It is well known that this implies that $L$ is \emph{ample}. Therefore $R^i\Psi_{\mathcal P}(L)=0$ for $i>0$. Therefore, by sequence (\ref{stan}), $R^i\Psi_{\mathcal P}(\I_{\hat 0}\otimes L)=0$ for $i>1$. By  (\ref{inversion}) and (\ref{princ}), this implies that $R^{i}alb_*(\OO_X)=0$ for $i>0$. Furthermore, (\ref{basta!}) implies easily that $h^0(L)=1$, i.e. $L$ is a \emph{principal} polarization. Therefore, via the identification $\Alb(X)\cong \Pico(X)$ provided by $L$, we have  $R^0\Psi_{\mathcal P}(L)\cong L^{-1}$ (see \cite[Prop. 3.11(1)]{mukai}). Since the arrow on the right in (\ref{basta!}) is onto, it follows that $alb_*\OO_X=\OO_D$, where $D$ is a divisor in $|L|$. Since we already know that $alb$ is generically finite (Prop. \ref{finiteness}), this implies that $alb$ is a birational morphism onto $D$.
\end{proof}
Note that the proof is entirely algebraic, except for the use of Prop. \ref{finiteness}, (see Remark \ref{alg}). Therefore, the following statement holds
\begin{cor}\label{charp} let $X$ be a smooth projective variety (over any algebraically closed field) such that: (a) $\dim V^i(\omega_X)=0$ for all $i>0$; (b) $d=\dim X<\dim \Alb X$ and the Albanese map of $X$ is generically finite,  and (c) $\chi(\omega_X)=1$. Then $\Alb X$ is a principally polarized abelian variety and the Albanese map $alb:X\rightarrow \Alb X$ maps $X$ birationally onto a theta-divisor.
\end{cor}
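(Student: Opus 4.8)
The plan is to re-run the proof of Proposition \ref{theta1} essentially verbatim, since the statement of the corollary is identical except that characteristic zero is dropped and generic finiteness of $alb$ is imposed as a hypothesis. The key observation is that exactly one step of the proof of Proposition \ref{theta1} used characteristic zero: the appeal to Proposition \ref{finiteness} at the very end, which supplied the generic finiteness of $alb$ and which genuinely rests on Koll\'ar's vanishing and splitting theorems (cf.\ Remark \ref{alg}). In the present corollary generic finiteness is assumed outright in (b), so this input is no longer needed. Thus my task is not to find a new argument but to verify that every remaining ingredient is algebraic and valid over an arbitrary algebraically closed field.

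Concretely, I would first check that the opening reduction $gv(\omega_X)\ge 1$ is characteristic-free. This is immediate from the definition together with (a): since each $V^i(\omega_X)$ with $i>0$ is finite (and $V^d(\omega_X)\ni\hat 0$ by Serre duality, which holds in any characteristic), one has $\codim_{\Pico X}V^i(\omega_X)=q$ for all relevant $i$, so $gv(\omega_X)=\min_{0<i\le d}\{q-i\}=q-d\ge 1$ by (b). Crucially this computation does \emph{not} invoke the Green--Lazarsfeld Generic Vanishing theorem; it only uses the assumed finiteness of the loci. Hence $\omega_X$ is a $GV$-sheaf and Theorem \ref{GV1}, which is algebraic by Remark \ref{alg}, yields that $\widehat{\OO_X}$ is torsion-free; combined with the rank formula (\ref{rank}) and (c) this gives $\widehat{\OO_X}=\I_Z\otimes L$. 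I would then reproduce the local computation: base change forces $\supp Z\subseteq\bigcup_{i>0}V^i(\omega_X)$, hence $Z$ is finite, and Proposition \ref{appendix} together with Corollary \ref{duality} (both algebraic) gives (\ref{ext}), whence $d=q-1$ and $\OO_Z=\CC(\hat 0)$, i.e.\ $\widehat{\OO_X}=\I_{\hat 0}\otimes L$. From here Proposition \ref{composition} and Mukai's Inversion Theorem \ref{mukai} produce (\ref{inversion}) and (\ref{princ}), and the sequence (\ref{stan}) gives (\ref{basta!}); the ampleness of $L$ (because $R^0\Psi_{\mathcal P}(L)$ is supported on all of $\Alb X$), the vanishing of the higher $R^i\Psi_{\mathcal P}(L)$, the identity $h^0(L)=1$, and the identification $R^0\Psi_{\mathcal P}(L)\cong L^{-1}$ via \cite[Prop. 3.11(1)]{mukai} are all Fourier--Mukai statements on an abelian variety and remain valid over any algebraically closed field. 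This yields $alb_*\OO_X=\OO_D$ with $D\in|L|$ a principal polarization.

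The one place where I would be careful is the final conclusion that $alb$ is a birational morphism onto $D$: in Proposition \ref{theta1} this used $alb_*\OO_X=\OO_D$ \emph{together with} the generic finiteness coming from Proposition \ref{finiteness}. Here generic finiteness is instead read off directly from hypothesis (b), and then $alb_*\OO_X=\OO_D$ forces $alb$ to have degree one onto its image $D\in|L|$, so $alb$ is birational onto the theta-divisor $D$. There is no genuine obstacle here, only a verification to carry out: that Proposition \ref{appendix} of the Appendix and the implication ``$R^0\Psi_{\mathcal P}(L)$ supported everywhere $\Rightarrow$ $L$ ample'' are indeed characteristic-free. Granting Remark \ref{alg} and the standard theory of polarizations on abelian varieties, both hold, and the entire argument goes through in arbitrary characteristic with the single characteristic-zero ingredient replaced by an explicit hypothesis.
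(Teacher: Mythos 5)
Your proposal is correct and is essentially identical to the paper's own justification: the paper derives Corollary \ref{charp} precisely by observing that the proof of Proposition \ref{theta1} is entirely algebraic except for the appeal to Proposition \ref{finiteness} (cf.\ Remark \ref{alg}), whose conclusion is now supplied by hypothesis (b). Your additional verifications (that $gv(\omega_X)\ge 1$ follows from (a) and (b) alone, and that the Fourier--Mukai steps are characteristic-free) are exactly the checks the paper implicitly relies on.
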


\section{Birationality criterion}
First we review   the necessary background about  the notion of \emph{continuous global generation}. Then we apply such machinery to the canonical bundle of a variety, under some hypothesis which apply to the main theorem of the present paper. Finally,  by applying the same machinery to the ideal sheaf of a  point twisted by the canonical line bundle, we prove the birationality criterion we are aiming for, i.e. Theorem \ref{birprov} and its Corollary \ref{birdef}.
\subsection{Continuous global generation}
 The notion of \emph{continuous global generation} is a useful notion related to global generation. It was introduced by Pareschi-Popa in \cite{pp1} and applied to various geometric problems in \cite{pp1,pp2,pp5}. See also the survey \cite{pp3}.

  Let $\F$ be a coherent sheaf on $X$, and let $T$ be a subset of ${\rm Pic}^0A$. Then we have the \emph{continuous evaluation map associated to the pair $(\F,T)$}:
$$ev_{T,\F}:\bigoplus_{\alpha\in T}H^0(X,\F\otimes a^*\alpha^{-1})\otimes a^*\alpha\rightarrow \F$$

\begin{deff}\label{CB}
 Let $p$ be a point of $X$. The sheaf $\F$ is said to be
\emph{continuously globally generated at $p$ (CGG at $p$ for short), with respect to the morphism  $a$}, if the map $ev_U$ is surjective \emph{at $p$}, for all non-empty Zariski open subsets $U\subseteq \Pico A$. When possible, we will omit the reference to the morphism $a$, and say simply $CGG$ at $p$.
\end{deff}

\begin{rem} \label{useful}
 If $L$ is a line bundle, then $L$ is not $CGG$ at $p$ if and only if there us a non-empty Zariski open set $V\subseteq  \Pico A$ such that $p$ is  a base point of $L\otimes a^*\alpha^{-1}$ for all $\alpha\in V$. This is equivalent to the fact that $p$ is a base point of $L\otimes a^*\alpha^{-1}$ for all $\alpha\in\Pico A$ such that $h^0(L\otimes a^*\alpha^{-1})$ is minimal.
\end{rem}

We will also need the following weaker version. This is in fact a variant of the \emph{weak continuous global generation} of \cite{pp2}. However, for technical reasons we prefer to give the following definition, which is natural in view of Proposition \ref{CGG-GG}(b) and Theorem \ref{regularity}(b) below.

\begin{deff}\label{ACB}
Let $p$ be a point of $X$ and let $\F$ be a GV-sheaf. Let $\widehat{R\Delta(\F)}$ be the transform of the dual of $\F$ and let $\tau=\tau(\widehat{R\Delta(\F)})$ be its torsion sheaf. Then $\F$ is said to be
\emph{essentially continuously globally generated at $p$ (ECGG at $p$ for short), with respect to the morphism  $a$}, if  the map $ev_T$ is surjective \emph{at $p$}, for all subsets of the form $T=U\cup S$, where $U$ is a non-empty Zariski open subset of $\Pico A$ and $S$ is the underlying subset of $\supp \tau$. As above, when possible, we will omit the reference to the morphism $a$, and say simply $ECGG$ at $p$.
\end{deff}

Obviously $CGG$ at $p$ implies $ECGG$ at $p$. Moreover,
we will say simply that a sheaf is $CGG$ (resp. $ECGG$), when it is $CGG$ (resp. $ECGG$) for all $p$.

The structure sheaf of an abelian variety $X$ is an easy example of a line bundle which is not $CGG$ at any point, since for any open subset $U$ of $\Pico X$ not containing the identity point $\hat 0$, the map $ev_{U,\OO_X}$ is zero, but it is $ECGG$. In fact it is well known that in this case $\widehat{\OO_X}=\CC (\hat 0)$ (\cite{mukai}). Hence the underlying subset of the support of $\tau(\widehat{\OO_X})=\CC (\hat 0)$ is the identity point $\{\hat 0\}$, and all evaluation maps $ev_{U\cup\{\hat 0\},\OO_X}$ are trivially surjective at all points. A generalization of this example is provided by Corollary \ref{omega-acgg}(b) below.

 A useful relation between continuous global generation and the usual \emph{global generation}  is  provided by the following, where, given a line bundle $M$, $\Bs(M)$ will denote its base locus.

\begin{prop}[{\cite[Prop. 2.4]{pp2}}]\label{CGG-GG}
Let $\F$ and $L$ be respectively a coherent sheaf and a line bundle on $X$, and let $p$ be a point of $X$. \\
(a) If both $\F$ and \ $L$ are continuously globally generated at $p$ then $\F\otimes L\otimes a^*\beta$ is globally generated at $p$ for any $\beta\in \Pico A$.\\
(b) Assume that $L$ is $CGG$ at $p$ and that $\F$ is $ECGG$ at $p$. Let $\tau(\widehat{R\Delta(\F)})$ be the torsion sheaf of $\widehat{R\Delta(\F)}$, and assume that the underlying set $S$ of the support of $\tau(\widehat{R\Delta(\F)})$ is finite. For any  $\beta\in \Pico A$, if $p\not\in \bigcup_{\alpha\in S_{+\beta}} \Bs(L\otimes a^*\alpha),$ then $\F\otimes L\otimes a^*\beta$ is globally generated at $p$.
\end{prop}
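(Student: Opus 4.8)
The plan is to prove Proposition \ref{CGG-GG}(b) by reducing the failure of global generation of $\F\otimes L\otimes a^*\beta$ at $p$ to the failure of surjectivity of a suitable continuous evaluation map, and then extracting a contradiction from the $ECGG$ hypothesis. The starting observation is the same as in part (a): global generation of a product $\F\otimes L$ at $p$ can be tested by first continuously globally generating $\F$ by sections twisted by $a^*\alpha$, and then using that $L\otimes a^*\alpha$ (for the \emph{same} $\alpha$) is globally generated at $p$ to turn those twisted generators of $\F$ into honest global sections of $\F\otimes L$. The role of the $ECGG$ hypothesis is that we are not allowed to avoid the finitely many torsion points $\alpha\in S$ when we choose the twists; so the argument of part (a), which freely shrinks to an open $U$, must be modified to tolerate the extra points of $S$.

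Concretely, I would proceed as follows. First I would fix the set $T=U\cup S$, where $U$ is a dense open subset of $\Pico A$ to be chosen, and $S$ is the (finite) underlying set of $\supp\tau(\widehat{R\Delta(\F)})$. By the $ECGG$ hypothesis at $p$, the map $ev_{T,\F}$ is surjective at $p$, i.e. the images of the subspaces $H^0(X,\F\otimes a^*\alpha^{-1})\otimes a^*\alpha$ for $\alpha\in T$ generate the stalk $\F_p$ (modulo $\mathfrak m_p\F_p$). Now I want to multiply each such local section by a section of $L\otimes a^*\beta\otimes a^*\alpha$ that is nonvanishing at $p$, so as to land in $H^0(\F\otimes L\otimes a^*\beta)$ and generate $(\F\otimes L\otimes a^*\beta)_p$. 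For the contribution coming from $\alpha\in U$: since $L$ is $CGG$ at $p$, after shrinking $U$ we may assume $p\notin\Bs(L\otimes a^*(\beta+\alpha))$ for every $\alpha\in U$ (this is exactly Remark \ref{useful}, rephrased: being $CGG$ at $p$ means $p$ is not a base point of $L$ twisted by a general point, and translation by $\beta$ preserves this genericity). For the finitely many extra points $\alpha\in S$: the contribution of $\alpha$ to $ev_{T,\F}$ at $p$ is usable only if we can find a section of $L\otimes a^*(\beta+\alpha)=L\otimes a^*(\alpha')$ with $\alpha'=\beta+\alpha\in S_{+\beta}$ that does not vanish at $p$ — and this is precisely guaranteed by the hypothesis $p\notin\bigcup_{\alpha'\in S_{+\beta}}\Bs(L\otimes a^*\alpha')$. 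Thus every local generator of $\F_p$ supplied by $ev_{T,\F}$ can be promoted, after multiplication by a section of $L\otimes a^*\beta$ nonvanishing at $p$, to a global section of $\F\otimes L\otimes a^*\beta$ generating the corresponding direction of the stalk. Putting these together shows that $\F\otimes L\otimes a^*\beta$ is globally generated at $p$.

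The one technical point deserving care — and the main obstacle — is the bookkeeping of the twists, i.e. making sure the characters match up so that the product of a section of $\F\otimes a^*\alpha^{-1}$ with a section of $L\otimes a^*\beta\otimes a^*\alpha$ indeed lands in $\F\otimes L\otimes a^*\beta$ with no leftover twist, and that the same index $\alpha$ is used for both factors. This is the step where part (a) simply invokes density of $U$, but here the presence of $S$ forces us to handle the torsion points one at a time, and it is essential that $S$ is finite (so that $S_{+\beta}$ is finite and the finite union of base loci $\bigcup_{\alpha'\in S_{+\beta}}\Bs(L\otimes a^*\alpha')$ is a genuine proper condition on $p$, not all of $X$). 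Once the twist bookkeeping is set up correctly, the conclusion follows formally from the definition of $ECGG$ together with the two non-vanishing inputs (genericity from $CGG$ of $L$ on the open part, and the explicit hypothesis on $p$ over $S_{+\beta}$). I expect this to be a short argument, essentially a refinement of the cited proof of \cite[Prop. 2.4]{pp2} for part (a), with the novelty confined to the treatment of the finite torsion locus.
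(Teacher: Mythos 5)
Your proposal is correct and follows essentially the same route as the paper's own proof: apply the $ECGG$ hypothesis to a continuous evaluation set of the form (dense open) $\cup\, S$, use Remark \ref{useful} to make $L$ twisted by the open part free of base points at $p$, and use the stated hypothesis $p\not\in \bigcup_{\alpha\in S_{+\beta}}\Bs(L\otimes a^*\alpha)$ to handle the finitely many torsion points. The only cosmetic difference is where the twist by $\beta$ is placed (you put it on the $L$ factor and index by $U\cup S$; the paper puts it on the $\F$ factor and indexes by $V_p\cup S_{+\beta}$), which amounts to the same bookkeeping.
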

\begin{proof} (a) By Remark \ref{useful} there is an open subset $V_p$ of $\Pico A$ such that $p$ is not a base point of $L\otimes a^*\alpha$ for all $\alpha\in V_p$, i.e. the evaluation map at $p$: $H^0(L\otimes a^*\alpha)
\rightarrow  (L\otimes a^*\alpha)_p$ is surjective for all $\alpha\in V_p$. Since $\F$ is $CGG$ at $p$, it follows that the map
$$ev_{V_p}:\oplus_{\alpha\in V_p}H^0(\F\otimes a^*\beta\otimes a^*\alpha^{-1})\otimes H^0(L\otimes a^*\alpha)\rightarrow (\F\otimes L\otimes a^*\beta)_{p}$$ is surjective. This proves the assertion, since the above map factors trough $H^0(\F\otimes L\otimes a^*\beta)$.\\
(b) The proof is the same with the difference that now if we use the continuous evaluation map $ev_{T_p}$ with $T_p=V_p\cup (S_{+\beta})$. If $p\not\in \bigcup_{\alpha\in S_{+\beta}} \Bs(L\otimes a^*\alpha)$, then  the evaluation map $H^0(L\otimes a^*\alpha) \rightarrow (L\otimes a^*\alpha)_p$ is surjective also for all $\alpha\in S_{+\beta}$.
\end{proof}

 As for the usual global generation, in many applications it is useful to have a criterion ensuring that, if  the higher cohomology of a given sheaf $\F$ satisfies certain vanishing conditions,  then $\F$ is $CGG$ or $ECGG$. The following  criterion, which applies to \emph{sheaves on abelian varieties},
 is due to Pareschi-Popa. In the reference a sheaf $\F$ on an abelian variety $A$ such that $gv(\F)\ge 1$ is called \emph{$M$-regular}. The first part is of the following theorem is \cite[Prop. 2.13]{pp1} or \cite[Cor. 5.3]{pp5}. The proof of the second part essentially follows the proof of \cite[Thm. 4.1]{pp2}.

\begin{thm}\label{regularity}  Let $\F$ be a sheaf on an abelian variety $A$.\\
\noindent (a) If $gv(\F)\ge 1$ then $\F$ is $CGG$.\\
\noindent (b) If $\F$ is a GV-sheaf and $\supp \tau(\widehat{R\Delta(\F)})$ is a reduced scheme\footnote{We consider the annihilator support, instead of the Fitting support \cite{eis}.}, then $\F$ is $ECGG$.
\end{thm}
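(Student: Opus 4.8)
For part (a) I would follow Pareschi--Popa and treat the statement as essentially known. The assumption $gv(\F)\ge 1$ is exactly $M$-regularity, and by Theorem \ref{GV1} it is equivalent to torsion-freeness of $\widehat{R\Delta\F}$. The mechanism is that an $M$-regular sheaf is globally generated already after twisting by finitely many general points $\alpha_1,\dots,\alpha_N\in\Pico A$; since enlarging the index set of a continuous evaluation map only enlarges its image, and since general points may be chosen inside any prescribed non-empty Zariski open $U$, surjectivity of $ev_U$ at every $p$ follows, i.e.\ $CGG$. The generation-by-general-twists input is the content of \cite[Prop.~2.13]{pp1} (equivalently \cite[Cor.~5.3]{pp5}), proved through the Fourier--Mukai transform and preservation of vanishing, so I would simply invoke it.

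The real work is part (b), which I would prove by adapting \cite[Thm.~4.1]{pp2} to the present setting. Since $\F$ is $GV$, Theorem \ref{GV} gives that $\widehat{R\Delta\F}$ is a genuine sheaf (concentrated in a single cohomological degree), and I consider its torsion filtration
\[
0\rightarrow \tau\rightarrow \widehat{R\Delta\F}\rightarrow \mathcal G\rightarrow 0,
\]
where $\tau=\tau(\widehat{R\Delta\F})$ has support $S$, assumed reduced, and $\mathcal G$ is torsion-free. By Theorem \ref{GV1} the torsion $\tau$ is precisely the obstruction to $M$-regularity: were it absent, $\F$ would be $M$-regular and hence $CGG$ by part (a). The idea is therefore to re-run the evaluation argument of part (a) for the enlarged index set $T=U\cup S$, using Mukai's equivalence (Theorem \ref{mukai}) together with Corollary \ref{duality} to translate surjectivity of $ev_T$ at $p$ into a statement about $\widehat{R\Delta\F}$ on $\Pico A$. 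The torsion-free quotient $\mathcal G$ is handled exactly as in part (a) and contributes the generation by $U$-twisted sections, while the torsion $\tau$, supported on the finite reduced set $S$, transforms back to a direct sum of the rank-one line bundles $\{\alpha\}_{\alpha\in S}$, whose sections are captured precisely by the extra $S$-twists. Adjoining $S$ to $U$ then makes $ev_T$ surjective at every $p$, which is exactly $ECGG$.

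The main obstacle is the bookkeeping of the torsion contribution. One must show that reducedness of the (annihilator) support $S$ forces $\tau$ to re-transform into an honest sum of line bundles indexed by the points of $S$, with no nilpotent thickening spoiling the count, so that adjoining the single finite set $S$ to $U$ genuinely suffices. This is precisely where the reducedness hypothesis enters, and why one must work with the annihilator support rather than the Fitting support (cf.\ the footnote): a non-reduced $\tau$ would require infinitesimal data at the points of $S$ that a finite evaluation index set cannot see. Concretely, the delicate step is re-running the surjectivity estimates of \cite[Thm.~4.1]{pp2} with the open set $U$ replaced by $U\cup S$ and checking that the contribution of $\tau$ to the evaluation map at $p$ factors through the rank-one pieces attached to the reduced points of $S$.
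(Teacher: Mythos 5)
Your part (a) is fine: like the paper, one can simply quote \cite[Prop. 2.13]{pp1}, and the paper's own one-line proof is just the observation that when $\widehat{R\Delta\F}$ is torsion-free the evaluation map $\psi_{U,\F}$ of Lemma \ref{torsion} is injective for every dense open $U$. For part (b) you have the right architecture --- adapt \cite[Thm. 4.1]{pp2}, enlarge the index set to $T=U\cup S$, and locate the role of reducedness in the torsion contribution --- but the one concrete mechanism you propose for handling $\tau$ is not the one that closes the argument, and you yourself flag it as the unproved ``delicate step''. Saying that $\tau$ ``transforms back to a direct sum of the rank-one line bundles indexed by the points of $S$'' (i) presupposes that $S$ is finite, which is not assumed in the statement of the theorem; (ii) even for finite $S$ requires the (true but unstated) observation that reducedness of the annihilator support forces $\tau$ to be a direct sum of skyscrapers of residue fields; and (iii) most importantly, does not by itself say anything about surjectivity of $ev_{T,\F}$ at a point of $X$: you still need a bridge identifying the cokernel of the continuous evaluation map with an object computed from $\widehat{R\Delta\F}$ on $\Pico A$.

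That bridge is the actual content of the paper's proof (Lemma \ref{torsion}): for $L$ ample and $n\gg 0$, Serre vanishing, Serre duality and Mukai's equivalence give a canonical isomorphism $H^0(\coker ev_{T,\F}\otimes L^n)\cong(\ker\psi_{T,\F})^*$, where $\psi_{T,\F}$ is the evaluation of global sections of the base-change-friendly sheaf ${\mathcal H}om(\widehat{L^n},\widehat{R\Delta\F})$ at the points of $T$. With this in hand, (b) is a short Nakayama-type argument carried out entirely on the $\Pico A$ side: a global section of ${\mathcal H}om(\widehat{L^n},\widehat{R\Delta\F})$ vanishing at all points of the dense open $U$ must land in the torsion subsheaf ${\mathcal H}om(\widehat{L^n},\tau)$; since the annihilator support of $\tau$ is reduced and $T\supseteq S$ contains every point of it, vanishing at the points of $S$ then forces the section to be zero. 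Hence $\psi_{T,\F}$ is injective, $H^0(\coker ev_{T,\F}\otimes L^n)=0$, and Serre's theorem gives $\coker ev_{T,\F}=0$, i.e. $\F$ is ECGG. So the missing ingredient in your write-up is precisely this duality between $\coker ev_{T,\F}$ and $\ker\psi_{T,\F}$; once it is in place, no inverse transform of $\tau$ is needed, and the reducedness hypothesis enters exactly where your intuition says it should, namely in ruling out infinitesimal behaviour of $\tau$ invisible to evaluation at the underlying points of its support.
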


The main point is the following

\begin{lem}\label{torsion} Let $\F$ be a GV-sheaf on an abelian variety $A$. Let $\widehat{R\Delta(\F)}$ be the transform of the dual of $\F$ and let $\tau(\widehat{R\Delta(\F)})$ be its torsion sheaf.  Let $L$ be an ample line bundle on $A$. Then, for all sufficiently high $n\in\NN$, and for any subset $T\subseteq \Pico A$, the Fourier-Mukai transform $\Phi_{\mathcal P}$ induces a canonical isomorphism
$$H^0(A,\coker ev_{T,\F}\otimes L^n)\cong (\ker \psi_{T,\F})^*,$$
where $\psi$ is the natural evaluation map,
\begin{equation}\label{eq:psi}
\psi_{T,\F}:{\rm Hom} (\widehat{L^n},\widehat{R\Delta\F})\rightarrow\prod_{\alpha\in T} {\mathcal H}om (\widehat{L^n},\widehat{R\Delta\F})\otimes \CC(\alpha).
\end{equation}
\end{lem}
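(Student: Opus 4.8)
The plan is to turn the coboundary structure of the continuous evaluation map $ev_{T,\F}$ into a statement purely on the Picard side via the Fourier--Mukai transform, and then dualize. First I would recall that for a sheaf $\F$ on $A$, twisting by the characters $a^*\alpha$ and taking global sections is computed by the transform: by Mukai's theory $H^0(A,\F\otimes P_\alpha)$ is the fiber $\widehat{R\Delta\F}\otimes\CC(\alpha)$ up to duality (this is exactly the base-change description recalled in Terminology \ref{gv}, using that $\F$ is $GV$). Thus the summand $H^0(X,\F\otimes a^*\alpha^{-1})\otimes a^*\alpha$ of $ev_{T,\F}$ corresponds, after transform, to a map involving the fiber of $\widehat{R\Delta\F}$ at $\alpha$. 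The first key step is therefore to identify $R\Phi_{\mathcal P}$ (or $R\Psi$) applied to the source and target of $ev_{T,\F}$, so that $\Phi_{\mathcal P}(ev_{T,\F})$ becomes, up to the identifications above, a map assembled from the evaluations $\widehat{L^n}\to \widehat{R\Delta\F}\otimes\CC(\alpha)$ running over $\alpha\in T$. Concretely the direct sum $\bigoplus_{\alpha\in T} H^0(\F\otimes a^*\alpha^{-1})\otimes a^*\alpha$ transforms into something built from copies of $\widehat{L^n}$ (one must twist by the ample $L^n$ precisely to make the skyscraper-type pieces $P_\alpha$ into honest transforms $\widehat{L^n}$ of ample bundles, which is why ``$n$ sufficiently high'' enters).

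The second step is to feed the exact sequence
\begin{equation*}
\bigoplus_{\alpha\in T}H^0(X,\F\otimes a^*\alpha^{-1})\otimes a^*\alpha \xrightarrow{\,ev_{T,\F}\,}\F\rightarrow \coker ev_{T,\F}\rightarrow 0
\end{equation*}
through $\Phi_{\mathcal P}$ after twisting by $L^n$, and to control the cohomology. For $n\gg 0$ the ample twist kills all higher transforms of the terms involved (Mukai/Serre vanishing), so that $\Phi_{\mathcal P}$ applied to the twisted sequence computes $H^0(A,\coker ev_{T,\F}\otimes L^n)$ as the cokernel, equivalently the appropriate kernel/cokernel term, of the transformed evaluation. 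Matching the transformed map with $\psi_{T,\F}$ of \eqref{eq:psi} is the crux: the natural evaluation
\begin{equation*}
\psi_{T,\F}:{\rm Hom}(\widehat{L^n},\widehat{R\Delta\F})\rightarrow\prod_{\alpha\in T}{\mathcal H}om(\widehat{L^n},\widehat{R\Delta\F})\otimes\CC(\alpha)
\end{equation*}
records exactly which homomorphisms $\widehat{L^n}\to\widehat{R\Delta\F}$ vanish at every point of $T$, and this is precisely the transpose of the assembled continuous evaluation. Then Serre-type duality on $\Pico A$ (Grothendieck duality, using that $L^n$ is ample and $\widehat{L^n}$ is the transform of an ample line bundle on the dual) yields the claimed isomorphism $H^0(A,\coker ev_{T,\F}\otimes L^n)\cong(\ker\psi_{T,\F})^*$.

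The main obstacle I expect is the careful bookkeeping in the identification $\Phi_{\mathcal P}(ev_{T,\F})\simeq \psi_{T,\F}^{\vee}$: one must match the universal-coefficient/base-change identifications (fibers of $\widehat{R\Delta\F}$ versus $H^0$ of twists), keep track of the sign conventions $(-1)^*$ coming from $\mathcal P^{-1}\cong(1\times(-1))^*\mathcal P$, and verify that passing from the direct sum over $T$ on the $X$-side to the product over $T$ in \eqref{eq:psi} is compatible with the transform, including for the non-closed part of $T$ (the Zariski-open $U$ contributes the generic rank, the finite part of $\supp\tau$ the skyscraper pieces). The ``sufficiently high $n$'' must be chosen uniformly, so I would also make explicit that the vanishing needed is independent of the subset $T$, which follows because it only involves the fixed sheaves $\F$ and $\widehat{R\Delta\F}$ and the fixed ample $L$. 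Once these identifications are pinned down, the statement is a formal consequence of Mukai's equivalence and duality.
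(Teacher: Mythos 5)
Your plan follows the paper's proof essentially step for step: reduce $H^0(\coker ev_{T,\F}\otimes L^n)$ to the cokernel of the continuous multiplication map $m^T_{\F,L^n}$ via Serre vanishing (with the uniformity in $T$ you rightly flag), dualize, and identify the dual map with $\psi_{T,\F}$ using Mukai's equivalence ${\rm Ext}^q_A(L^n,R\Delta\F)\cong{\rm Hom}(\widehat{L^n},\widehat{R\Delta\F})$ together with the base-change descriptions of the fibers of $\widehat{L^n}$ and $\widehat{R\Delta\F}$. The only slip is cosmetic: the duality converting the cokernel into a kernel is Serre duality on $A$ applied to $m^T_{\F,L^n}$ (the passage to sheaves on $\Pico A$ is Mukai's equivalence, not a duality performed there), which does not affect the argument.
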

\begin{proof} Let $T\subseteq \Pico A$ be any subset. The map $H^0(ev_T\otimes L^n)$ is the ``continuous multiplication map of global sections'':
$$m^T_{\F,L^n}: \bigoplus_{\alpha\in T}H^0(\F\otimes a^*\alpha^{-1})\otimes H^0(L^n\otimes a^*\alpha)\rightarrow H^0(\F\otimes L^n).$$
A standard argument with Serre vanishing  shows that, if $n$ is big enough,
\begin{equation*}H^0(\coker (ev_{T,\F})\otimes L^n)\cong \coker (m^T_{\F,L^n})
\end{equation*}
By Serre duality, the dual of $m^T_{\F,L^n}$ is
\begin{equation}\label{dual}
{\rm Ext}^q(L^n,R\Delta\F)\rightarrow \prod_{\alpha\in T} {\rm Hom}_{\CC}\bigl(H^0(L^{n}\otimes a^*\alpha), H^q((R\Delta\F)\otimes a^*\alpha)\bigr)
\end{equation}
Let us interpret such map via the Fourier-Mukai transform. Concerning the source,  Mukai's Theorem \ref{mukai}  provides the isomorphism
$${\rm Hom}_{{\bf D}(A)}(L^n,R\Delta\F[q])\cong
{\rm Hom}_{{\bf D}(\Pico A)}(R\Phi_{\mathcal P}(L^n),R\Phi_{\mathcal P}(R\Delta\F)[q])\cong {\rm Hom}_{{\bf D}(\Pico A)}(\widehat{L^n},\widehat{R\Delta\F})$$
(here $q=\dim A$) i.e., since $\widehat{L^n}$ is locally free,
\begin{equation}\label{perseval}
{\rm Ext}^q_A(L^n,R\Delta\F)\cong {\rm Hom}_{\Pico A}(\widehat{L^n},\widehat{R\Delta\F}).
\end{equation}
Note that, besides $\widehat{L^n}$, also $\widehat{R\Delta(\F)}$ has the base change property, since $H^{d+1}(\F\otimes a^*\alpha)=0$ for all $\alpha\in\Pico A$, see \cite[Cor. 3, p. 53]{ab}.  This means that, in the target of the map (\ref{dual}), we have that  $H^0(L^{n}\otimes a^* \alpha)$ (respectively $ H^q((R\Delta\F)\otimes a^*\alpha))$) are isomorphic to the fiber at the point $\alpha$ of  $\widehat{L^{n}}$ (resp.  of $ \widehat{R\Delta\F}$). Hence   also the sheaf ${\mathcal H}om (\widehat{L^n},\widehat{R\Delta\F})$ has the base change property
and the Fourier-Mukai isomorphism (\ref{perseval}) identifies the map (\ref{dual}) to the evaluation map of the sheaf ${\mathcal H}om (\widehat{L^n},\widehat{R\Delta\F})$ at points in $T$:
$${\rm Hom} (\widehat{L^n},\widehat{R\Delta\F})\rightarrow\prod_{\alpha\in T} {\mathcal H}om (\widehat{L^n},\widehat{R\Delta\F})\otimes \CC(\alpha).$$\end{proof}

Now we are ready to proof the Theorem
\begin{proof}[Proof of Theorem \ref{regularity}] (a) By Theorem \ref{GV} we can assume that $\widehat{R\Delta \F}$ is torsion free. Then the evaluation map $\psi_{U,\F}$ is injective for \emph{all} open subsets $U\subseteq \Pico A$, so $H^0(\coker ev_{U,\F}\otimes L^n)=0$ for $n\gg0$. From Serre's theorem it follows that $\coker ev_{U,\F}=0$. \\

\noindent (b) By Nakayama's Lemma, given a non-zero global section $s\in {\rm Hom} (\widehat{L^n},\widehat{R\Delta\F})$, we have that $s(\alpha)\in {\mathcal H}om (\widehat{L^n},\widehat{R\Delta\F})\otimes \CC(\alpha)$ vanishes for all $\alpha$ in an dense subset $T\subset \Pico A$, only if $T$ does not meet a component of the support of the torsion part $\tau(\widehat{R\Delta(\F)})$ or this support is non-reduced. The second possibility is excluded by hypothesis.
And the first possibility is excluded if we consider subsets of the form $T=U\cup S$, where $S$ is the underlying set of $\supp \tau(\widehat{R\Delta(\F)})$. Hence, the map $\psi_{T,\F}$ is injective. Therefore $H^0(\coker ev_{T,\F}\otimes L^n)=0$ and, by Serre's theorem, $\coker ev_{T,\F}=0$.
\end{proof}

\subsection{The canonical bundle}
Throughout the rest of the present section we will work under the following
\begin{hyp}\label{hyp}
 Let $X$ be a variety of dimension $d$, equipped with a morphism to an abelian variety  \ $a:X\rightarrow A$ \  such that:\\
 (a) $\codim V^i_a(\omega_X)\ge i+1$ for all $i$ such that $0<i<d$ \emph{(note that this hypothesis is weaker than $gv(\omega_X)\ge 1$)};\\
 (b) the map $a^*\Pico A\rightarrow \Pico X$ is an embedding.
 \end{hyp}

   Hypothesis (b) yields that $V^d_a(\omega_X)=\{\hat 0\}$, where $\hat 0$ denotes the identity point of $\Pico A$. Moreover Hypothesis (a) is more than enough to ensure that $gv_a(\omega_X)\ge 0$. Therefore, by Theorem \ref{GV}, the transform of $R\Delta(\omega_X)=\OO_X$ is the sheaf $\widehat{\OO_X}$ concentrated in degree $d$. Moreover  the morphism $a$ is generically finite by Proposition \ref{finiteness}. We remark that Hypothesis (a) is equivalent to $gv_a(\omega_X)\ge 1$ unless $\dim X=\dim A$, i.e. the morphism $a$ is surjective. Therefore, by Theorem \ref{GV1}, the sheaf $\widehat{\OO_X}$ has torsion if and only if $\dim X=\dim A$, i.e. the map $a:X\rightarrow A$ is surjective.

\begin{prop}\label{trace} Under Hypotheses \ref{hyp}, assume moreover that $\dim X=\dim A$, i.e. that the morphism $a$ is surjective. Then the torsion $\tau(\widehat{\OO_X})$ is isomorphic to the one-dimensional skyscraper sheaf $\CC(\hat 0)$
\end{prop}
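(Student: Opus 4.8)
The plan is to identify the torsion of $\mathcal{G}:=\widehat{\OO_X}$ by combining the local $\mathcal{E}xt$-description of the transform (Corollary \ref{duality}) with the codimension bounds built into Hypothesis \ref{hyp}(a). The inputs I would record first are: by Corollary \ref{duality}, for every $i$ one has $\mathcal{E}xt^i_{\OO_{\Pico A}}(\mathcal{G},\OO_{\Pico A})\cong(-1_{\Pico A})^*R^i\Phi_{P_a}(\omega_X)$, whose support lies in $(-1_{\Pico A})^*V^i_a(\omega_X)$ by base change. Hence, by Hypothesis \ref{hyp}(a), this sheaf is supported in codimension $\ge i+1$ for $0<i<d$, while for $i=d$ Proposition \ref{appendix} together with $V^d_a(\omega_X)=\{\hat 0\}$ (a consequence of Hypothesis \ref{hyp}(b)) gives $\mathcal{E}xt^d(\mathcal{G},\OO_{\Pico A})\cong(-1_{\Pico A})^*R^d\Phi_{P_a}(\omega_X)\cong\CC(\hat 0)$, exactly as in (\ref{ext}).

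Next I would show that the torsion subsheaf $\tau=\tau(\mathcal{G})$ is concentrated at points. Write $0\to\tau\to\mathcal{G}\to\mathcal{Q}\to 0$ with $\mathcal{Q}$ torsion-free, and suppose $\tau\ne 0$; set $c=\codim\supp\tau$, so $c\ge 1$. Since $\mathcal{Q}$ is torsion-free on the smooth variety $\Pico A$ it is a first-syzygy sheaf, whence $\mathcal{E}xt^{j}(\mathcal{Q},\OO_{\Pico A})$ is supported in codimension $\ge j+1$ for every $j\ge 1$. Assume $0<c<d$. At a generic point $\eta$ of a codimension-$c$ component of $\supp\tau$ the stalk $\tau_\eta$ is a nonzero finite-length module over the regular local ring $\OO_{\Pico A,\eta}$ of dimension $c$, so $\mathcal{E}xt^c(\tau,\OO_{\Pico A})_\eta\ne 0$. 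On the other hand, the long exact sequence of $\mathcal{E}xt^\bullet(-,\OO_{\Pico A})$ gives $\mathcal{E}xt^c(\mathcal{G},\OO_{\Pico A})\to\mathcal{E}xt^c(\tau,\OO_{\Pico A})\to\mathcal{E}xt^{c+1}(\mathcal{Q},\OO_{\Pico A})$, and both outer terms vanish at $\eta$ (the first by Hypothesis \ref{hyp}(a), supported in codimension $\ge c+1$; the second by the syzygy bound, codimension $\ge c+2$). This forces $\mathcal{E}xt^c(\tau,\OO_{\Pico A})_\eta=0$, a contradiction. Hence $c=d$, i.e. $\tau$ has finite length.

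With $\tau$ of finite length, the case $j=d$ of the syzygy bound gives $\mathcal{E}xt^d(\mathcal{Q},\OO_{\Pico A})=0$, so the long exact sequence yields $\mathcal{E}xt^d(\tau,\OO_{\Pico A})\cong\mathcal{E}xt^d(\mathcal{G},\OO_{\Pico A})\cong\CC(\hat 0)$. Applying $\mathcal{E}xt^d(-,\OO_{\Pico A})$ once more, and using that on finite-length sheaves over the abelian variety $\Pico A$ (whose canonical sheaf is trivial) this functor is an involution, together with $\mathcal{E}xt^d(\CC(\hat 0),\OO_{\Pico A})\cong\CC(\hat 0)$, I obtain
$$\tau\cong\mathcal{E}xt^d\bigl(\mathcal{E}xt^d(\mathcal{G},\OO_{\Pico A}),\OO_{\Pico A}\bigr)=\mathcal{E}xt^d(\CC(\hat 0),\OO_{\Pico A})\cong\CC(\hat 0),$$
which is the assertion.

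I expect the main obstacle to be the middle step, namely controlling the dimension of $\supp\tau$. The delicate ingredients there are the purity input (that the top $\mathcal{E}xt$ of a nonzero finite-length module over a regular local ring is nonzero, so that a codimension-$c$ component of $\supp\tau$ genuinely registers in $\mathcal{E}xt^c(\tau,-)$) and the careful bookkeeping of codimensions along the long exact sequence; these are precisely the places where Hypothesis \ref{hyp}(a) and the first-syzygy property of the torsion-free quotient $\mathcal{Q}$ must be used in tandem.
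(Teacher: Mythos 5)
Your argument is correct, and it rests on exactly the same two pillars as the paper's proof: the identification $\mathcal{E}xt^i(\widehat{\OO_X},\OO_{\Pico A})\cong(-1_{\Pico A})^*R^i\Phi_{P_a}(\omega_X)$ from Corollary \ref{duality} together with the codimension bounds of Hypothesis \ref{hyp}(a), and the computation $R^d\Phi_{P_a}(\omega_X)\cong\CC(\hat 0)$ from Proposition \ref{appendix}. The difference is in the packaging. The paper runs the biduality spectral sequence $\mathcal{E}xt^i(\mathcal{E}xt^j(\widehat{\OO_X},\OO),\OO)\Rightarrow\mathcal{H}^{i-j}\widehat{\OO_X}$, identifies the torsion with $E^{d,-d}_\infty$, and obtains a \emph{surjection} $\CC(\hat 0)\twoheadrightarrow\tau(\widehat{\OO_X})$; it must then invoke the previously established fact that the torsion is nonzero (via Theorem \ref{GV1} and the surjectivity of $a$) to conclude. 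You instead unpack the same duality information through the short exact sequence $0\to\tau\to\widehat{\OO_X}\to\mathcal{Q}\to 0$, using the first-syzygy bound $\codim\supp\mathcal{E}xt^j(\mathcal{Q},\OO)\ge j+1$ for the torsion-free quotient and local duality for finite-length modules. This buys two small things: the intermediate step that $\supp\tau$ is finite is made explicit rather than absorbed into the spectral sequence degeneration, and you land directly on an \emph{isomorphism} $\tau\cong\mathcal{E}xt^d(\CC(\hat 0),\OO)\cong\CC(\hat 0)$, so the separate nonvanishing input is not needed. All the commutative-algebra facts you lean on (nonvanishing of the top $\operatorname{Ext}$ of a nonzero finite-length module over a regular local ring, biduality of $\mathcal{E}xt^d(-,\OO)$ on finite-length sheaves, the syzygy bound for torsion-free sheaves on a smooth variety) are standard and correctly applied, so the proof stands as written.
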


\begin{proof} The argument is  similar to the proof of Proposition 2.8 of \cite{pp5}.  Since $\Pico A$ is smooth, the functor
$R{\mathcal H}om(\> \cdot\>,\OO_X)$ is an involution on ${\bf D}(X)$.
Thus there is a spectral sequence
$$E^{i,-j}_2 := \mathcal{E}xt^i \Bigl((\mathcal{E}xt^j (\widehat{\OO_X}, \OO_{\Pico A}),\OO_{\Pico A}\Bigr)
\Rightarrow H^{i -j} = \mathcal{H}^{i-j} \widehat{\OO_X}  = \begin{cases}\widehat{\OO_X}
&\hbox{if $i= j$}\cr 0&\hbox{otherwise}\cr\end{cases}.$$ By duality (Corollary \ref{duality}), $\dim{\rm supp}({\mathcal Ext}^i\bigl(\widehat{\OO_X},\OO_{\Pico A})\bigr)=\dim\supp R^i\Phi_{P_a}(\omega_X)$ and, by base-change, $\supp R^i\Phi_{P_a}(\omega_X)\subseteq V^i_a(\omega_X)$. Thus Hypothesis \ref{hyp}(a) yields that for all $i$ such that $0<i<d$ we have $ {\rm
codim} ~\supp \bigl(\mathcal{E}xt^i(\widehat{\OO_X},\OO_{\Pico A})\bigr) > i $. Therefore
  $\mathcal{E}xt^i \Bigl(\mathcal{E}xt^j
(\widehat{\OO_X}, \OO_{\Pico A}),\OO_{\Pico A} \Bigr) = 0$ for all $(i,j)$ such that
$i - j \le 0$,  except for $(i,j)=(0,0)$ and $(i,j)=(d,d)$. Hence the only non-zero $E^{i,-i}_{\infty}$ terms are $E^{d,-d}_\infty$ and $E^{0,0}_{\infty}$, and we have the exact sequence
\begin{equation}\label{basic}
0\rightarrow E^{d,-d}_\infty\rightarrow H^0=\widehat{\OO_X}\rightarrow E^{0,0}_\infty\rightarrow 0
\end{equation}
The differentials coming into
$E^{0,0}_p$ are always zero, so we get an isomorphism,
$$ E^{0,0}_{\infty} \subseteq E^{0,0}_2=\widehat{\OO_X}^{**}$$
and (\ref{basic}) is identified to the canonical exact sequence
$$0\rightarrow \tau(\widehat{\OO_X})\rightarrow \widehat{\OO_X}\rightarrow \widehat{\OO_X}^{**}$$
Hence the torsion $\tau(\widehat{\OO_X})$ is canonically isomorphic to $E^{d,-d}_\infty$.
By Proposition \ref{appendix} of the Appendix, $R^d\Phi_{P_a}(\omega_X)\cong \CC(\hat 0)$. Hence $E^{d,-d}_2={\mathcal E}xt^d(R^d\Phi_{P_a}(\omega_X),\OO_{\Pico A})\cong {\mathcal E}xt^d(\CC(\hat 0),\OO_{\Pico A})=\CC(\hat 0)$. Since the differentials going out of $E^{d,-d}_p$ are always zero,  we get a surjection
$$E^{d,-d}_{2}\cong \CC(\hat 0)\rightarrow E^{d,-d}_\infty\cong\tau(\widehat{\OO_X})\rightarrow 0$$
As we already know that $\tau(\widehat{\OO_X})\ne 0$, it follows that $\tau(\widehat{\OO_X})\cong \CC(\hat 0)$.
\end{proof}

\begin{rem} It is easily seen that the injection $\CC (\hat 0)\hookrightarrow \widehat{a_*\OO_X}$ is $R^q\Phi_{\mathcal P}$ of the natural injection $\OO_A\rightarrow a_*\OO_X$.
\end{rem}

As a first consequence we get
\begin{prop}\label{lazarsfeld} The following are equivalent:\\
(a)  Hypothesis \ref{hyp} holds, and $X$ is not of general type;\\
(b) Hypothesis \ref{hyp} holds, and  $\chi(\omega_X)=0$; \\
(c) the morphism $a:X\rightarrow A$ is birational.
\end{prop}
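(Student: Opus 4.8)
The plan is to prove the three-way equivalence by exhibiting the implications (c) $\Rightarrow$ (b) $\Rightarrow$ (a) $\Rightarrow$ (c), since (c) is the strongest and most concrete condition. First I would dispose of (c) $\Rightarrow$ (b): if $a$ is birational, then since $a$ is generically finite and $A$ is an abelian variety, one has $a_*\OO_X = \OO_A$ (using that $X$ is smooth and $A$ is normal, so the birational morphism has connected fibers and the structure sheaf pushes forward trivially). Consequently $\chi(\omega_X) = \chi(\OO_X) = \chi(\OO_A) = 0$, the last equality because $A$ is an abelian variety. Hypothesis \ref{hyp} is assumed to hold in (a) and (b), so I need only check $\chi(\omega_X)=0$, which follows. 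The implication (b) $\Rightarrow$ (a) should be comparatively soft: under Hypothesis \ref{hyp}, by \eqref{rank} we have $\operatorname{rk} \widehat{\OO_X} = \chi(\omega_X)$, so $\chi(\omega_X)=0$ forces $\widehat{\OO_X}$ to be a torsion sheaf. By Proposition \ref{trace} (since $\chi(\omega_X)=0$ combined with the rank computation forces $a$ to be surjective, as a non-surjective $a$ would give $gv_a(\omega_X)\ge 1$, hence $\widehat{\OO_X}$ torsion-free of positive rank by Theorem \ref{GV1}), the torsion is exactly $\CC(\hat 0)$, so $\widehat{\OO_X}=\CC(\hat 0)$. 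Transforming back via Mukai's Theorem \ref{mukai} and Proposition \ref{composition} then recovers $R\,a_*\OO_X$, and comparing ranks and dimensions shows $X$ cannot be of general type (its canonical bundle has too little positivity); I would extract non-general-type directly from the structure of $a_*\OO_X$ forced by the inversion.

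The genuinely substantive implication is (a) $\Rightarrow$ (c): I must show that if Hypothesis \ref{hyp} holds and $X$ is not of general type, then $a$ is actually birational. The strategy is to run the Fourier-Mukai inversion in reverse, mirroring the argument in Proposition \ref{theta1}. Not being of general type means $\omega_X$ is not big, which on an m.A.d.\ variety (recall $a$ is generically finite by Proposition \ref{finiteness}) should force $\chi(\omega_X)=0$: the Euler characteristic is the generic value of $h^0(\omega_X \otimes a^*\alpha)$ by the $GV$ property, and if this were positive for general $\alpha$ one could leverage the continuous global generation machinery of \S4 (Theorem \ref{regularity}) to show $\omega_X$ separates points generically and is therefore big, contradicting non-general-type. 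This reduces (a) to the situation $\chi(\omega_X)=0$.

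Once $\chi(\omega_X)=0$, the rank formula \eqref{rank} gives $\operatorname{rk}\widehat{\OO_X}=0$, so $\widehat{\OO_X}$ is torsion; by Theorem \ref{GV1} this means $gv_a(\omega_X)$ is not $\ge 1$, hence (given Hypothesis \ref{hyp}(a)) the map $a$ must be surjective, i.e.\ $\dim X = \dim A$. Then Proposition \ref{trace} applies and yields $\widehat{\OO_X}=\tau(\widehat{\OO_X})\cong\CC(\hat 0)$. Applying $R\Psi_{\mathcal P}$ and invoking Mukai's inversion Theorem \ref{mukai} together with Proposition \ref{composition} gives
$$R\Psi_{\mathcal P}(\CC(\hat 0)) = (-1)^*_{\Pico A}\,R\,a_*\OO_X[-\dim A].$$
Since $R\Psi_{\mathcal P}(\CC(\hat 0))$ is the (shifted, translated) structure sheaf $\OO_A$ up to the sign involution, this identifies $R\,a_*\OO_X$ with a single sheaf concentrated in degree $0$ equal to $\OO_A$. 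Thus $a_*\OO_X=\OO_A$ and $R^i a_*\OO_X=0$ for $i>0$; combined with generic finiteness of $a$, the equality $a_*\OO_X=\OO_A$ forces $a$ to have connected fibers and degree one, hence to be birational. The main obstacle I anticipate is the reduction in the previous paragraph—rigorously deducing $\chi(\omega_X)=0$ from non-general-type—since this requires using the birationality/global-generation criteria to rule out $\chi(\omega_X)>0$ with non-big $\omega_X$; the Fourier-Mukai inversion step itself is then essentially a computation verifying that $\CC(\hat 0)$ transforms back to $\OO_A$, which is standard (\cite{mukai}).
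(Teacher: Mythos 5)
The core of your argument coincides with the paper's: once $\chi(\omega_X)=0$ is in hand, the rank formula (\ref{rank}) makes $\widehat{\OO_X}$ a torsion sheaf, Theorem \ref{GV1} together with Hypothesis \ref{hyp} forces $a$ to be surjective, Proposition \ref{trace} gives $\widehat{\OO_X}\cong\CC(\hat 0)$, and Mukai inversion (Theorem \ref{mukai} via Proposition \ref{composition}) returns $Ra_*\OO_X=\OO_A$, whence $\deg a=1$. The paper runs the cycle $(a)\Rightarrow(b)\Rightarrow(c)$ (leaving $(c)\Rightarrow(a)$ implicit), whereas you run $(c)\Rightarrow(b)\Rightarrow(a)\Rightarrow(c)$; your $(b)\Rightarrow(a)$ and $(a)\Rightarrow(c)$ each repeat the full inversion argument, so your cycle is redundant but not wrong. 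Two small slips: in $(c)\Rightarrow(b)$ you treat Hypothesis \ref{hyp} as given, but in statement (c) it is part of what must be verified (easy, since a birational $a$ identifies $\Pico A$ with $\Pico X$ and gives $V^i(\omega_X)=\{\hat 0\}$ for all $i$); and ``torsion-free of positive rank'' should be ``torsion-free of rank $\chi(\omega_X)=0$, hence zero, which is absurd since $\hat 0\in V^0_a(\omega_X)$ by Corollary \ref{inclusions} and Hypothesis \ref{hyp}(b)''.

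The genuine gap is in the step you yourself flag as the main obstacle: deducing $\chi(\omega_X)=0$ from ``$X$ is not of general type''. The paper settles this in one line by quoting Chen--Hacon \cite{chen-hacon}: since $a$ is generically finite (Proposition \ref{finiteness}), $\chi(\omega_X)\ge 1$ already implies that the tricanonical map is birational, so $X$ is of general type; as $\chi(\omega_X)={\rm rk\,}\widehat{\OO_X}\ge 0$, non-general type then forces $\chi(\omega_X)=0$. Your plan to re-derive this from the machinery of \S4 does not work as stated: Theorem \ref{birprov} and Corollary \ref{omega-acgg} are both proved \emph{under the hypothesis that $X$ is of general type}, so citing them to establish general type is circular, and the bare assertion that a continuously globally generated canonical bundle ``separates points generically and is therefore big'' is not an argument (in the surjective case $\dim X=\dim A$ one only has ECGG, and converting continuous generation of $\omega_X\otimes a^*\alpha$ into bigness of $\omega_X$ is essentially the content of the Chen--Hacon theorem). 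Either cite \cite{chen-hacon} as the paper does, or supply that argument in full.
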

\begin{proof} $(a)\Rightarrow (b)$:   The morphism $a$ is generically finite (Prop. \ref{finiteness}). In this case it is well known that $\chi(\omega_X)\ge 1$ implies that $X$ is of general type (by \cite{chen-hacon}, one knows even that the tricanonical map of $X$ is birational). \\
 $(b)\Rightarrow (c)$: since $\chi(\omega_X)=0$ then, by (\ref{rank}), $\widehat{\OO_X}$ is a torsion sheaf. By Theorem \ref{GV1} it follows that $gv(\omega_X)=0$. We already know that this, together with Hypothesis \ref{hyp}, is equivalent to the fact that the morphism $a$ is surjective. Therefore, by Proposition \ref{trace}, $\widehat{\OO_X}=\CC(\hat 0)$. By Proposition \ref{composition}
$$\CC(\hat 0)=\widehat{\OO_X}=\widehat{Ra_*\OO_X}$$
where the hat on the left is the transform $R\Phi_{P_a}$ (from $X$ to $\Pico A$) and the hat on the right
is the transform $R\Phi_{\mathcal P}$ (from $A$ to $\Pico A$). By Mukai's Theorem \ref{mukai}
$$\OO_A=R\Psi_{\mathcal P}(\CC(\hat 0))=R\Psi_{\mathcal P}(R\Phi_{\mathcal P}(Ra_*\OO_X))\buildrel{\ref{mukai}}\over=(-1)_A^*Ra_*\OO_X$$
whence $Ra_*\OO_X=\OO_A$. In particular, $a$ has degree $1$.
\end{proof}

Another useful consequence of the previous analysis is the following. We recall that
in the first case ($gv(\omega_X)\ge 1$) everything is essentially  already known \cite[Prop. 5.5]{pp5}.

\begin{cor}\label{omega-acgg}
 Assume Hypothesis \ref{hyp} and that $X$ is of general type.\\ (a) If $\dim X<\dim A$ then $a_*\omega_X$ is CGG.\\
 (b) If $\dim X=\dim A$, i.e. $a$ is surjective, then $a_*\omega_X$ is ECGG, with $\{\hat 0\}$ as underlying subset of $\supp \tau(R\Delta(a_*\omega_X))$.
\end{cor}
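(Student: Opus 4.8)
The plan is to apply Theorem \ref{regularity} to the sheaf $a_*\omega_X$ on the abelian variety $A$, so that the whole statement reduces to understanding the generic vanishing index of $a_*\omega_X$ and, in the surjective case, the torsion of its transform $\widehat{R\Delta(a_*\omega_X)}$. A preliminary reduction I would record first is that $a$ is generically finite onto its image (Proposition \ref{finiteness}), so $\dim a(X)=d=\dim X$; hence Grauert--Riemenschneider vanishing (a special case of Koll\'ar's theorems used already in Proposition \ref{finiteness}) gives $R^ja_*\omega_X=0$ for all $j>0$, i.e. $Ra_*\omega_X=a_*\omega_X$. By the projection formula this yields $H^i(\omega_X\otimes a^*\alpha)\cong H^i(a_*\omega_X\otimes\alpha)$ for every $\alpha\in\Pico A$, so the cohomological support loci downstairs coincide with those upstairs, $V^i(a_*\omega_X)=V^i_a(\omega_X)$ for all $i$.

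For part (a), where $d<n:=\dim A$, I would simply read off the generic vanishing index of $a_*\omega_X$ as a sheaf on $A$. Hypothesis \ref{hyp}(a) gives $\codim V^i(a_*\omega_X)\ge i+1$ for $0<i<d$; Hypothesis \ref{hyp}(b) gives $V^d(a_*\omega_X)=V^d_a(\omega_X)=\{\hat 0\}$, of codimension $n\ge d+1$; and $V^i(a_*\omega_X)=\emptyset$ for $i>d$ since $a_*\omega_X$ is supported on the $d$-dimensional variety $a(X)$. Therefore $gv(a_*\omega_X)\ge 1$, and Theorem \ref{regularity}(a) immediately gives that $a_*\omega_X$ is $CGG$.

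For part (b), where $a$ is surjective and $d=n$, the same computation now only gives $\codim V^i(a_*\omega_X)\ge i+1$ for $0<i<n$, while $V^n(a_*\omega_X)=\{\hat 0\}$ has codimension exactly $n$; hence $gv(a_*\omega_X)=0$, so $a_*\omega_X$ is a $GV$-sheaf whose transform $\widehat{R\Delta(a_*\omega_X)}$ has torsion (Theorem \ref{GV1}). To apply Theorem \ref{regularity}(b) it remains to identify this torsion, and the key point I would prove is the isomorphism
$$\widehat{R\Delta(a_*\omega_X)}\cong\widehat{\OO_X}.$$
This I would get by combining three ingredients. First, Grothendieck--Verdier duality for $a$ (a proper morphism between smooth varieties of the same dimension $n$, so that $a^!\omega_A\cong\omega_X$ and $\omega_A\cong\OO_A$) reads $Ra_*\omega_X\cong R\Delta(Ra_*\OO_X)$; using $Ra_*\omega_X=a_*\omega_X$ and that $R\Delta$ is an involution on $\mathbf{D}(A)$, this becomes $R\Delta(a_*\omega_X)\cong Ra_*\OO_X$. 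Second, Proposition \ref{composition} gives $R\Phi_{P_a}(\OO_X)\cong R\Phi_{\mathcal{P}}(Ra_*\OO_X)\cong R\Phi_{\mathcal{P}}(R\Delta(a_*\omega_X))$. Third, both $\OO_X$ (via Theorem \ref{GV} and $gv_a(\omega_X)\ge 0$) and $R\Delta(a_*\omega_X)$ (via Theorem \ref{GV} on $A$ and $gv(a_*\omega_X)\ge 0$) have transforms concentrated in degree $n$, so taking $\mathcal{H}^n$ of this one complex identifies $\widehat{\OO_X}$ with $\widehat{R\Delta(a_*\omega_X)}$. Once the isomorphism is in hand, Proposition \ref{trace} gives $\tau(\widehat{R\Delta(a_*\omega_X)})=\tau(\widehat{\OO_X})\cong\CC(\hat 0)$, which is reduced with underlying set $\{\hat 0\}$; Theorem \ref{regularity}(b) then yields that $a_*\omega_X$ is $ECGG$ with $\{\hat 0\}$ as support of its torsion, as claimed.

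The main obstacle is exactly this identification $\widehat{R\Delta(a_*\omega_X)}\cong\widehat{\OO_X}$ in part (b): one must correctly invoke Grothendieck duality for the generically finite (but not finite) morphism $a$, use Grauert--Riemenschneider to collapse $Ra_*\omega_X$ to the honest sheaf $a_*\omega_X$ so that duality produces a sheaf rather than a genuine complex, and then exploit that \emph{both} Fourier--Mukai complexes are concentrated in the single degree $n$ in order to pass from an isomorphism of complexes to the desired isomorphism of cohomology sheaves. The remaining steps --- the support-loci bookkeeping and the appeals to Theorems \ref{GV}, \ref{GV1}, \ref{regularity} and Propositions \ref{composition}, \ref{trace} --- are routine once this is in place.
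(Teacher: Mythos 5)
Your proof is correct and follows essentially the same route as the paper's: reduce to the sheaf $a_*\omega_X$ on $A$ via Grauert--Riemenschneider and the Leray spectral sequence, read off $gv(a_*\omega_X)\ge 1$ in case (a) and apply Theorem \ref{regularity}(a), and in case (b) combine Proposition \ref{trace} with Theorem \ref{regularity}(b). The only difference is that you spell out, via Grothendieck duality for $a$ and Proposition \ref{composition}, the identification $\widehat{R\Delta(a_*\omega_X)}\cong\widehat{\OO_X}$, a step the paper's two-line proof leaves implicit.
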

 \begin{proof} By Grauert-Riemenschneider vanishing (and projection formula), $R^ia_*(\omega_X\otimes a^*\alpha)=0$ for $i>0$. Therefore the Leray spectral sequence degenerates giving
 \begin{equation}\label{astar}
 V^i_a(\omega_X)=V^i(a_*\omega_X).
 \end{equation}
 (a) If $\dim X<\dim A$ then $gv(a_*\omega_X)\ge 1$. Therefore it is $CGG$ by Theorem \ref{regularity}(a). As remarked before, this part of the result is well known \cite[Prop. 5.5]{pp5}.\\
 (b) If $\dim X=\dim A$ then $\tau(\widehat{\OO_X})\cong \CC(\hat0)$ (Proposition \ref{trace}). Hence, by Theorem \ref{regularity}(b), $a_*\omega_X$ is $ECGG$.
 \end{proof}

\subsection{Birationality criterion. }
We introduce a last piece of notation.
\begin{term/not}\label{last} ($a$) Given a line bundle $L$, we denote $\Bs(L)$ its base locus. \\
($b$) We denote $U_0$  the complement in $\Pico A$ of the closed subset $V_a^1(\omega_X)$. Since, by Corollary \ref{inclusions}, \ $V^1_a(\omega_X)\supseteq\cdots\supseteq V^d_a(\omega_X)$,  it follows that, for all $\alpha\in U_0$, $h^0(\omega_X\otimes a^*\alpha)$ takes the minimal value, i.e. $\chi(\omega_X)$.\\
($c$) Given a point $p\in X$, we denote ${\mathcal B}_a(p)$ the subset (closed in $U_0$)
$${\mathcal B}_a(p)=\{\alpha\in U_0\>|\>p \in \Bs( \omega_X\otimes a^*\alpha)\}$$
($d$) We will say that a line bundle ``is birational'' to mean that the associated rational map to projective space is  birational.
\end{term/not}

 The statements we are aiming at are
\begin{thm}\label{birprov} Let $X$ be a variety of general type satisfying Hypothesis \ref{hyp}.
If, for general $p$ in $X$, $\codim_{\Pico A}{\mathcal B}_a(p)\ge 2$ then $\omega_X^2\otimes a^*\alpha$ is birational for all $\alpha\in \Pico A$. In particular, $\omega_X^2$ is birational.
\end{thm}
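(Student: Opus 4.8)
The goal is to show that under Hypothesis 4.8, if $X$ is of general type and $\operatorname{codim}_{\Pico A}\mathcal{B}_a(p)\ge 2$ for general $p$, then $\omega_X^2\otimes a^*\alpha$ is birational for every $\alpha\in\Pico A$. Let me think about the machinery available.

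The key tool is Proposition 4.5 (CGG-GG), which says that if $L$ is CGG at $p$ and $\mathcal{F}$ is ECGG at $p$ (with finite torsion support $S$), then $\mathcal{F}\otimes L\otimes a^*\beta$ is globally generated at $p$ provided $p\notin\bigcup_{\alpha\in S_{+\beta}}\Bs(L\otimes a^*\alpha)$. The natural choice is $\mathcal{F}=a_*\omega_X$ and $L=\omega_X$-type twists — but $\omega_X$ lives on $X$, so I need to be careful about whether to work upstairs on $X$ or downstairs on $A$.

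Let me reconsider. The cleanest route: to prove birationality of $\omega_X^2\otimes a^*\alpha$, I want to separate two general points $p,p'$ and separate tangent directions at a general point. The standard reduction is to show that for general $p$, the sheaf $\omega_X^2\otimes a^*\alpha\otimes \mathcal{I}_p$ (and the analogous ideal-sheaf twist for $p'$) has enough sections. So first I would want a version of the CGG machinery applied to $\mathcal{I}_p\otimes\omega_X$ rather than $\omega_X$ itself — which is exactly why the paragraph preceding the theorem says "by applying the same machinery to the ideal sheaf of a point twisted by the canonical line bundle." The plan is therefore: (i) write $\omega_X^2\otimes a^*\alpha = \omega_X\otimes(\omega_X\otimes a^*\alpha)$ and realize one factor as a CGG line bundle, the other as an ECGG sheaf. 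By Corollary 4.15, $a_*\omega_X$ is CGG when $\dim X<\dim A$ and ECGG (with $S=\{\hat 0\}$) when $a$ is surjective. (ii) For the ideal-sheaf twist, I would establish that $\mathcal{I}_p\otimes a_*\omega_X$ (or its pushforward/transform) remains CGG or ECGG at a general point $q\ne p$ — this is where the codimension hypothesis $\operatorname{codim}\mathcal{B}_a(p)\ge 2$ enters, guaranteeing that the loss of one condition (from the ideal sheaf at $p$) only drops $gv$ by $1$, still leaving $gv\ge 1$ off a codimension-$\ge 2$ locus, or equivalently keeps the relevant transform torsion-free after puncturing.

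The main obstacle, and the step I would spend most effort on, is controlling the base locus term $\bigcup_{\alpha\in S_{+\beta}}\Bs(\omega_X\otimes a^*\alpha)$ appearing in Proposition 4.5(b) in the surjective case. Since $S=\{\hat 0\}$ there, the condition becomes $p\notin\Bs(\omega_X\otimes a^*\beta)$, and I must rule out that $\beta$ (equivalently $\alpha$ after the twist bookkeeping) forces $p$ into the base locus — precisely the role of $\mathcal{B}_a(p)$ having codimension $\ge 2$: for general $p$ the "bad" twists form a small locus, so a general member of the relevant family of $\alpha$'s avoids $\mathcal{B}_a(p)$, and I can still achieve global generation of the ideal-sheaf-twisted sheaf at the second point. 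Threading the ideal sheaf $\mathcal{I}_p$ through the GV/ECGG formalism — checking that $\mathcal{I}_p\otimes a_*\omega_X$ is still a $GV$-sheaf with controlled torsion, so that Theorem 4.6 applies — is the delicate technical heart; the codimension-$2$ hypothesis is exactly what is needed to kill the extra torsion that the point $p$ would otherwise introduce. Once both points and tangent directions are separated for general $p$ (and hence birationality follows from generic global generation of the double-point and ideal twists), the final "in particular, $\omega_X^2$ is birational" is the case $\alpha=\OO_X$.
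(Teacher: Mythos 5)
Your plan follows the paper's proof essentially verbatim: the paper likewise applies the CGG/ECGG machinery to $a_*(\I_p\otimes\omega_X)$, using $\codim_{\Pico A}{\mathcal B}_a(p)\ge 2$ to get $\codim V^1_a(\I_p\otimes\omega_X)\ge 2$ (hence $gv\ge 1$ and CGG when $\dim X<\dim A$, and torsion exactly $\CC(\hat 0)$ via the Proposition \ref{trace} argument when $a$ is surjective), then concludes with Proposition \ref{CGG-GG} paired with the CGG/ECGG property of $\omega_X$ from Corollary \ref{omega-acgg}. The only inessential difference is that the paper does not separate tangent directions: global generation of $\I_p\otimes\omega_X^2\otimes a^*\alpha$ at the points of a fixed dense open set, for $p$ general in that set, already makes the map an isomorphism there.
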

 \begin{cor}\label{birdef} Let $X$ be a a variety of general type satisfying Hypothesis \ref{hyp}, and assume that there exists a $\alpha\in\Pico A$ such that $\omega_X^2\otimes a^*\alpha$ is not birational. Then, for general $\beta\in U_0$, $\Bs(\omega_X\otimes a^*\beta)$ has codimension one. Moreover $X$ is covered by the divisorial components of $\Bs(\omega_X\otimes a^*\beta)$, for $\beta$ varying in $U_0$.
\end{cor}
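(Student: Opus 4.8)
The plan is to deduce Corollary \ref{birdef} from Theorem \ref{birprov} by contraposition, translating a dimension estimate on the loci $\mathcal{B}_a(p)$ into a geometric statement about the base loci $\Bs(\omega_X\otimes a^*\beta)$ via the incidence correspondence between $X$ and $U_0$. Throughout set $n=\dim\Pico A$. First I would observe that, by Theorem \ref{birprov}, the existence of some $\alpha$ with $\omega_X^2\otimes a^*\alpha$ non-birational means that it is \emph{false} that $\codim_{\Pico A}\mathcal{B}_a(p)\ge 2$ for general $p$; equivalently, the locus $\{p\in X : \dim\mathcal{B}_a(p)\ge n-1\}$ is Zariski dense. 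This locus is closed by upper semicontinuity of fibre dimension for the first projection $\pi_1$ of the incidence variety $\Sigma$ introduced in the next step; since $X$ is irreducible it must be all of $X$. Thus $\codim_{\Pico A}\mathcal{B}_a(p)\le 1$, i.e. $\dim\mathcal{B}_a(p)\ge n-1$, for \emph{every} $p\in X$.

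Next I would introduce the incidence correspondence $\Sigma=\{(p,\beta)\in X\times U_0 : p\in\Bs(\omega_X\otimes a^*\beta)\}$. Since $h^0(\omega_X\otimes a^*\beta)$ is constant (equal to $\chi(\omega_X)$) on $U_0$, the relative base locus is closed, so $\Sigma$ is closed, with projections $\pi_1:\Sigma\to X$ and $\pi_2:\Sigma\to U_0$ whose fibres are $\mathcal{B}_a(p)$ and $\Bs(\omega_X\otimes a^*\beta)$ respectively. By the previous step $\pi_1$ is surjective with every fibre of dimension $\ge n-1$, so $\dim\Sigma\ge d+n-1$. As $X$ is of general type, Proposition \ref{lazarsfeld} gives $\chi(\omega_X)\ge 1$, so each $\Bs(\omega_X\otimes a^*\beta)$ is a proper subvariety of $X$, of dimension $\le d-1$. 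If $\pi_2$ failed to dominate $U_0$ its image would have dimension $\le n-1$ and we would find $\dim\Sigma\le (n-1)+(d-1)<d+n-1$, a contradiction; hence $\pi_2$ is dominant, $\dim\Sigma=d+n-1$, and the general fibre of $\pi_2$ has dimension exactly $d-1$. This is the first assertion of the Corollary: $\Bs(\omega_X\otimes a^*\beta)$ has codimension one for general $\beta\in U_0$.

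For the covering statement I would decompose $\Sigma$ into irreducible components and call a component \emph{divisorial} if it dominates $U_0$ with general $\pi_2$-fibre of dimension $d-1$; let $\Sigma^{\mathrm{nd}}$ be the union of the remaining (non-divisorial) components. Each non-divisorial component either fails to dominate $U_0$ or has general $\pi_2$-fibre of dimension $\le d-2$, so in either case its dimension is $\le n+d-2$, whence $\dim\Sigma^{\mathrm{nd}}\le n+d-2$. Consequently $S':=\{p\in X : \dim(\pi_1|_{\Sigma^{\mathrm{nd}}})^{-1}(p)\ge n-1\}$ has $\dim S'\le\dim\Sigma^{\mathrm{nd}}-(n-1)\le d-1$ and is a proper closed subset. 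For $p\notin S'$ the contribution of the non-divisorial components to $\mathcal{B}_a(p)$ has dimension $\le n-2$, so, since $\dim\mathcal{B}_a(p)\ge n-1$, the point $p$ must lie in the image of a divisorial component. Therefore the divisorial components of the base loci $\Bs(\omega_X\otimes a^*\beta)$ sweep out the dense open set $X\setminus S'$.

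The hard part will be the last step: upgrading this generic covering to a covering of all of $X$, and, more delicately, verifying that a point $p$ in the image of a divisorial component $\Sigma_j$ of $\Sigma$ actually lies on a codimension-one component of a genuine base locus $\Bs(\omega_X\otimes a^*\beta)$ — the subtlety being that the $(n-1)$-dimensional family of $\beta$'s through $p$ need not dominate $U_0$, so for those special $\beta$ the base locus could jump and $p$ could fall into a lower-dimensional stratum. To close the covering over the boundary $S'$ I would pass to the closure $\overline{\Sigma}\subseteq X\times\Pico A$ and use that the projection $X\times\Pico A\to X$ is proper, so that the image of the closure of the divisorial locus is closed and, being dense, equals $X$; the matching with honest divisorial components of individual base loci I expect to require a more careful stratification of $\pi_2$ (controlling where its fibre dimension jumps along the family $\pi_1^{-1}(p)$), which is the step I would spend the most effort on.
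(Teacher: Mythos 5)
Your argument is essentially the paper's own proof: the paper likewise forms the incidence variety $\mathcal{B}_a\subseteq X\times U_0$, applies Theorem \ref{birprov} in contrapositive to get $\codim_{\Pico A}\mathcal{B}_a(p)=1$ for general $p$, deduces $\codim_{X\times U_0}\mathcal{B}_a=1$ from the fibre-dimension theorem for the first projection, and concludes that the fibres of the second projection have codimension one in $X$. The paper's version is in fact much terser than yours and does not address the boundary/matching subtleties you flag in your last paragraph (it only ever uses dominance of the divisorial part over $X$ and $U_0$ in the sequel), so the extra care you propose there, while reasonable, is not carried out in the original either.
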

Note that it makes sense to speak of the base locus of $\omega_X\otimes a^*\alpha$ since, by Proposition \ref{lazarsfeld}, the hypotheses of both the Theorem and the Corollary imply that $\chi(\omega_X)>0$, whence
$h^0(\omega_X\otimes a^*\alpha)>0$ for all $\alpha\in \Pico A$.
\begin{proof}[Proof of Corollary \ref{birdef}]
Let ${\mathcal B}_a$ the closed subvariety of $X\times U_0$ defined as
$${\mathcal B}_a=\{(p,\alpha)\in X\times U_0\>|\> \hbox{ $p$ is a base point of $\omega_X\otimes a^*\alpha$ } \}.$$
If, for some $\alpha\in \Pico X$, $\omega_X^2\otimes  a^*\alpha$ is not birational then, by Theorem \ref{birprov}, for general $p\in X$ we have that $\codim{\mathcal B}_a(p)=1$.
Since ${\mathcal B}_a(p)$ is the fiber of the projection $p:{\mathcal B}_a\rightarrow X$
it follows that $\codim_{X\times U_0} {\mathcal B}_a=1$. This implies that the fibers of the other projection $q: {\mathcal B}_a\rightarrow \Pico A$ have codimension 1 in $X$.\end{proof}

\begin{proof}[Proof of Theorem \ref{birprov}] We first recall some basic facts about the Fourier-Mukai transform of the sheaves $\I_p\otimes\omega_X$.
In the first place, if $p$ does not belong to $exc(a)$ then
\begin{equation}\label{GR}
R^ia_*(\I_p\otimes\omega_X\otimes a^*\alpha)=0\qquad\hbox{for $ i>0$}
\end{equation}
This follows immediately from the exact sequence
 \begin{equation}\label{standard}
0\rightarrow \I_p\otimes\omega_X\rightarrow \omega_X\rightarrow \OO_p \otimes \omega_X\rightarrow 0
\end{equation}
and the Grauert-Riemenschneider vanishing theorem. Hence, as for the canonical bundle (see the proof of Prop. \ref{trace}), the Leray spectral sequence yields that
\begin{equation}\label{GRp}V^i_a(\I_p\otimes \omega_X)=V^i(a_*(\I_p\otimes\omega_X))
\end{equation}
By sequence (\ref{standard}), tensored by $a^*\alpha$, it follows that
\begin{equation}\label{equality}V^i_a(\I_p\otimes \omega_X)=V^i_a(\omega_X)\qquad\hbox{for all }  i\ge 2
\end{equation}
Concerning the case $i=1$ we have  the surjection
$$H^1(\I_p\otimes \omega_X\otimes a^*\alpha)\twoheadrightarrow H^1(\omega_X\otimes a^*\alpha),$$
 which is an isomorphism if and only if
 $p$ is not a base point of $\omega_X\otimes \alpha$. In other words
 $$V^1_a(\I_p\otimes\omega_X)={\mathcal B}_a(p)\cup V^1_a(\omega_X)$$
 Therefore the hypothesis about ${\mathcal B}_a(p)$ ensures that
 \begin{equation}\label{codim} \codim V^1_a(\I_p\otimes\omega_X)\ge 2
 \end{equation}
  Now we distinguish two cases:

 \noindent ($a$) $\dim X<\dim A$. In this case, by Hypothesis \ref{hyp}, together with (\ref{GRp}),   (\ref{equality}) and (\ref{codim}),  $gv(a_*(\I_p\otimes\omega_X))\ge 1$. Hence, by Theorem \ref{regularity}(a), $a_*(\I_p\otimes\omega_X)$ is CGG. Therefore $\I_p\otimes \omega_X$ itself is $CGG$ outside $exc(a)$ (with respect to $a$). Since the same is true for $\omega_X$ (Corollary \ref{omega-acgg}(a)), it follows from Proposition \ref{CGG-GG}(a) that, for $p$ outside $exc (a)$ and for all $\alpha\in\Pico A$, $\I_p\otimes \omega_X^2\otimes a^*\alpha$ is globally generated outside $exc(a)$. This means that the projective map associated to $\omega_X^2\otimes a^*\alpha$ is birational (in fact an isomorphism on the complement of $exc(a)$).

 \noindent  ($b$) $\dim X=\dim A$, i.e. the map $a$ is surjective.
 Again by Hypotheses \ref{hyp}, (\ref{equality}) and (\ref{codim}), the sheaf $\I_p\otimes\omega_X$ satisfies,
 $$\codim V^i(\I_p\otimes\omega_X)\ge i+1\quad\hbox{for all $i$ such that $0<i<d$}$$
 while $$V^d(\I_p\otimes \omega_X)=\{\hat 0\} \quad\hbox{and }\quad R^d\Phi_{P_a}(\I_p\otimes\omega_X)=R^d\Phi_{P_a}(\omega_X)=\CC(\hat 0).$$
 Therefore the exact same arguments of Proposition \ref{trace} apply, proving that the torsion of $\widehat{R\,{\mathcal Hom}(a_*(\I_p),\OO_A)}$ is $\CC(\hat 0)$. Hence, by Theorem \ref{regularity}(b), $a_*(\I_p\otimes\omega_X)$ is ECGG, with $\{\hat 0\}$ as underlying subset of $\supp \tau(\widehat{R\Delta(\I_p\otimes\omega_X)})$. It follows that, for $p$ not belonging to $exc(a)$, the sheaf $\I_p\otimes\omega_X$ is $ECGG$ away of $exc(a)$. Let $W$ be the non-empty open set  of points $p\in X$ such that $\omega_X$ is $CGG$ at $p$. In view of Remark \ref{useful}, $W$ is the complement of the intersection of all base loci $\Bs(\omega_X\otimes a^*\alpha^{-1})$, for $\alpha\in \Pico A$ such that $h^0(\omega_X\otimes a^*\alpha^{-1})$ is minimal, i.e. equal to $\chi(\omega_X)$. Now let $\alpha \in \Pico A$. It follows from Proposition \ref{CGG-GG}(b) that, if $q$ is not a base point of $\omega_X\otimes a^*\alpha$ (and does not lie in $exc(a)$), then $\I_p\otimes\omega_X^2\otimes a^*\alpha$ is globally generated at $q$.
  Denoting $U_\alpha$ the complement of $exc(a)\cup {\rm Bs}(\omega_X\otimes a^*\alpha)$, we conclude that  for all $p\in U_\alpha\cap W$ the sheaf $\I_p\otimes \omega_X^2\otimes a^*\alpha$ is globally generated at all points of $U_\alpha\cap W$. As above, this means that the projective map associated to $\omega_X^2\otimes a^*\alpha$ is an isomorphism on $U_\alpha\cap W$.
\end{proof}

\section{Proof of the Theorem}
Here we prove Theorem \ref{A}  of the Introduction.

\noindent  $(b)\Rightarrow (a)$ Let $(A,\Theta)$ be an indecomposable p.p.a.v., and let $X\rightarrow \Theta$ be a desingularization. Without loss of generality, we can assume that $\Theta$ is \emph{symmetric}, i.e. $\Theta=(-1)^*\Theta$.  The restriction map $H^0(A,\OO_A(2\Theta))\rightarrow H^0(\Theta, \OO_\Theta(2\Theta))=H^0(\Theta,\omega_X^2)$ is surjective. Hence the projective map associated to $\omega_X^2$ has degree two, since $x$ and $-x$ have the same image. By a result of Ein-Lazarsfeld, $\Theta$ is normal and has rational singularities \cite[Thm. 1]{el}. Hence $\Theta$ has canonical singularities by a result of Koll\'ar \cite[Thm. 11.1(1)]{singpairs}, and therefore $H^0(X,\omega_X^2)\cong H^0(\Theta,\omega_\Theta^2)$. It follows that the bicanonical map of $X$ has degree $2$.

 The rest of the section is devoted to the proof of the  implication $(a)\Rightarrow (b)$. The argument is composed of various steps.

\subsection{Decomposition}
The hypotheses of Theorem \ref{A} are more than enough to imply that
 $alb: X\rightarrow \Alb X$, the Albanese map of $X$, satisfy  Hypothesis \ref{hyp}. Keeping the Terminology/Notation \ref{last}, it follows that, by Corollary \ref{birdef},  $|\omega_X\otimes\alpha|$ has a base divisor for all $\alpha\in U_0$. More precisely, as in the Proof of Corollary \ref{birdef}, let us consider the relative base locus
 $${\mathcal B}=\{(p,\alpha)\in X\otimes U_0\>|\> \hbox{ $p$ is a base point of $\omega_X\otimes \alpha$ } \}.$$
 equipped with the projections on the two factors, $p$ and $q$. ${\mathcal B}$ has a natural subscheme structure given by the image of the relative evaluation map $p^*(p_*{\mathcal L})\otimes{\mathcal L}^{-1}\rightarrow \OO_{X\times U_0}$, where ${\mathcal L}=p^*\omega_X\otimes P_{|X\times U_0}$. We know from Theorem \ref{birprov} that ${\mathcal B}$ has codimension $1$ and that its divisorial part is dominant on $X$ and surjects on $U_0$ via $p$ and $q$.   Let
 $${\mathcal Y}\subseteq {\mathcal B}$$
 be the union of the divisorial components of ${\mathcal B}$. Finally, for $\alpha\in U_0$, let $F_\alpha$ be the (scheme-theoretic) fiber of $q:{\mathcal Y}\rightarrow U_0$. Therefore, at a \emph{general} point  $\alpha\in U_0$, the are no other fixed divisors, so that $F_\alpha$ is the fixed divisor of $\omega_X\otimes \alpha$:  $$|\omega_X\otimes \alpha| =
 |M_\alpha|+F_\alpha$$ where $|M_\alpha|$ is the (possibly empty) mobile part. We consider the Abel-Jacobi map
 $$f_{\alpha_0}: \Pico A\rightarrow \Pico X\qquad \alpha\mapsto \OO_X(F_\alpha-F_{\alpha_0})$$
 where $\alpha_0$ is fixed in $U_0$. As it is a map between abelian varieties, it is  a morphism. By rigidity
 $$f:= f_{\alpha_0}-f_{\alpha_0}(0):\Pico A\rightarrow \Pico X$$ is a homomorphism. Note that $f$ does not depend on $\alpha_0$ since, given another suitably general $\alpha_1\in \Pico A$, $f_{\alpha_1}-f_{\alpha_0}$ is a translation.
 We have the following strong constraint on the Albanese and Picard variety of $X$:
 \begin{lem}\label{decomp} $f^2=f$ and $\Pico X$ decomposes as \ \
 $\Pico X\cong \ker f\times {\rm ker}({\rm id}- f).$
\end{lem}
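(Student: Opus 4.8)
The plan is to reduce the whole statement to the single idempotency relation $f^2=f$, after which the product decomposition is purely formal. Indeed, using that $alb^*$ identifies $\Pico A$ with $\Pico X$ so that $f$ is genuinely an endomorphism of $\Pico X$, if $f^2=f$ then $g:={\rm id}-f$ satisfies $g^2=g$, $fg=gf=0$, $\ker f={\rm im}\,g$ and $\ker({\rm id}-f)={\rm im}\,f$, and the two mutually inverse homomorphisms
\[
\Pico X\longrightarrow \ker({\rm id}-f)\times\ker f,\quad \alpha\mapsto\bigl(f(\alpha),\,({\rm id}-f)(\alpha)\bigr),\qquad (\beta,\gamma)\mapsto\beta+\gamma
\]
exhibit the asserted splitting. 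So the entire content of the Lemma is the identity $f^2=f$.

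First I would record the elementary class identities coming from the splitting $|\omega_X\otimes a^*\alpha|=|M_\alpha|+F_\alpha$ for general $\alpha\in U_0$. In ${\rm Pic}\,X$ one has $\OO_X(F_\alpha)=\OO_X(F_{\alpha_0})\otimes f_{\alpha_0}(\alpha)$ and hence $\OO_X(M_\alpha)=\omega_X\otimes a^*\alpha\otimes\OO_X(-F_{\alpha_0})\otimes f_{\alpha_0}(\alpha)^{-1}$. Consequently the class of the fixed part $F_\alpha$ depends on $\alpha$ only through $f(\alpha)$, while the class of the movable part $M_\alpha$ depends on $\alpha$ only through $({\rm id}-f)(\alpha)$; in particular, for $\delta\in\ker f$ one gets $\OO_X(F_{\alpha+\delta})\cong\OO_X(F_\alpha)$ and $\OO_X(M_{\alpha+\delta})\cong\OO_X(M_\alpha)\otimes a^*\delta$. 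At this stage these are mere bookkeeping consequences of $f+({\rm id}-f)={\rm id}$ and do not yet yield idempotency.

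The geometric heart of the argument, and the step I expect to be the main obstacle, is to upgrade these class statements to statements about the divisors themselves, and thereby pin down $f$ as a \emph{projection} rather than an arbitrary homomorphism. Concretely, I would show that along the connected component $K=(\ker f)^0$ the fixed divisor is literally unchanged, $F_{\alpha+\delta}=F_\alpha$ for $\delta\in K$: since the line bundle $\OO_X(F_\alpha)$ is constant on the coset $\alpha+K$, the algebraic family $\{F_{\alpha+\delta}\}$ of base divisors moves inside a single complete linear system, and the rigidity of a fixed (base) component should force it to be constant; equivalently, the relative base locus ${\mathcal Y}\to U_0$ of Corollary \ref{birdef} is pulled back from the quotient $\Pico A/K$. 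Making this rigidity precise — controlling the actual divisor and not only its class, and handling the scheme structure of ${\mathcal Y}$ arising from the relative evaluation map — is exactly where the hypothesis of non-birationality of the bicanonical map is used, through Theorem \ref{birprov} and Corollary \ref{birdef}, which guarantee that $F_\alpha$ is a genuine, $X$-covering divisor rather than empty.

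Finally I would deduce $f^2=f$ from this invariance together with the complementary behaviour of the movable part. The invariance of $F_\alpha$ under $K$ makes $f$ factor through the projection $\Pico A\to \Pico A/K$, forcing $\ker f=K$ to be connected and identifying the factorization with an honest direct-factor projection onto ${\rm im}\,f$; tracing the definition of $f$ through this projection shows that $f$ restricts to the identity on ${\rm im}\,f$, i.e. $f(\gamma)=\gamma$ for every $\gamma\in{\rm im}\,f$, which is precisely $f^2=f$. The only delicate point here is that a point of ${\rm im}\,f$ need not be general in $\Pico A$, so that the fixed-divisor description of $f$ may not apply to it directly; this I would circumvent by a continuity/monodromy argument along the family ${\mathcal Y}$, or by invoking the already-established factorization on the dense open set where $f$ is computed by fixed divisors. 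Feeding $f^2=f$ into the formal decomposition of the first paragraph then completes the proof.
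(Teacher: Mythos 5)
Your reduction to the single identity $f^2=f$ and the formal splitting in your first paragraph are fine and agree with the paper. The problem is that the two steps you propose in place of a proof of $f^2=f$ both have genuine gaps. In your third paragraph, the assertion $F_{\alpha+\delta}=F_\alpha$ for $\delta\in(\ker f)^0$ does not follow from ``rigidity'': the divisors $F_{\alpha+\delta}$ all have the same class, so they vary inside the single linear system $|\OO_X(F_\alpha)|$, but each is the fixed divisor of a \emph{different} system $|\omega_X\otimes\alpha\otimes\delta|$, and nothing a priori prevents them from moving nontrivially in $|\OO_X(F_\alpha)|$ as $\delta$ varies. In your fourth paragraph the logic fails outright: a surjection $f\colon\Pico X\to{\rm Im}\,f$ with connected kernel $K$ always factors through $\Pico X/K$ by an isomorphism onto ${\rm Im}\,f$, but this never forces $f$ to restrict to the identity on ${\rm Im}\,f$. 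For instance $f(x,y)=(0,x)$ on $E\times E$ has connected kernel $0\times E$ and factors through the quotient, yet $f^2=0\neq f$. So ``tracing the definition through the projection'' cannot yield idempotency; a geometric input beyond the factorization is indispensable, and your proposal does not supply it.

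The missing input is the identity $\OO_X(M_\alpha)\otimes\OO_X(F_\beta)\cong\omega_X\otimes\alpha\otimes f(\beta\otimes\alpha^{-1})$, which follows from the definition of $f$ by rewriting the left-hand side as $\OO_X(M_\alpha+F_\alpha)\otimes\OO_X(F_\beta-F_\alpha)$. Set $\gamma=\alpha\otimes f(\beta\otimes\alpha^{-1})$. The subsystem $|M_\alpha|+F_\beta$ of $|\omega_X\otimes\gamma|$ is the \emph{whole} system, because $h^0(M_\alpha)=h^0(\omega_X\otimes\alpha)=\chi(\omega_X)=h^0(\omega_X\otimes\gamma)$ for $\alpha,\gamma\in U_0$. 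Hence the fixed divisor of $|\omega_X\otimes\gamma|$ is $F_\beta$ plus the fixed part of $|M_\alpha|$, which is empty for general $\alpha$; since $\gamma$ is general when $\alpha$ and $\beta$ are, that fixed divisor is also $F_\gamma$, so $F_\gamma=F_\beta$ and therefore $f(\alpha\otimes f(\beta\otimes\alpha^{-1}))=f(\beta)$. Expanding gives $f(f(\beta\otimes\alpha^{-1}))=f(\beta\otimes\alpha^{-1})$ on a dense subset, whence $f^2=f$ everywhere. This computation, not a factorization or rigidity argument, is the content of the Lemma.
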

 \begin{proof} Let $\alpha,\beta\in \Pico A$. We have that
 \begin{equation}\label{F}
 \OO_X(M_\alpha)\otimes\OO_X(F_\beta)=\omega_X\otimes\alpha\otimes f(\beta\otimes \alpha^{-1})
 \end{equation}
 This follows by definition of $f$ since the left hand side is isomorphic to
 $$\OO_X(M_\alpha+F_\alpha)\otimes\OO_X(F_\beta-F_{\alpha})\cong \omega_X\otimes\alpha\otimes f(\beta\otimes \alpha^{-1}).$$ Assuming that $\alpha$ and $\beta$ are \emph{general}, so that the fixed divisor of $\omega_X\otimes\alpha\otimes f(\beta\otimes \alpha^{-1})$ is $F_{\alpha\otimes f(\beta\otimes \alpha^{-1})}$ (i.e. $|M_{\alpha\otimes f(\beta\otimes \alpha^{-1})}|$ has no divisors), it follows from (\ref{F}) that
 $$F_{\beta}=F_{\alpha\otimes f(\beta\otimes \alpha^{-1})},$$
  i.e. $f({\beta})=f({\alpha\otimes f(\beta\otimes \alpha^{-1}})).$ This means that
  $ f( f(\beta\otimes \alpha^{-1}))=f(\beta\otimes \alpha^{-1})$ for general $\alpha$ and $\beta$, hence $f^2=f$. This gives the splitting of the exact sequence $\hat 0\rightarrow \ker f\rightarrow \Pico A\rightarrow {\rm Im} f\rightarrow \hat 0$  and the identification ${\rm Im} f\cong \ker({\rm id}-f)$.
 \end{proof}

 \subsection[Triviality of $\ker f$.]{Triviality of ${\bf ker \, \emph{f}}$.}
 \begin{lem}\label{zero} The morphism $f$ is injective. i.e. $\Pico X=\ker({\rm id}-f)$.
 \end{lem}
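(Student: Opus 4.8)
The plan is to argue by contradiction: assume $\ker f\neq 0$ and show that the resulting decomposition of $\Pico X$ is incompatible with the primitivity hypothesis. Set $K=\ker f$, a nontrivial abelian subvariety of $\Pico A\cong\Pico X$, and let $k=\dim K\ge 1$. Dualizing the inclusion $K\hookrightarrow\Pico A$ produces a quotient $A=\widehat{\Pico A}\twoheadrightarrow \widehat{K}=:B$, and I compose with the Albanese map to obtain a morphism $g:X\to B$ with $\dim B=k$ and $g^*\Pico B=K\subseteq\Pico X$. The starting point is to read off from Lemma \ref{decomp} and equation (\ref{F}) the precise decomposition of the paracanonical system: writing $c=\OO_X(F_{\alpha_0})\otimes f(\alpha_0)^{-1}$, one has $\OO_X(F_\alpha)=f(\alpha)\otimes c$ and hence $\OO_X(M_\alpha)=N\otimes(\mathrm{id}-f)(\alpha)$ with $N=\omega_X\otimes c^{-1}$. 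Since $f$ is idempotent, $\mathrm{Im}(\mathrm{id}-f)=\ker f=K$, so the \emph{fixed} classes $\OO_X(F_\alpha)$ vary only in $\mathrm{Im}\,f$ while the \emph{mobile} classes $\OO_X(M_\alpha)=N\otimes g^*\lambda$ vary only in the base directions $K=g^*\Pico B$.

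The core geometric step is to upgrade this splitting of divisor \emph{classes} to an actual compatibility of the base divisors with $g$. I would analyze the relative divisorial base locus $\mathcal{Y}\subseteq X\times U_0$ introduced before Lemma \ref{decomp}: its associated line bundle restricts to $f(\alpha)\otimes c$ on each slice $X\times\{\alpha\}$, so its class is pulled back from $X\times\mathrm{Im}\,f$ under $\mathrm{id}_X\times\pi$, where $\pi:\Pico A\to\mathrm{Im}\,f$ is the projection along $K$. Using that, for general $\alpha$, $F_\alpha$ is the fixed divisor of $|\omega_X\otimes\alpha|$ and that by Corollary \ref{birdef} these divisors cover $X$ and have codimension one, I would deduce that $\mathcal{Y}$ is invariant in the $K$-direction, i.e. $F_{\alpha\eta}=F_\alpha$ for $\eta\in K$ and general $\alpha$. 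Consequently, for a fixed $K$-coset of parameters the fixed divisor $F:=F_\alpha$ is constant and $|N\otimes g^*\lambda|$ is base-divisor-free with $h^0=\chi(\omega_X)$ for \emph{all} $\lambda\in\Pico B$; thus the mobile part of the paracanonical system moves only in directions pulled back from $B$.

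The contradiction is then extracted by restricting to a general fiber $X_b=g^{-1}(b)$. Because $g^*\lambda|_{X_b}$ is trivial, the line bundle $\omega_X\otimes\alpha\eta|_{X_b}=N|_{X_b}\otimes\OO_{X_b}(F|_{X_b})$ is independent of $\eta\in K$, so the cohomology of $\omega_X\otimes\alpha\eta$ contains, via the Kollár/Leray splitting (cf. (\ref{kollar1})), a contribution that is constant as $\eta$ ranges over the $k$-dimensional $K$. This forces a positive-dimensional translate of $K$ inside some $V^i(\omega_X)$ with $i>0$ (equivalently, the nontrivial fibration $g:X\to B$ onto the positive-dimensional variety $B$ produces such a component by the main theorem of \cite{gl2}), contradicting $\dim V^i(\omega_X)=0$ for $i>0$. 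Hence $\ker f=0$ and $\Pico X=\ker(\mathrm{id}-f)$. I expect the main obstacle to be precisely the middle step: rigorously upgrading the equality of divisor \emph{classes} $\OO_X(F_\alpha)=f(\alpha)\otimes c$ to genuine $K$-invariance of the base divisors $F_\alpha$ (and hence to a fibration-compatible splitting of $\omega_X\otimes\alpha$), after which the cohomological jump contradicting primitivity is comparatively routine.
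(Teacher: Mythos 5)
Your setup and the first half of your plan coincide with the paper's: the idempotent $f$ gives $\OO_X(F_\alpha)=f(\alpha)\otimes c$ and $\OO_X(M_\alpha)=N\otimes({\rm id}-f)(\alpha)$, and the crucial intermediate statement is indeed that the actual divisors $F_\alpha$, not just their classes, depend only on the ${\rm Im}\,f$-component of $\alpha$. But you leave exactly this step unproved, and it is the heart of the matter, not a routine upgrade. The paper settles it in a few lines (its ``Claim''): for $\beta\in\ker f\cap U_0$ and $\gamma\in\ker({\rm id}-f)\cap U_0$ one has $\OO_X(M_\beta)\otimes\OO_X(F_\gamma)\cong\omega_X\otimes\beta\otimes\gamma$, and since $h^0(M_\beta)=h^0(\omega_X\otimes\beta\otimes\gamma)=\chi(\omega_X)$, the inclusion $|M_\beta|+F_\gamma\subseteq|\omega_X\otimes\beta\otimes\gamma|$ is an equality of linear systems; comparing fixed divisors for general $\beta\otimes\gamma$ gives $F_{\beta\otimes\gamma}=F_\gamma$ and that $|M_\beta|$ has no fixed divisor. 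Some argument of this kind must be supplied.

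The more serious defect is your final step, which genuinely fails. You propose to contradict primitivity by restricting to a general fiber of $g:X\to B=\widehat{K}$ and invoking Koll\'ar/Leray, asserting that the nontrivial map onto the positive-dimensional $B$ produces a positive-dimensional component of some $V^i(\omega_X)$ by the main theorem of \cite{gl2}. First, $g$ need not have positive-dimensional fibers: in the extreme case $f=0$ (all $F_\alpha$ linearly equivalent) one has $K=\Pico X$ and $g$ is the Albanese map itself, which is generically finite, so your fiberwise argument says nothing --- yet this is precisely a case the lemma must exclude. Second, even when $\dim B<\dim X$, the existence of such a morphism does not force $\dim V^i(\omega_X)>0$ for some $i>0$: the failure of generic vanishing for $g$ can be concentrated at $i=d$, where $V^d_g(\omega_X)=\{\hat 0\}$ is finite but of codimension $\dim B<d$; the theorem of \cite{gl2} passes from positive-dimensional components to fibrations, not conversely, and fibering over (a quotient of) an abelian variety is perfectly compatible with primitivity. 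The paper's contradiction is of a different nature and uses the bicanonical hypothesis essentially: $b:X\to B$ satisfies Hypothesis \ref{hyp}, $\omega_X^2$ is not birational, so by the last part of Corollary \ref{birdef} $X$ must be covered by the divisorial components of $\Bs(\omega_X\otimes b^*\beta)$ as $\beta$ varies in $\Pico B$ --- but by the Claim these all equal the single divisor $F$, which cannot cover $X$. Without some such use of the non-birationality assumption your argument cannot close.
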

 \begin{proof} Assume that $\dim\ker f>0$. Since the complement of $U_0$ is $V^1(\omega_X)$, which is a finite set, $U_0\cap \ker f$ is a non-empty open subset of $\ker f$. By construction, for all $\beta\in U_0\cap \ker f$ we have the decomposition
 \begin{equation}\label{kerf}
 \omega_X\otimes\beta=\OO_X(M_\beta+F)
 \end{equation}
 where $F:=F_\beta$ is constant for $\beta$ varying in $U_0\cap\ker f$.  We make the following\\ Claim: \emph{for \emph{general} $\beta\in U_0\cap ker f$, $F$ is the fixed divisor of $\omega_X\otimes \beta$ \emph(in other words: $\OO_X(M_\beta)$ has no fixed divisors)}.\\ Admitting the Claim for the moment, we conclude the proof.   By
  Lemma \ref{decomp}, \ $\Alb X\cong B\times C$, where $B$ and $C$ are the dual abelian varieties of $\ker f$ and $\ker({\rm id}-f)$. Since $B\ne 0$, composing the Albanese map of $X$ with the projection on $B$ we get a morphism $b:X\rightarrow B$ satisfying      Hypothesis \ref{hyp}. Indeed $V^i_b(\omega_X)=V^i(\omega_X)\cap \ker f$ and therefore the hypothesis of Theorem \ref{A} yields  that $\dim V^i_b(\omega_X)=0$ for all $i>0$. Moreover, $\Pico B=\ker f$ is embedded in $\Pico X$. By Proposition \ref{finiteness}, $b$ is generically finite. By the Claim, the divisorial part of the base locus of $\omega_X\otimes b^*\beta$ is constant on a non-empty Zariski open set of $\Pico B$. But, since $\omega_X^2$ is not birational, this is in contrast with Corollary \ref{birdef} (last part of the statement) applied to $b$. Hence $\ker f$ has to be trivial. \smallskip\\
  To prove the Claim, we notice that, for $\gamma\in \ker({\rm id}-f)\cap U_0$, we have the decomposition $\omega_X\otimes\gamma=\OO_X(M+F_{\gamma})$, where $F_{\gamma}$ is  fixed and $\OO_X(M):=\OO_X(M_\gamma)$ is constant on $U_0\cap \ker({\rm id}-f)$. From this, (\ref{kerf}) and (\ref{F}) it follows that $|\omega_X\otimes\beta\otimes \gamma|=|M_\beta|+F_\gamma$, with $\beta\in\ker f\cap U_0$ and $\gamma\in\ker ({\rm id}-f)\cap U_0$. In other words $M_{\beta\otimes\gamma}=M_\beta$ and $F_{\beta\otimes\gamma}=F_\gamma$. For $\beta$ and $\gamma$ general, $\beta\otimes \gamma$ is general in $\Pico A$. The Claim follows since  we know that $|M_{\beta\otimes\gamma}|$ has no  fixed divisors.
  \end{proof}
  \subsection[The Poincar\'e line bundle on $X\times \Pico X$. ]{The Poincar\'e line bundle on ${\bf \emph{X}\times Pic^0\, \emph{X}}$. }
   It follows from Lemma \ref{zero} that, for all $\alpha\in U_0$,
  \begin{equation}\label{omega}
  |\omega_X\otimes\alpha|=|M|+F_\alpha,
  \end{equation}
   where $F_\alpha$ (the fiber of the projection ${\mathcal Y}\rightarrow \Pico X$) is the fixed divisor, hence $\chi(\omega_X)=h^0(M)$.

  Let  $\overline{{\mathcal Y}}$  be the closure of ${\mathcal Y}$ in $X\times \Pico X$. Moreover, for $p\in X$ let ${\mathcal D}_p$ be the fiber of the projection $\overline{\mathcal Y}\rightarrow X$.  For general $p\in X$, we have that
  ${\mathcal D}_p$ is the closure of the union of the divisorial components of the locus of $\alpha\in U_0$ such that $p\in \Bs(\omega\otimes\alpha)$. Since the Albanese map is defined up to a translation in $\Alb X$, we can assume that there is one such point, say $\bar p$, such that $alb(\bar p)=0$ in $\Alb X$. Finally, we recall from \S1 that $P$ denotes the Poincar\'e line bundle on $X\times\Pico X$.
  \begin{lem}\label{poincare'} \ \ \ \ \ \ \ \ $P\cong \OO_{X\times\Pico X}(\overline{\mathcal Y})\otimes p^*(\omega_X^{-1}\otimes M)\otimes q^*\OO_{\Pico X}(-{\mathcal D}_{\bar p})$
  \end{lem}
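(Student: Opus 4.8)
The plan is to identify the line bundle $P$ on $X \times \Pico X$ by exhibiting a natural section of $\OO_{X \times \Pico X}(\overline{\mathcal Y}) \otimes p^*(\omega_X^{-1} \otimes M) \otimes q^*\OO_{\Pico X}(-{\mathcal D}_{\bar p})$ and checking that this line bundle satisfies the two defining properties of the Poincar\'e bundle, namely that its restriction to each fiber $X \times \{\alpha\}$ is algebraically equivalent to the trivial bundle (in fact lies in the correct $\Pico$-class $\alpha$), and that its restriction to the fiber $\{\bar p\} \times \Pico X$ is trivial. Since $P$ is characterized up to isomorphism by these normalizations, establishing both would finish the proof.

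\textbf{First} I would compute the restriction to a general vertical fiber $X \times \{\alpha\}$, for $\alpha \in U_0$. By construction the fiber of $q: \overline{\mathcal Y} \rightarrow \Pico X$ over $\alpha$ is the fixed divisor $F_\alpha$, so $\OO_{X \times \Pico X}(\overline{\mathcal Y})|_{X \times \{\alpha\}} = \OO_X(F_\alpha)$. Combining this with the relation (\ref{omega}), namely $\omega_X \otimes \alpha = \OO_X(M) \otimes \OO_X(F_\alpha)$, and the fact that $p^*(\omega_X^{-1} \otimes M)$ restricts to $\omega_X^{-1} \otimes \OO_X(M)$ while $q^*\OO_{\Pico X}(-{\mathcal D}_{\bar p})$ restricts trivially, the right-hand side restricts to
$$\OO_X(F_\alpha) \otimes \omega_X^{-1} \otimes \OO_X(M) \cong (\omega_X \otimes \alpha) \otimes \omega_X^{-1} \cong \alpha,$$
which is exactly $P|_{X \times \{\alpha\}}$. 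This matches on the dense open set $U_0$, hence (both sides being line bundles on the smooth variety $X \times \Pico X$ agreeing on fibers over a dense subset, and the difference being a line bundle trivial on a dense family of fibers) the two bundles differ by the pullback $q^*N$ of a line bundle $N$ on $\Pico X$.

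\textbf{The harder step} will be pinning down this pullback correction $q^*N$, and this is precisely the role played by the normalization along $\{\bar p\} \times \Pico X$ together with the term $q^*\OO_{\Pico X}(-{\mathcal D}_{\bar p})$. I would restrict both sides to $\{\bar p\} \times \Pico X$. On the one hand $P|_{\{\bar p\} \times \Pico X}$ is trivial because we arranged $alb(\bar p) = 0$. On the other hand, $\OO_{X \times \Pico X}(\overline{\mathcal Y})|_{\{\bar p\} \times \Pico X} = \OO_{\Pico X}({\mathcal D}_{\bar p})$ by definition of ${\mathcal D}_{\bar p}$ as the fiber of $\overline{\mathcal Y} \rightarrow X$ over $\bar p$, while $p^*(\omega_X^{-1} \otimes M)$ restricts trivially (it is constant along the $\Pico X$ direction) and $q^*\OO_{\Pico X}(-{\mathcal D}_{\bar p})$ restricts to $\OO_{\Pico X}(-{\mathcal D}_{\bar p})$. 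Thus the right-hand side restricts to $\OO_{\Pico X}({\mathcal D}_{\bar p}) \otimes \OO_{\Pico X}(-{\mathcal D}_{\bar p}) \cong \OO_{\Pico X}$, matching $P$. Therefore the correcting bundle $N$ is itself trivial, and the two line bundles coincide.

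The main obstacle I anticipate is \emph{scheme-theoretic care} in handling $\overline{\mathcal Y}$: one must be sure that the subscheme structure on $\overline{\mathcal Y}$ (coming from the relative evaluation map, as recalled just before Lemma \ref{decomp}) makes $F_\alpha$ genuinely the divisor class appearing in (\ref{omega}) for general $\alpha$, and that ${\mathcal D}_{\bar p}$ is a genuine divisor on $\Pico X$ rather than something of smaller dimension. This requires knowing that $\bar p$ is general enough that $\overline{\mathcal Y} \rightarrow X$ is flat near $\bar p$ with divisorial fiber — which is exactly guaranteed by the surjectivity and codimension-one statements carried over from Theorem \ref{birprov} and Corollary \ref{birdef}. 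Once these transversality/flatness points are secured, the identification is forced fiberwise in both directions and the seesaw-type argument closes.
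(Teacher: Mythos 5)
Your proposal is correct and follows essentially the same route as the paper: the paper's proof also computes the restriction of $\OO_{X\times\Pico X}(\overline{\mathcal Y})\otimes p^*(\omega_X^{-1}\otimes M)\otimes q^*\OO_{\Pico X}(-{\mathcal D}_{\bar p})$ to $X\times\{\alpha\}$ for $\alpha\in U_0$ (getting $\alpha$ via (\ref{omega})) and to $\{\bar p\}\times\Pico X$ (getting the trivial bundle), and then concludes by the see-saw principle. Your extra remarks on the scheme structure of $\overline{\mathcal Y}$ and the divisoriality of ${\mathcal D}_{\bar p}$ are sensible precautions but do not change the argument.
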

  \begin{proof} By the definition of $\overline{\mathcal Y}$ and (\ref{omega}) we have that
  $$ \OO_{X\times\Pico X}(\overline{\mathcal Y})\otimes p^*(\omega_X^{-1}\otimes M)\otimes q^*\OO_{\Pico X}(-{\mathcal D}_{\bar p}),$$ - restricted to $X\times\{\alpha\}$ is isomorphic to $\OO_X(F_\alpha)\otimes \OO_X(F_\alpha)^{-1}\otimes\alpha=\alpha=P_{|X\times \{\alpha\}}$, for all $\alpha\in U_0$;\\- restricted to $\{\bar p\}\times\Pico X$ is isomorphic to
  $\OO_X({\mathcal D}_p)\otimes\OO_X(-{\mathcal D}_p)$, i.e. trivial. \\
  The Lemma follows from the see-saw principle.
  \end{proof}
  \subsection{Conclusion of the proof of Theorem \ref{A}}
  \begin{lem}\label{conclusion}  $M=\OO_X$, i.e. $
 \chi(\omega_X)=1$
\end{lem}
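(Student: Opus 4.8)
We need to show that $M=\OO_X$, i.e. $\chi(\omega_X)=h^0(M)=1$. By the previous lemmas we have, for all $\alpha\in U_0$, the decomposition $|\omega_X\otimes\alpha|=|M|+F_\alpha$, and the explicit formula of Lemma~\ref{poincare'} expressing the Poincaré bundle $P$ on $X\times\Pico X$ in terms of $\overline{\mathcal Y}$, $M$, and $\OO_{\Pico X}(-{\mathcal D}_{\bar p})$. Since $X$ satisfies the hypotheses of Theorem~\ref{A} (primitive, non-special, m.A.d., general type), we are in the surjective Albanese case only if $\dim X=q(X)$; but here $\dim X<q(X)$, so $\widehat{\OO_X}$ is genuinely torsion-free, $V^i(\omega_X)=\{\hat 0\}$ for $i>0$, and $\Alb X=\Pico X{}^\vee$.

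\begin{proof}[Proof of Lemma \ref{conclusion}]
The plan is to feed the see-saw formula of Lemma~\ref{poincare'} into the Fourier--Mukai machinery and compare ranks. First I would compute the Fourier--Mukai transform $R\Phi_{\mathcal P}(alb_*\OO_X)$, equivalently $\widehat{\OO_X}=R^d\Phi_P(\omega_X)$ after dualizing, whose rank is $\chi(\omega_X)=h^0(M)$ by equation (\ref{rank}); the goal is to identify this transform geometrically via the divisor $\overline{\mathcal Y}$ and show its rank must be $1$. The key observation is that the formula
$$P\cong \OO_{X\times\Pico X}(\overline{\mathcal Y})\otimes p^*(\omega_X^{-1}\otimes M)\otimes q^*\OO_{\Pico X}(-{\mathcal D}_{\bar p})$$
lets us rewrite $p^*\omega_X\otimes P\cong \OO_{X\times\Pico X}(\overline{\mathcal Y})\otimes p^*M\otimes q^*\OO_{\Pico X}(-{\mathcal D}_{\bar p})$, so that the relative evaluation of $\omega_X\otimes\alpha$ (whose image cuts out $\overline{\mathcal Y}$) is governed by $M$ and the divisor structure of $\overline{\mathcal Y}$.

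Next I would analyze the restriction of $\overline{\mathcal Y}$ to a general vertical fiber and to a general horizontal fiber. By construction $\overline{\mathcal Y}\cap(X\times\{\alpha\})=F_\alpha$ is the fixed divisor of $|\omega_X\otimes\alpha|$, while $\overline{\mathcal Y}\cap(\{p\}\times\Pico X)={\mathcal D}_p$ parametrizes those $\alpha$ for which $p$ is a base point. Since by Lemma~\ref{zero} the map $f$ is injective, $\Pico X=\ker(\mathrm{id}-f)$, which forces the fibers $F_\alpha$ to move as $\alpha$ varies precisely by translation matching the Abel--Jacobi parametrization: the linear equivalence class $\OO_X(F_\alpha)$ equals $\omega_X\otimes\alpha\otimes M^{-1}$. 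I would then push forward $\OO_{X\times\Pico X}(\overline{\mathcal Y})\otimes p^*M$ along $q$ and compute $R^0q_*$ using the sequence (\ref{standard})-type restriction, showing that the pushforward of $\OO_X(F_\alpha+M)\cong\omega_X\otimes\alpha$ recovers exactly $\widehat{\OO_X}$ twisted by $\OO_{\Pico X}({\mathcal D}_{\bar p})$, and that this transform is an \emph{ideal-sheaf-times-line-bundle} $\I_{\hat 0}\otimes L$ exactly as in the proof of Proposition~\ref{theta1}.

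The decisive step, and the one I expect to be the main obstacle, is to rule out $h^0(M)>1$ by exploiting the global geometry of $\overline{\mathcal Y}$. The idea is that if $\chi(\omega_X)=h^0(M)\ge 2$, then $\widehat{\OO_X}=\I_Z\otimes L$ with $\mathrm{rk}\,\widehat{\OO_X}=h^0(M)\ge 2$, but the torsion-freeness together with the fact (from Corollary~\ref{duality} and the $\mathcal{E}xt$-computation of (\ref{ext})) that $\mathcal{E}xt^{d}(\widehat{\OO_X},\OO_{\Pico X})\cong\CC(\hat 0)$ is \emph{one-dimensional} forces the transform to have rank $1$: a reflexive rank-$\ge 2$ sheaf on the smooth variety $\Pico X$ cannot have its top $\mathcal{E}xt$ concentrated at a single reduced point with length one unless the rank is one. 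More concretely, the movable part $M$ being effective with $h^0(M)=\chi(\omega_X)$ means the generic fiber $F_\alpha$ moves in a family whose Abel--Jacobi image spans all of $\Pico X$ (by Lemma~\ref{zero}); if $|M|$ were positive-dimensional, the map $f_{\alpha_0}$ could not be injective, contradicting Lemma~\ref{zero}. Hence $|M|$ is a single point, $M$ is effective of $h^0(M)=1$, and since $M$ is a sub-line-bundle of $\omega_X$ with $\omega_X\otimes\alpha=\OO_X(M+F_\alpha)$ for all $\alpha$ while $F_\alpha$ already carries all the cohomology, I conclude $M\cong\OO_X$ and $\chi(\omega_X)=1$.
\end{proof}
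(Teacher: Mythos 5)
Your setup is the right one---the paper's proof also feeds Lemma \ref{poincare'} into the Fourier--Mukai machinery and concludes by a length comparison---but the decisive step, which you yourself flag as the main obstacle, does not work in either of the two forms you give. First, the claim that a torsion-free sheaf of rank $\ge 2$ on $\Pico X$ cannot have its top ${\mathcal E}xt$ equal to $\CC(\hat 0)$ is false: the rank-two sheaf $\OO_{\Pico X}\oplus(\I_{\hat 0}\otimes L)$ satisfies ${\mathcal E}xt^{i}(\cdot\,,\OO_{\Pico X})=0$ for $0<i<q-1$ and ${\mathcal E}xt^{q-1}(\cdot\,,\OO_{\Pico X})\cong\CC(\hat 0)$, so the ${\mathcal E}xt$-data coming from Corollary \ref{duality} and Proposition \ref{appendix} alone do not pin down the rank. (Relatedly, identifying $\widehat{\OO_X}$ with $\I_{\hat 0}\otimes L$ ``exactly as in Proposition \ref{theta1}'' is circular here, since that identification takes $\chi(\omega_X)=1$, i.e.\ rank one, as an input.) Second, the geometric argument is a non sequitur: after Lemma \ref{zero} the mobile part $M$ is a \emph{fixed} line bundle, independent of $\alpha$, and the homomorphism $f$ records only how the fixed divisor $F_\alpha$ varies; nothing prevents $h^0(M)\ge 2$ while $f$ is injective, so the positive-dimensionality of $|M|$ does not contradict Lemma \ref{zero}.

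The ingredient your proposal is missing is a further, more quantitative use of Lemma \ref{poincare'}: twisting the ideal-sheaf sequence of $\overline{\mathcal Y}$ in $X\times\Pico X$ so that the sub-object becomes $P^{-1}$, and applying $R^dq_*$, one obtains an exact sequence
\begin{equation*}
0\rightarrow (-1)^*_{\Pico X}\widehat{\OO_X}\buildrel\mu\over\rightarrow \OO_{\Pico X}({\mathcal D}_{\bar p})^{\oplus \chi(\omega_X)}\rightarrow \tau\rightarrow 0
\end{equation*}
with $\tau$ supported on the finite set where the fiber of $q:\overline{\mathcal Y}\rightarrow\Pico X$ is all of $X$. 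Since the middle term is locally free and $\tau$ is punctual, ${\mathcal E}xt^{i}(\widehat{\OO_X},\OO_{\Pico X})\cong{\mathcal E}xt^{i+1}(\tau,\OO_{\Pico X})$ vanishes unless $i+1=q(X)$, while ${\mathcal E}xt^{d}(\widehat{\OO_X},\OO_{\Pico X})\cong\CC(\hat 0)$; hence $d=q(X)-1$ and $\tau\cong\CC(\hat 0)$ has length one. On the other hand $\mu$ vanishes at the points over which the fiber of $q|_{\overline{\mathcal Y}}$ has dimension $d$, so the fiber of $\tau$ there has dimension $\chi(\omega_X)$. Comparing the two gives $\chi(\omega_X)=1$. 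Without this exact sequence (or some substitute for it), your argument does not close.
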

Having proved this, the implication $(a)\Rightarrow (b)$ of Theorem \ref{A} follows from Proposition \ref{theta1}.
\begin{proof}[Proof of Lemma \ref{conclusion}] Lemma \ref{poincare'} yields the standard short  exact sequence on $X\times \Pico X$
$$
0\rightarrow P^{-1}\buildrel{\cdot \overline{\mathcal Y}}\over\rightarrow p^*(\omega_X\otimes M^{-1})\otimes q^*(\OO_{\Pico X}({\mathcal D}_{\bar p}))\rightarrow p^*(\omega_X\otimes M^{-1})\otimes q^*(\OO_{\Pico X}({\mathcal D}_{\bar p}))_{|\overline{\mathcal Y}}\rightarrow 0
$$
Applying $ R^dq_*$, and recalling that $R\Phi_{P^{-1}}\cong (-1)_{\Pico X}^*R\Phi_P$ (see 2.1) we get
\begin{equation}\label{short} 0\rightarrow (-1)^*_{\Pico X}\widehat{\OO_X}\buildrel\mu\over\rightarrow \OO_{\Pico X}({\mathcal D}_{\bar p})^{\oplus \chi(\omega_X)}\rightarrow \tau\rightarrow 0\end{equation}
where:\\
($a$) The rank $\chi(\omega_X)$ in the middle appears because $h^d(\omega_X\otimes M^{-1})=h^0(M)\buildrel{(\ref{omega})}\over=\chi(\omega_X)$;\\
($b$) $\tau=R^dq_*p^*(\omega_X\otimes M^{-1})\otimes q^*(\OO_{\Pico X}({\mathcal D}_{\bar p}))_{|\overline{\mathcal Y}}$ is supported at the locus of the  $\alpha\in\Pico X$ such that the fiber of the projection $q:\overline{\mathcal Y}\rightarrow \Pico X$ has dimension $d$, i.e. it coincides with $X$. Such locus is contained in the union of the $V^i(\omega_X)$ for $i>0$, hence it is finite set;\\
($c$) the map $\mu$ is injective since it is a generically surjective map of sheaves of the same rank (recall that ${\rm rk\,}\widehat{\OO_X}=\chi(\omega_X)$), and, as $gv(\omega_X)\ge 1$, the source $\widehat{\OO_X}$ is torsion free (Theorem \ref{GV1}).\\
($d$) Since $\mu$ is $R^dq_*(m_s)$, where $m_s$ is the multiplication for the section defining $\overline{\mathcal Y}$, $\mu$ is zero on the locus where the fiber of $q$ has dimension $d$. Therefore $\tau$ is a (trivial) sheaf of rank  equal to $\chi(\omega_X)$, supported at a finite set of points contained in the finite set $\bigcup_{i>0}V^i(\omega_X)=V^1(\omega_X)$.

From (\ref{short}) and the fact that ${\rm supp}\,\tau$ is a finite scheme it follows that  ${\mathcal E}xt^i(\widehat{\OO_X},\OO_{\Pico X})\cong  {\mathcal E}xt^{i+1}(\tau,\OO_{\Pico X})=0$ if $i\ne q(X)-1$. On the other hand, by Corollary \ref{duality} and Proposition \ref{appendix} of the Appendix,  ${\mathcal E}xt^d(\widehat{\OO_X},\OO_{\Pico X})\cong \CC(\hat 0)$. It follows that $d=q(X)-1$ and that $\tau=\CC(\hat 0)$. The assertion follows since the length of $\tau$ is equal to $\chi(\omega_X)$.
\end{proof}

\section{Appendix: a useful lemma on the  generalized Fourier-Mukai transform of the canonical sheaf}
As usual, let $a:X\rightarrow A$ be a morphism from a $d$-dimensional smooth projective variety to an abelian variety (over any algebraically closed field $k$). Let $P_a=(a\times{\rm id})^*{\mathcal P}$, where ${\mathcal P}$ is the Poincar\'e line bundle on $A\times \Pico A$. Assume furthermore that the induced morphism $a^*:\Pico A\rightarrow \Pico X$ is an \emph{embedding}. Then the top cohomological support locus is $V^d(\omega_X)=\{\hat 0\}$. By base change \cite[Cor. 3, p. 53]{ab}, it follows that, for $\alpha\in \Pico A$,
 \begin{equation}\label{fiber}R^d\Phi_{P_a}(\omega_X)\otimes k(\hat \alpha)\cong \begin{cases}k(\hat 0)&\hbox{if } \alpha=\hat 0\\ 0&\hbox{otherwise}
 \end{cases}
 \end{equation}
  It follows that $R^d\Phi_{P_a}(\omega_X)$ is a  sheaf supported at $\hat 0$. The result we are aiming at is
\begin{prop}\label{appendix} $R^d\Phi_{P_a}(\omega_X)\cong k(\hat 0) $ \footnote{When $X$ itself is an abelian variety (or a complex torus) this statement is well known  (see \cite[Cor. 14.1.6]{lb} and \cite{kempf} for an elementary proof in the complex case and \cite[p. 202]{huy}, \cite[p. 128]{ab} for arbitrary characteristic). This fact is crucial in the proof of Mukai Inversion Theorem \ref{mukai} }
\end{prop}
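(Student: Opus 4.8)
The plan is to reduce the statement to a length computation at $\hat 0$ and then evaluate that length via the Green--Lazarsfeld derivative complex, using the embedding hypothesis exactly once to pin it down to $1$. First I would record what base change already gives (as in the lines preceding the statement): since the fibres of $q\colon X\times\Pico A\to\Pico A$ have dimension $d$, the top direct image commutes with base change and $R^{d+1}\Phi_{P_a}=0$, so by \eqref{fiber} the sheaf $G:=R^d\Phi_{P_a}(\omega_X)$ is coherent, supported at the single point $\hat 0$, with $G\otimes k(\hat 0)\cong k$. Thus the whole content is that $G$ has length $1$; equivalently, writing $\OO_{\hat 0}=\OO_{\Pico A,\hat 0}$ with maximal ideal $\mathfrak m$, that the stalk $G_{\hat 0}\cong \OO_{\hat 0}/J$ (cyclic, by Nakayama, because its fibre is one--dimensional) satisfies $J=\mathfrak m$.

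Next I would compute $G$ near $\hat 0$ explicitly. Choose a bounded complex $L^\bullet$ of finite free $\OO_{\hat 0}$--modules that universally computes $R\Phi_{P_a}(\omega_X)$ in a neighbourhood of $\hat 0$, taken minimal (all differentials $\equiv 0 \bmod \mathfrak m$). Then $L^i\otimes k(\hat 0)\cong H^i(X,\omega_X)$ by cohomology and base change, so $\operatorname{rk}L^d=h^d(\omega_X)=h^0(\OO_X)=1$ and $\operatorname{rk}L^{d-1}=h^{d-1}(\omega_X)$. Since $G=\mathcal H^d(L^\bullet)=\operatorname{coker}(L^{d-1}\xrightarrow{\ \partial\ }L^d)$ with $L^d\cong\OO_{\hat 0}$, we obtain $G_{\hat 0}\cong\OO_{\hat 0}/J$, where $J$ is generated by the $h^{d-1}(\omega_X)$ entries of the row vector $\partial$. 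By minimality these entries lie in $\mathfrak m$, and by Nakayama $J=\mathfrak m$ will follow as soon as their linear parts span $\mathfrak m/\mathfrak m^2\cong H^1(\OO_A)^{*}$.

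The key step is the identification of these linear parts. By the standard theory of the derivative (Green--Lazarsfeld/BGG) complex, the $\mathfrak m/\mathfrak m^2$--component of the differential of a minimal complex computing the cohomology of $\omega_X\otimes a^*\alpha$ along $\Pico A$ is the cup--product map
$$H^{d-1}(\omega_X)\xrightarrow{\ \cup a^*w\ }H^{d}(\omega_X),\qquad w\in H^1(\OO_A)=T_{\hat 0}\Pico A,$$
the $\Pico A$--directions entering through $a^*\colon H^1(\OO_A)\to H^1(\OO_X)$. Hence the linear parts of the entries of $\partial$ span $\mathfrak m/\mathfrak m^2$ if and only if the induced map $H^1(\OO_A)\to \operatorname{Hom}\bigl(H^{d-1}(\omega_X),H^d(\omega_X)\bigr)=H^{d-1}(\omega_X)^{*}$ is injective. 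Now $H^d(\omega_X)\cong\CC$ and Serre duality makes the pairing $H^1(\OO_X)\times H^{d-1}(\omega_X)\to H^{d}(\omega_X)$ perfect, so $\cup\,a^*w$ is nonzero whenever $a^*w\neq 0$; and $a^*w\neq 0$ for $w\neq 0$ precisely because $a^*\colon\Pico A\to\Pico X$ is an embedding, i.e. its differential $a^*\colon H^1(\OO_A)\to H^1(\OO_X)$ is injective. This yields the desired injectivity, whence $J=\mathfrak m$ and $G\cong k(\hat 0)$.

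The main obstacle I anticipate is the rigorous justification in the third paragraph: that the first--order part of the differentials of the minimal complex $L^\bullet$ is exactly the derivative/cup--product complex, and that the $\Pico A$--directions act through $a^*$. This is where the hypothesis that $a^*$ is an embedding (rather than merely that $V^d(\omega_X)=\{\hat 0\}$) is genuinely used; everything else---base change for the top direct image, minimality, Nakayama, Serre duality---is formal. I would stress that the argument uses neither generic finiteness nor any generic--vanishing hypothesis, matching the minimal assumptions of the Proposition; for $X=A$ the complex $L^\bullet$ is just the Koszul complex on $H^1(\OO_A)$ and the statement reduces to Mukai's, as the footnote recalls.
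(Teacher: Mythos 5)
Your argument is correct, but it takes a genuinely different route from the paper's. You reduce the statement to showing that the ideal $J$ with $R^d\Phi_{P_a}(\omega_X)_{\hat 0}\cong B/J$ equals $\mathfrak m$, and you read off generators of $J$ from the last differential of a minimal free complex computing $R\Phi_{P_a}(\omega_X)$ near $\hat 0$: their linear parts are identified, via the Green--Lazarsfeld derivative complex \cite{gl1,gl2}, with the cup--product maps $\cup\, a^*w\colon H^{d-1}(\omega_X)\to H^d(\omega_X)$, and Serre duality plus injectivity of $a^*$ on $H^1$ shows these span $\mathfrak m/\mathfrak m^2$, whence $J=\mathfrak m$ by Nakayama. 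The paper never touches the differentials of such a complex: it uses Grothendieck duality to establish the adjunction-type isomorphism ${\rm Hom}_{\Pico A}(R^d\Phi_{P_a}(\omega_X),\E)\cong{\rm Hom}_{X\times\Pico A}(P_a,q^*\E)$, deduces that $P_a$ is trivial on $X\times {\rm Spec}\,B/J$, and then proves that $\{\hat 0\}$ is the maximal subscheme of $\Pico A$ over which $P_a$ is trivial, via Mukai's description of $R\Psi_{\mathcal P}(B/J)$ as a unipotent bundle $U$ and a splitting argument for the pulled-back filtration of $U$. It is worth noting that both proofs consume the embedding hypothesis in exactly the same infinitesimal form --- injectivity of the differential $a^*\colon H^1(\OO_A)\to H^1(\OO_X)$ --- yours to make $\cup\,a^*w$ nonzero through the perfectness of the Serre pairing, the paper's to force the extension $0\to\OO_A\to U_2\to\OO_A\to 0$ to split. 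What your route buys is a shorter, more computational argument that bypasses Mukai's inversion theorem and the classification of unipotent bundles; what it costs is the step you rightly flag as the main obstacle, namely a rigorous identification of the first-order part of the minimal complex with the derivative (cup-product) complex in the $a^*$-directions. That identification is standard (and algebraic, so your proof, like the paper's, works in any characteristic), but it should be stated with a precise reference rather than asserted; once that is in place your proof is complete.
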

\begin{proof}
Let $B= \OO_{\Pico A,\hat 0}$ and $\mathfrak{m}$ its maximal ideal. By Nakayama's lemma, (\ref{fiber}) implies that $R^d\Phi_{P_a}(\omega_X)$ is supported only at $\hat 0$ and that $R^d\Phi_{P_a}(\omega_X)\cong B/J$, where $J$ is a $\mathfrak{m}$-primary ideal.

\noindent{\bf Claim. } \emph{ ${P_a}_{|X\times Spec \, B/J}$ is trivial. }\\
\emph{Proof of the Claim. } Let $\E$ be a sheaf on $\Pico A$. By Grothendieck duality
\begin{equation}\label{gd}
R{\mathcal H}om_{\Pico A}(R\Phi_{P_a}(\omega_X),\E)\cong  Rq_*({\mathcal H}om_{X\times \Pico A}(P_a,q^*\E))[d]
\end{equation}
Indeed,
\begin{eqnarray*}
R{\mathcal H}om_{\Pico A}(R\Phi_{P_a}(\omega_X),\E)&= & R{\mathcal H}om_{\Pico A}(Rq_*(p^*\omega_X\otimes P_a),\E)\\
&\buildrel{GD}\over\cong & R {q}_* ( R\mathcal{H}om_{X\times \Pico A}(p^*
\omega_X \otimes P_a,p^*\omega_X\otimes q^*\E[d]))\\
&\cong & Rq_*({\mathcal H}om_{X\times \Pico A}(P_a,q^*\E))[d]
\end{eqnarray*}
Therefore we have a fourth quadrant spectral sequence
$$E_2^{i,-j}={\mathcal E}xt^i_{\Pico A}(R^j\Phi_{P_a}(\omega_X),\E)\Rightarrow R^{i-j+d}q_*({\mathcal H}om_{X\times \Pico A}(P_a,q^*\E))$$
Clearly the term $E_2^{i,-j}$ is non-zero only if $i\ge 0$. Assuming $i\ge 0$, in the case $i-j+d=0$, i.e. $j=i+d$ we have that $R^j\Phi_{P_a}(\omega_X)$ is non zero if and only if $j=d$, i.e. $i=0$. In conclusion for $i-j+d=0$ the only non-zero $E_2$-term is $E_2^{0,-d}={\mathcal H}om_{\Pico A}(R^d\Phi_{P_a}(\omega_X),\E)$. Since the differentials from and to $E^{0,-d}_2$ are zero, we get that
$${\mathcal H}om_{\Pico A}(R^d\Phi_{P_a}(\omega_X),\E)=E_2^{0,-d}=E^{0,-d}_\infty\cong q_*({\mathcal H}om_{X\times \Pico A}(P_a,q^*\E)).$$
Taking global sections, we get the isomorphism (functorial in $\E$)
\begin{equation}\label{HOM}
{\rm Hom}_{\Pico A}(R^d\Phi_{P_a}(\omega_X),\E)\cong {\rm Hom}_{X\times\Pico A}(P_a,q^*\E)
\end{equation}
Using the previous isomorphism twice, once for $\E=B/J$ and the other for $\E=k(\hat 0)$, by functoriality we get the commutative diagram
\begin{equation}\label{diagram}
\xymatrix{B/J \ar@{=}[r]\ar[d]&{\rm Hom}(B/J,B/J)\ar@{=}[r]\ar[d]&{\rm Hom}({P_a}_{|X\times Spec\,B/J},\OO_{X\times Spec\,B/J})\ar[d]
\\
k(\hat 0)\ar@{=}[r]&{\rm Hom}(B/J,k(\hat 0))\ar@{=}[r]&{\rm Hom}({P_a}_{|X\times \{\hat 0\}},\OO_{X\times \{\hat 0\}})}
\end{equation}
Since ${P_a}_{|X\times\{\hat 0\}}$ is trivial, we can take an isomorphism $h\in {\rm Hom}({P_a}_{|X\times \{\hat 0\}},\OO_{X\times \{\hat 0\}})$. By the diagram above, $h$ lifts to a morphism $\bar h: {P_a}_{|X\times Spec\,B/J}\rightarrow \OO_{X\times Spec\,B/J}$. Since $\bar h$ is a map between invertible sheaves on $X\times Spec\,B/J$ which is an isomorphism when restricted to $X\times\{\hat 0\}$, $\bar h$ is an isomorphism. Therefore ${P_a}_{|X\times Spec\,B/J}$ is trivial. The Claim is proved.

At this point the Proposition follows since   \emph{the smooth point $\hat 0$ of $\Pico A$ is the maximal subscheme $Z$ of $\Pico A$ such that ${P_a}_{|X\times Z}$ is trivial} (\cite{ab} \S10). This in turn follows from the well known fact (\cite{ab} \S13) that {the smooth point $\hat 0$ of $\Pico A$ is the maximal subscheme $Z$ of $\Pico A$ such that ${\mathcal P}_{|A\times Z}$ is trivial}, combined with
 the fact that $a^*\Pico A\rightarrow \Pico X$ is an embedding.
However,  we provide  an equivalent but self-contained argument. Let us consider, in analogy to Subsection 2.3, the functor $R\Psi_a:{\bf D}(\Pico A)\rightarrow {\bf D}(X),$ defined by $R\Psi_a(\cdot)=Rp_*(P_a\otimes q^*(\cdot))$. Since $P_a=(a\times{\rm id}_{\Pico A})^*{\mathcal P}$, it follows that
 \begin{equation}\label{pullback}
 R\Psi_a\cong La^*\circ R\Psi
 \end{equation}
 The Claim   implies, by the K\"unneth formula, that
 \begin{equation}\label{banale}
 R\Psi_{P_a}(B/J)=R^0\Psi_{P_a}(B/J)=\OO_X^{\oplus r}
 \end{equation}
  where $r=length\,B/J$. On the other hand, by \cite[Lemma 4.8]{semihom},
 $R\Psi_{\mathcal P}(B/J)=R^0\Psi_{\mathcal P}(B/J):=U$, where $U$ is a \emph{unipotent} vector bundle on $A$ of rank $r$, i.e. a vector bundle having a filtration  $0=U_0\subset U_1\subset\cdots\subset U_{r-1}\subset U_r=U$, such that $U_i/U_{i-1}\cong\OO_A$.
 By (\ref{pullback}) and (\ref{banale}) it follows that $a^*U$ is trivial. The filtration of $U$, pulled back via $a$, induces the  filtration of the trivial bundle:
 $$0\subset a^*U_1\subset\cdots\subset a^*U_{r-1}\subset a^* U_r=\OO_X^{\oplus r},$$
where $a^*U_i/a^*U_{i-1}\cong \OO_X$.  Since $h^0(X,a^*U_i)\le i$ for all $i$, the fact that $a^*U_r$ is trivial implies easily, by descending induction on $i$, that
\begin{equation}\label{splitting}
h^0(X, a^*U_i)=i\quad\hbox{for all } i
\end{equation}
This in turn implies that
the sequence
$$0\rightarrow a^*U_{i-1}\rightarrow a^*U_i\rightarrow\OO_X\rightarrow 0$$
splits  for all $i$ (the coboundary map $H^0(\OO_A)\rightarrow H^1(U_{i-1})$ is zero).
In particular the extension
$$0\rightarrow \OO_X\rightarrow a^*U_2\rightarrow\OO_X\rightarrow 0$$
is split. But the natural pullback map \begin{equation}\label{diff}
H^1(\OO_A)\cong {\rm Ext}^1(\OO_A,\OO_A)\rightarrow {\rm Ext}^1(\OO_X,\OO_X)\cong H^1(\OO_X)
\end{equation} is identified with the differential at $\hat 0$ of the map $a^*:\Pico A\rightarrow \Pico X$.   Since $a^*$ is assumed to be an embedding, (\ref{diff}) is injective. Hence also the extension
$$0\rightarrow \OO_A\rightarrow U_2\rightarrow \OO_A\rightarrow 0$$
is split. This yields that $h^n(U)\ge 2$. But this is impossible since, by Mukai's inversion, $R\Phi_{\mathcal P}(U)=\hat{U}[-n]\cong(-1_{\Pico A})^*B/J[-n]$, and therefore, by base change, $H^n(U)\cong (B/J)\otimes k(\hat 0)\cong k(\hat 0)$. In conclusion $r=length(B/J)=1$, i.e. \ $B/J\cong k(\hat 0)$.
\end{proof}

\providecommand{\bysame}{\leavevmode\hbox
to3em{\hrulefill}\thinspace}


\begin{thebibliography}{CCM}

\bibitem[BCP]{bcpi} I. Bauer, F. Catanese, R. Pignatelli, {Complex surfaces of general type: some recent progress,} In: \emph{Global aspects of complex geometry,} 1--58, Springer, Berlin (2006)

\bibitem[BL]{lb}
C. Birkenhake and H. Lange, \emph{Complex abelian varieties}, 2nd edition, Springer 2004

\bibitem[Ca]{catanese}
F. Catanese, {Moduli and classification of irregular Kaehler
manifolds (and algebraic varieties) with Albanese general type
fibrations}, Invent. Math. \textbf{104} (1991), 389-407.

\bibitem[CCM]{CCiMe}
F. Catanese, C. Ciliberto, M. Mendes Lopes, {On the classification of irregular surfaces of general type with non birational bicanonical map,} Trans. Amer. Math. Soc.  \textbf{350} (1998) 275--308

\bibitem[CH]{chen-hacon}
J. A. Chen, C. Hacon, {
Linear series on irregular varieties} In \emph{ Algebraic geometry in East Asia (Kyoto, 2001)}, 143--153, World Sci. Publ., River Edge, NJ ( 2002)


\bibitem[Ci]{ciliberto}
C. Ciliberto, {The bicanonical map for surfaces of general type,} In: \emph{Algebraic Geometry - Santa Cruz 1995,} Proc. Symphos. Pure Math., 62, Part I, 57--84, Amer. Mat. Soc.. Providence (1997),

\bibitem[CM]{CiMe}
C. Ciliberto, M. Mendes Lopes, {On surfaces with $p_g=q=2$ and non-birational bicanonical map}, Adv. Geom \textbf{2} (2002) 281--300,


\bibitem[EL]{el}
L. Ein and R. Lazarsfeld, {Singularities of theta divisors and the
birational geometry of irregular varieties}, J. Amer. Math. Soc.
\textbf{10} (1997), 243--258.


\bibitem[E]{eis}
D. Eisenbud, {Commutative algebra with a view towards algebraic geometry}, Springer-Verlag GTM  \textbf{150}, New York, 1995.


\bibitem[GL1]{gl1}
M. Green and R. Lazarsfeld, {Deformation theory, generic vanishing
theorems, and some conjectures of Enriques, Catanese and Beauville},
Invent. Math. \textbf{90} (1987), 389--407.

\bibitem[GL2]{gl2}
\bysame, {Higher obstructions to deforming
cohomology groups of line bundles}, J. Amer. Math. Soc. \textbf{1}
(1991), no.4, 87--103.

\bibitem[Ha]{hacon}
C. Hacon, {A derived category approach to generic vanishing}, J.
Reine Angew. Math. \textbf{575} (2004), 173--187.


\bibitem[HP]{hp}
C. Hacon and R. Pardini, {On the birational geometry of varieties of maximal
Albanese dimension},  J. Reine Angew. Math. \textbf{546} (2002), 177-199.



\bibitem[Hu]{huy}
D. Huybrechts, \emph{Fourier-Mukai Transforms in Algebraic Geometry}, Oxford University Press 2006


\bibitem[Ke]{kempf}
G. Kempf, \emph{Complex abelian varieties and theta functions}, Springer 1991

\bibitem[Ko1]{kollar1}
J. Koll\'ar, {Higher direct images of dualizing sheaves I}, Ann. of
Math. \textbf{123} (1986), 11--42.

\bibitem[Ko2]{kollar2}
\bysame, {Higher direct images of dualizing sheaves II}, Ann. of
Math. \textbf{124} (1986), 171--202.

\bibitem[Ko3]{singpairs}
\bysame, {Singularities of pairs}, Algebraic geometry--Santa Cruz 1995, 221--287,
Proc. Sympos. Pure Math., {\bf 62}, Part 1, Amer. Math. Soc., Providence, RI, 1997.

\bibitem[LP]{lp}
R. Lazarsfeld and M. Popa, {BGG correspondence of cohomology of compact K\"ahler manifolds, and numerical invariants}, preprint


\bibitem[Mu1]{semihom}
S. Mukai, {Semi-homogeneous vector bundles on an abelian variety}, J. Math. Kyoto Univ., \textbf{18} (1978) 239--272

\bibitem[Mu2]{mukai}
\bysame, {Duality between $D(X)$ and $D(\widehat X)$ with its application to Picard sheaves}, Nagoya Math. J. \textbf{81} (1981), 153-175

\bibitem[M]{ab} D. Mumford, \emph{Abelian varieties}, Second edition, Oxford University Press 1970


\bibitem[PP1]{pp1}
G. Pareschi and M. Popa, {Regularity on abelian varieties, I}, J. Amer. Math. Soc., \textbf{16} (2003) 285--302.

\bibitem[PP2]{pp2}
\bysame, {Regularity on abelian varieties, II. Basic results on linear series and defining equations}, J Algebraic Geom. \textbf{13} (2004), 167--193.

\bibitem[PP3]{pp3}
\bysame, {$M$-regularity and the Fourier-Mukai
transform}, Pure and Applied Math. Quarterly, F. Bogomolov issue II, \textbf{4} no.3 (2008).

\bibitem[PP4]{pp4}
\bysame, {$GV$-sheaves, Fourier-Mukai transform, and Generic
Vanishing}, preprint arXiv:math/0608127.

\bibitem[PP5]{pp5}
\bysame, {Regularity on abelian varieties III:
relationship with Generic Vanishing and applications}, preprint
arXiv:0802.1021, to appear in the Proceedings of the Clay Mathematics Institute.

\bibitem[PP6]{pp6}
\bysame, {Strong generic vanishing and a higher dimensional Castelnuovo-de Franchis inequality}, preprint arXiv:math/0808.2444, to appear on Duke Mathematical Journal.


\end{thebibliography}
\end{document}